\renewcommand{\geq}{\geqslant}
\renewcommand{\leq}{\leqslant}
\renewcommand{\ge}{\geqslant}
\renewcommand{\le}{\leqslant}
\newtheorem{Theorem}{Theorem}[section]
\newtheorem{Lemma}[Theorem]{Lemma}
\newtheorem{Cor}[Theorem]{Corollary}
\newtheorem{Prop}[Theorem]{Proposition}
\newtheorem{Rem}[Theorem]{Remark}
\def\cA{\mathcal{A}}
\def\cB{\mathcal{B}}
\def\cC{\mathcal{C}}
\def\cS{\mathcal{S}}
\def\cU{\mathcal{U}}
\def\cV{\mathcal{V}}
\def\Erw{\mathbb{E}}
\def\N{\mathbb{N}}
\def\Prob{\mathbb{P}} 
\def\R{\mathbb{R}}
\def\Z{\mathbb{Z}}
\def\bC{\textit{\bfseries C}}
\def\vth{\vartheta}
\def\1{\vec{1}}
\def\3{{\ss}}
\def\eqdist{\stackrel{d}{=}}
\def\ovl{\overline}
\def\tp{{\widehat p}}
\def\tq{{\widehat q}}
\def\tX{{\widehat X}}
\def\tY{{\widehat Y}}
\def\qed{\hfill$\square$}
\def\lpa{\left(}
\def\rpa{\right)}
\def\lva{\left\lvert}
\def\rva{\right\rvert}
\def\lacc{\left\{}
\def\racc{\right\}}
\def\lcr{\left[}
\def\rcr{\right]}
\def\hJ{{\widehat J}}
\def\wJ{{\widetilde J}}
\def\hT{{\widehat T}}
\def\wT{{\widetilde T}}
\def\ptha{\partial_{\theta}}
\def\e{{\rm e}}
\def\wtil{\widetilde}
\def\fps{\equiv}
\begin{document}

\title*{Persistence for a class of order-one autoregressive processes and Mallows-Riordan polynomials}
\titlerunning{Persistence for a class of AR(1) processes}
\author{Gerold Alsmeyer, Alin Bostan, Kilian Raschel and Thomas Simon}
\authorrunning{G.~Alsmeyer, A.~Bostan, K.~Raschel and T.~Simon}
\institute{Gerold Alsmeyer \at Institute of Mathematical Stochastics, Department
of Mathematics and Computer Science, University of M\"unster,
Orl\'eans-Ring 10, 48149 M\"unster, Germany.
\email{gerolda@math.uni-muenster.de} \\
Alin Bostan \at Inria, Universit\'e Paris-Saclay, 1 rue Honor\'e d'Estienne d'Orves, 91120 Palaiseau, France.
\email{alin.bostan@inria.fr}
\\Kilian Raschel \at CNRS, Laboratoire Angevin de Recherche en Math\'ematiques, Universit\'e d'Angers, 2 boulevard Lavoisier, 49045 Angers, France. \email{raschel@math.cnrs.fr}
\\Thomas Simon \at Laboratoire Paul Painlev\'e,
Universit\'e de Lille, Cit\'e Scientifique, 
59655 Villeneuve d'Ascq, France. \email{thomas.simon@univ-lille.fr}
\\\\
{This project was partially funded by the Deutsche Forschungsgemeinschaft (DFG) under Germany's Excellence Strategy EXC 2044--390685587 (Gerold Alsmeyer) and by the European Research Council (ERC) under the European Union's Horizon 2020 research and innovation programme under the Grant Agreement No 759702 (Kilian Raschel). Alin Bostan and Kilian Raschel were also supported in part by DeRerumNatura ANR-19-CE40-0018.}}

\maketitle

\date{\today}

\abstract{We establish exact formulae for the persistence probabilities of an AR(1) sequence with symmetric uniform innovations in terms of certain families of polynomials, most notably a family introduced by Mallows and Riordan as enumerators of finite labeled trees when ordered by inversions. The connection of these polynomials with the volumes of certain polytopes is also discussed. Two further results provide factorizations of general AR(1) models, one for negative drifts with continuous innovations, and one for positive drifts with continuous and symmetric innovations. The second factorization extends a classical universal formula of Sparre Andersen for symmetric random walks. Our results also lead to explicit asymptotic estimates for the persistence probabilities.}
\bigskip

{\noindent \textbf{AMS 2020 subject classifications} Primary 05C31; 60J05; Secondary 11B37; 30C15; 60F99 } 

{\noindent \textbf{Keywords} Autoregressive model; deformed exponential function; first passage time; persistence probability; Mallows-Riordan polynomial; Tutte polytope; zigzag number}

\section{Introduction and main results}

\label{sec:introduction}

Let $X$ be a real Gaussian random variable with mean $\mu$ and variance $\sigma^{2}$. Then $Y=e^X$ has a log-normal distribution with integer moments $\Erw [Y^{n}] =e^{n\mu+n^{2}\sigma^{2}/2}$ for all $n\ge 1$ and cumulant generating function
\begin{equation}
\label{LogNom}
\log \Erw [e^{tY}]\; \fps\; \log\lpa\sum_{n\geq 0} e^{n\mu+n^{2}\sigma^{2}/2}\,\frac{t^{n}}{n!}\rpa\; \fps\; \log\lpa\sum_{n\geq0}\theta^{n(n-1)/2}\,\frac{z^{n}}{n!}\rpa,
\end{equation}
where $\theta=e^{\sigma^{2}}$ and $z=te^{\mu+\sigma^{2}/2}$. However, being divergent except for the degenerate case $\sigma^{2}=0$, the above series are considered as formal power series only, and the notation $\fps$ is used here and throughout to express identity between two such series. 

It was observed by Mallows and Riordan in \cite{MaRi68} that the right-hand side of \eqref{LogNom} is the exponential generating function of a family of polynomials with integer coefficients. More precisely, Eq.\,(2) in \cite{MaRi68} asserts that
\begin{equation}
\label{id:MR}
\log \left( \sum_{n\ge 0} \theta^{n(n-1)/2}\, \frac{z^{n}}{n!}\right)\; \fps \;\sum_{n\ge 1}\, (\theta-1)^{n-1} J_{n}(\theta)\, \frac{z^{n}}{n!}
\end{equation}
where $J_{n}(\theta)\in \Z[X]$ is a polynomial of degree $(n-1)(n-2)/2$ with positive coefficients and leading coefficient 1. For $n=1,\ldots,6$, one finds that
\begin{align*}
   J_{1}(\theta) &\; =\; 1,\\
   J_{2}(\theta) &\; =\; 1,\\
   J_{3}(\theta) &\; =\; 2\, +\,\theta,\\ 
   J_{4}(\theta) &\; =\; 6\, +\, 6\theta\, +\, 3\theta^{2}\, +\, \theta^3,\\
   J_{5}(\theta) &\; =\; 24\, +\, 36\theta\, +\, 30\theta^{2}\, +\,20\theta^3\,+\,10\theta^4\,+4\theta^5\,+\,\theta^6,\\
   J_{6}(\theta) &\; = \; 120\, +\, 240\theta\, +\, 270\theta^{2}\, +\, 240\theta^3\, +\, 180\,\theta^4\,+\,120\theta^{5}\, +\, 70\theta^{6} \\ 
& \qquad\qquad\qquad\quad\, +\, 35\theta^7\,+\, 15\theta^8\, +\,5\theta^9\, +\, \theta^{10}.
\end{align*} 
The main result of \cite{MaRi68} is that, for each $n\ge 1$, $J_{n}(\theta)$ equals the enumerator of trees with $n$ labeled points by number of inversions, when inversions are counted on each branch and ordered away from the root, which receives label 1. In particular, one has $J_{n}(1) = n^{n-2}$ for all $n\ge 1$ by Cayley's formula. This combinatorial significance leads to a recursive formula for these polynomials, namely 
\begin{equation}
\label{recMR}
J_{n+2}(\theta)\; =\; \sum_{i=0}^{n} \binom{n}{i} (1 + \theta + \cdots + \theta^{i})\, J_{i+1}(\theta)\, J_{n+1-i} (\theta)
\end{equation}
for every $n\ge 0$, see Formula (1) in \cite{Kreweras80}. Equivalently, one has   
\begin{equation}
\label{id:Mathar}
\sum_{n\ge 0} J_{n+1}(\theta)\, \frac{z^{n}}{n!} \; \fps\; \exp\lcr \sum_{n\ge 1} J_{n}(\theta)\, (1+\theta+\cdots + \theta^{n-1})\, \frac{z^{n}}{n!}\rcr
\end{equation}
by Cauchy's product, integration and identifying coefficients, see Formula (5) in \cite{MaRi68}. Mallows-Riordan polynomials, sometimes also called inversion polynomials in the literature, appear in many other counting problems, see \cite{FSS04} and the references therein. For example, it was shown in \cite{GW79} that $\theta^{n-1} J_{n}(\theta +1)$ is the enumerator of connected labeled $n$-vertex graphs by number of edges, thus $J_{n}(2)$ is the total number of these graphs. Mallows-Riordan polynomials are also an instance of Tutte's bivariate dichromatic polynomials in \cite{Tutte67} with one variable fixed, a topic we will discuss in some more detail in Paragraph~\ref{SoliTutti}. Let us finally mention that several conjectures on further combinatorial aspects of Mallows-Riordan polynomials are stated by Sokal in \cite{Sokal14, Sokal09}.\\

The main purpose of this paper is to provide a probabilistic interpretation of Mallows-Riordan polynomials that is not only quite different from the above connection with the log-normal distribution, but in fact also rather unexpected. To be more specific, consider a real autoregressive sequence of order one, defined by
\begin{equation}
\label{eq:AR(1)}
    Y_{0}\,=\,0\qquad\text{and}\qquad Y_{n}\ =\ \theta Y_{n-1}\,+\,X_{n}\quad\text{for }n\ge 1, 
\end{equation}
with drift parameter $\theta\in\R$ and i.i.d.~innovations $X_{1},X_{2},\ldots$ with a nondegenerate law. The ergodic properties of this Markov chain with continuous state space, which can be viewed as a discrete version of the Ornstein-Uhlenbeck process, are well known. It follows from \cite[Corollary 4.3]{GolMal:00}, see also \cite[Proposition 1]{GZ04}, that it is positive recurrent if and only if  $\theta\in (-1,1)$ and $\Erw \log (1 + \vert X_{1}\vert) < \infty$. In this case, the chain converges in distribution towards its unique stationary regime, given by the law of the so-called perpetuity
\begin{equation*}
   Y_\infty\; =\; \sum_{n\ge 1}\,\theta^{n-1} X_{n}.
\end{equation*}
Let $T_{\theta} = \inf\{n\ge 1:Y_{n} < 0\}$ be the time where the process becomes negative and consider the corresponding persistence probabilities 
\begin{equation*}
   p_{n}(\theta)\;=\; \Prob [T_{\theta} > n] \; =\; \Prob [Y_{1}\ge 0,\ldots,Y_{n} \ge 0], \quad n\ge 1.
\end{equation*}
The asymptotic behaviour of $p_{n}(\theta)$ as $n\to\infty$ has been recently investigated in \cite{AMZ21, DDY, HKW20} within a broader class of autoregressive models. See also the end of Section 3 in \cite{AS15} and the references therein for a heuristic discussion. In the positive recurrent case, and for a two-sided innovation law, the results in \cite{AMZ21} provide conditions for the rough estimate

\begin{equation}
\label{AMX}
   p_{n}(\theta) \; =\; \lambda^{n+ o(n)},
\end{equation}
where $\lambda\in (0,1)$ denotes the largest eigenvalue of some associated compact operator whose explicit form is typically unknown. See also \cite{AK19} for an asymptotic study in the case of Gaussian innovations. In the case $\theta\in (0,1)$ and for a two-sided innovation law with bounded support and positive absolutely continuous component, the results in \cite{HKW20} show that the rough estimate \eqref{AMX} can be refined to$p_{n}(\theta)\,\sim \,c\lambda^n$ for some positive constant $c.$ 

\vspace{.1cm}

The present work aims at providing some exact formulae for certain such order 1 autoregressive persistence probabilities, which then also lead to more explicit asymptotics. Our first main result establishes the announced unexpected relationship with Mallows-Riordan polynomials and considers the case when the innovation law is uniform on $[-1,1]$. For this case, we stipulate that $p_{n}^{U}(\theta)$ and $T_{\theta}^{\,U}$ are used hereafter for $p_{n}(\theta)$ and $T_{\theta}$, respectively.

\begin{theorem}
\label{thm:main_1}
For any $\theta\in[-1,\frac{1}{2}]$ and $n\ge 1$, one has
\begin{equation}\label{eq:MR-id}
p_{n}^{U}(\theta)\; =\; \frac{J_{n+1}(\theta)}{2^{n}\,n!}\cdot
\end{equation}
\end{theorem}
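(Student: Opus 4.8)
The plan is to pass to the defective Laplace transforms $M_n(t):=\Erw\big[e^{tY_n}\,\mathbf 1_{\{Y_1\ge0,\dots,Y_n\ge0\}}\big]$, which are entire in $t$ (the $Y_n$ being bounded), with $M_0\equiv1$ and $M_n(0)=p_n^U(\theta)$, and first to restate the theorem in generating‑function form. Write $e_\theta(w):=\sum_{n\ge0}\theta^{\binom n2}w^n/n!$ for the deformed exponential function; it satisfies $e_\theta'(w)=e_\theta(\theta w)$, so differentiating the Mallows--Riordan identity \eqref{id:MR} gives $e_\theta(\theta w)/e_\theta(w)=\sum_{m\ge0}(\theta-1)^m J_{m+1}(\theta)\,w^m/m!$, and evaluating this at $w=-v$ with $v:=z/\big(2(1-\theta)\big)$ turns the right‑hand side into $\sum_{m\ge0}\frac{J_{m+1}(\theta)}{2^m m!}\,z^m$. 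Hence, with $P(z):=\sum_{n\ge0}p_n^U(\theta)z^n$, Theorem~\ref{thm:main_1} is equivalent to the identity
\[
P(z)\;=\;\frac{e_\theta(-\theta v)}{e_\theta(-v)},\qquad v=\frac{z}{2(1-\theta)}.
\]

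Next I would derive a functional recursion for the $M_n$; this is the only place the hypothesis $\theta\in[-1,\tfrac12]$ enters. On $\{Y_1\ge0,\dots,Y_{n-1}\ge0\}$ one has $0\le Y_{n-1}\le 1/(1-\theta)$ if $\theta\in[0,\tfrac12]$ (an immediate induction, since then $Y_j\le\theta Y_{j-1}+1$) and $0\le Y_{n-1}\le1$ if $\theta\in[-1,0)$ (since then $Y_j=\theta Y_{j-1}+X_j\le1$), so in all cases $-\theta Y_{n-1}\in[-1,1]$; thus $\{Y_n\ge0\}=\{X_n\ge-\theta Y_{n-1}\}$ cuts the support $[-1,1]$ of $X_n$ at a single interior point. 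Integrating the symmetric uniform density over $[-\theta Y_{n-1},1]$ gives $\Erw\big[e^{tY_n}\mathbf 1_{\{Y_n\ge0\}}\mid Y_{n-1}\big]=\big(e^{t}e^{t\theta Y_{n-1}}-1\big)/(2t)$ on that event, and taking expectations yields $2t\,M_n(t)=e^{t}M_{n-1}(\theta t)-M_{n-1}(0)$ for every $n\ge1$. Iterating this down to $M_0\equiv1$ (for $\theta\ne0$) produces, with $\sigma_j:=1+\theta+\dots+\theta^{j-1}$, the closed form
\[
M_n(t)\;=\;\frac{e^{t\sigma_n}}{(2t)^n\theta^{\binom n2}}\;-\;\sum_{j=1}^{n}\frac{e^{t\sigma_{j-1}}}{(2t)^j\theta^{\binom j2}}\,p_{n-j}^U(\theta).
\]
Since $M_n$ is entire, the coefficient of $t^0$ on the left is $M_n(0)=p_n^U(\theta)$, while on the right it is $\frac{\sigma_n^{\,n}}{2^n n!\,\theta^{\binom n2}}-\sum_{j=1}^n\frac{\sigma_{j-1}^{\,j}}{2^j j!\,\theta^{\binom j2}}p_{n-j}^U(\theta)$; equating these gives the linear recursion $\sum_{j=0}^{n}\frac{\sigma_{j-1}^{\,j}}{2^j j!\,\theta^{\binom j2}}\,p_{n-j}^U(\theta)=\frac{\sigma_n^{\,n}}{2^n n!\,\theta^{\binom n2}}$ (with the $j=0$ summand read as $p_n^U(\theta)$), i.e.\ $P(z)D(z)=E(z)$ with $D(z):=\sum_{n\ge0}\frac{\sigma_{n-1}^{\,n}z^n}{2^n n!\,\theta^{\binom n2}}$ and $E(z):=\sum_{n\ge0}\frac{\sigma_n^{\,n}z^n}{2^n n!\,\theta^{\binom n2}}$. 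The remaining value $\theta=0$, hence all of $[-1,\tfrac12]$, then follows from the continuity of $\theta\mapsto p_n^U(\theta)$.

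The final, and least mechanical, step is to evaluate $E(z)/D(z)$. Using $\sigma_{n-1}(1-\theta)=1-\theta^{n-1}$ and $\sigma_n(1-\theta)=1-\theta^n$, rewrite $D$ and $E$ as $\sum_n\frac{(1-\theta^{n-1})^n v^n}{n!\,\theta^{\binom n2}}$ and $\sum_n\frac{(1-\theta^{n})^n v^n}{n!\,\theta^{\binom n2}}$; expanding the $n$‑th powers by the binomial theorem, substituting $m=n-i$, and simplifying the $\theta$‑exponents via $(n-1)i-\binom n2=\binom i2-\binom m2$ and $ni-\binom n2=\binom{i+1}2-\binom m2$, each double sum factors into a product:
\[
D(z)=e_\theta(-v)\,e_{1/\theta}(v),\qquad E(z)=e_\theta(-\theta v)\,e_{1/\theta}(v),\qquad e_{1/\theta}(v):=\sum_{m\ge0}\theta^{-\binom m2}\frac{v^m}{m!}.
\]
The common factor $e_{1/\theta}(v)$ cancels in the quotient, so $P(z)=E(z)/D(z)=e_\theta(-\theta v)/e_\theta(-v)$, which is exactly the reformulation of the first paragraph; by \eqref{id:MR} this equals $\sum_{m\ge0}\frac{J_{m+1}(\theta)}{2^m m!}z^m$, completing the proof. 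The main obstacle is precisely recognizing these $e_{1/\theta}$‑factorizations of $D$ and $E$ — equivalently, verifying that $E(z)/D(z)$ solves the functional–differential equation $2(1-\theta)P'(z)=P(z)\big(P(z)-\theta P(\theta z)\big)$, $P(0)=1$, which encodes \eqref{recMR} under $J_{n+1}(\theta)=2^n n!\,p_n^U(\theta)$; everything before it is a straightforward unravelling of the Markov recursion.
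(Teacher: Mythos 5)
Your proof is correct and is a genuine variant of the paper's first proof (Subsection~\ref{subsec:proof_th1}), sharing its overall shape --- derive a linear convolution recursion for the $p_n^U(\theta)$, write the generating function as a ratio, identify that ratio with $E(\theta,\theta z)/E(\theta,z)$ --- but differing in both the derivation and the endgame. To get the recursion, the paper integrates \eqref{eq:why_theta<=1/2} step by step using the antiderivatives $H_k(u)=u^k/k!$; you instead condition on $Y_{n-1}$ and iterate the Laplace-transform functional equation $2t\,M_n(t)=e^tM_{n-1}(\theta t)-M_{n-1}(0)$, which is cleaner and makes the role of the Markov dynamics transparent. The resulting recursions are not literally the same: yours involves $\sigma_{j-1}=1+\theta+\cdots+\theta^{j-2}$, the paper's involves $\theta+\cdots+\theta^{j-2}=\theta\sigma_{j-2}$, so the numerator/denominator series $E,D$ you obtain are shifted relative to Proposition~\ref{prop:recurrence_relation}. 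The closing step is also different and, I think, more illuminating: the paper cross-multiplies and verifies a binomial-sum identity by an index swap, while you factor $D=e_\theta(-v)e_{1/\theta}(v)$ and $E=e_\theta(-\theta v)e_{1/\theta}(v)$ via the exponent identities $(n-1)i-\binom n2=\binom i2-\binom{n-i}2$ and $ni-\binom n2=\binom{i+1}2-\binom{n-i}2$, so the common $e_{1/\theta}$-factor cancels outright and the deformed exponential appears naturally. Your closing remark --- that $E/D$ solves $2(1-\theta)P'=P\,(P-\theta P(\theta\cdot))$, which under $J_{n+1}=2^nn!\,p_n^U$ is exactly the Mallows--Riordan recursion \eqref{recMR} --- is also correct and gives yet another way to finish. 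The only small points of care, which you handle adequately: the \emph{a priori} bound $-\theta Y_{n-1}\in[-1,1]$ on the persistence event (which is precisely where $\theta\in[-1,\tfrac12]$ enters and which you justify case by case), and the excluded value $\theta=0$ (where the closed form for $M_n(t)$ has $\theta$ in a denominator), filled in by continuity in $\theta$ of both sides of \eqref{eq:MR-id}.
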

 
If $\theta = 0$, it follows directly that $p_{n}^{U}(0)=2^{-n}$, thus giving $J_{n+1} (0) = n!$ for any $n$, an immediate consequence also of Eq.\,\eqref{id:MR}. It can be interpreted combinatorially by the well-known bijection between permutations and ordered labeled trees obtained via the contour function. If $\theta = -1$, a formal differentiation of \eqref{id:MR} and some trigonometry shows that  
\begin{equation*}
   \sum_{n\ge 0} J_{n+1}(-1)\, \frac{z^{n}}{n!}\; = \;  \frac{1 + \sin z}{\cos z}
\end{equation*}
and then by comparison of coefficients that $J_{n+1}(-1) = A_{n}$, where $A_{n}$ denotes Euler's $n$-{th} zigzag number, see also Propri\'et\'e 2 in \cite{Kreweras80} for a derivation based on the recursion \eqref{recMR}. In Remark \ref{Zzz}(d) below, we provide a combinatorial explanation of this formula by relating $A_{n}$ to the probability $p_{n}^{U}(-1)$ when viewed as the renormalized volume of a certain polytope. In the upper boundary case $\theta = \frac{1}{2}$, it has been observed in \cite{GSY95, Rob73} that $2^{n(n-1)/2} J_{n+1}(\frac{1}{2})$ equals the number of initially connected acyclic digraphs with $n+1$ vertices, where ``initially connected'' means that there is a directed path from the vertex labeled 1 to any other vertex of the digraph. The relationship between this quantity and $p_{n}^{U}(\frac{1}{2})$ may appear even more surprising. 

\vspace{.1cm}
We present three proofs of Theorem \ref{thm:main_1}, all to be found in Subsection \ref{sec:proof_main_theorem}. The first one relies on a linear recurrence relation between the $p_{n}(\theta)$ that leads to a closed-form expression of their generating function as a ratio, see Proposition \ref{prop:recurrence_relation}, which in turn is of the same kind as a combinatorial formula for Mallows-Riordan polynomials stated in \cite[Eq.\,(14.6)]{Gessel80}. The other two proofs are  variations, the first one via multivariate changes of variable and the second one by using that $(\theta-1)^{n} J_{n+1} (\theta)/n!$ can be viewed as the algebraic volume of a certain polytope, see Proposition \ref{prop:MR_volume}. This volume interpretation is further discussed in Paragraph \ref{SoliTutti} in the framework of Tutte polytopes. Thanks to the exact character of Theorem \ref{thm:main_1}, we are able in Subsection \ref{Asyb} to derive some precise asymptotics for $p_{n}^{U}(\theta)$ and even a complete asymptotic expansion if $\theta\in [0,\frac{1}{2}]$. In particular, the exponential rate $\lambda$ in \eqref{AMX} is identified via the first negative root of the deformed exponential function
\begin{equation}\label{eq:deformed exp}
E(\theta, z) \; =\; \sum_{n\ge 0}\,\theta^{n(n-1)/2}\,\frac{z^{n}}{n!}\cdot
\end{equation}
Equivalently, this identification provides the spectral gap associated with a class of truncated Volterra operators, see Remark \ref{LastR}(c). Finally, some further infinite divisibility properties related to $T_{\theta}^{U}$ are discussed in Paragraph \ref{ID}.

\vspace{.2cm}
Eq.\,\eqref{eq:MR-id} established by Theorem~\ref{thm:main_1} works only for $\theta\in[-1,\frac{1}{2}]$. Namely, for $\theta$ outside this interval the situation changes because the domain of integration of the integral defining $p_{n}^{U}(\theta)$ undergoes truncations that make it behave differently as a function of $\theta$. Therefore, we will show by two further theorems, assuming $\theta<0$ and $\theta>0$, respectively, that a relation between the family $\{p_{n}(\theta):k=0,\dots n\}$ and its involutive conjugate $\{p_{n}(1/\theta):k=0,\ldots,n\}$ can be established for each $n\ge 1$. For $\theta<-1$ and $\theta\ge 2$, these relations can then be utilized to derive identites for $p_{n}(\theta)$ in terms of two new families of polynomials with integer coefficients, denoted $\wJ_{n}({\theta})$ and $\hJ_{n}({\theta})$, respectively, and still related to the Mallows-Riordan polynomials $J_{n}(\theta)$ through identities in terms of their generating functions, see Corollaries \ref{cor:Thm 2} and \ref{cor:Thm 3}.

\vspace{.1cm}
Here is the first of the two announced results, for which we go back to the initial model \eqref{eq:AR(1)} with negative drift $(\theta<0)$ and an innovation law whose non-negative part does not have atoms. 

\begin{theorem}
\label{thm:main_2}
Assuming $\theta<0$ and that the innovation law in \eqref{eq:AR(1)} has no  atoms on $[0,\infty),$ the relation
\begin{equation}\label{eq:duality theta<0}
\sum_{k=0}^{n} (-1)^k\, p_{k}(\theta)\,p_{n-k}(1/\theta)\; =\; 0
\end{equation}
holds for all $n\ge 1$.
\end{theorem}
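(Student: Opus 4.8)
Recalling the convention $p_{0}(\theta)=\Prob[T_{\theta}>0]=1$, the asserted identity is equivalent to the reciprocity
\[
\Big(\sum_{k\ge0}(-1)^{k}p_{k}(\theta)\,s^{k}\Big)\Big(\sum_{k\ge0}p_{k}(1/\theta)\,s^{k}\Big)\;=\;1
\]
between generating functions, and the plan is to obtain it from an almost sure cancellation over a single realization of the innovations, exploiting the time-reversal duality that sends an AR(1) with drift $\theta$ to one with drift $1/\theta$. First I would record the elementary observation that on $\{Y_{1}\ge0,\dots,Y_{n}\ge0\}$ one has $Y_{k}=\theta Y_{k-1}+X_{k}\le X_{k}$, since $\theta<0$ and $Y_{k-1}\ge0$, so that $X_{1},\dots,X_{n}\ge0$ there; combined with the absence of atoms on $[0,\infty)$ this will serve to rule out the degenerate equalities $Y_{n}=0$ and (for the reversed walk) $V_{n}=0$.

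The algebraic core is the following. Fix $X_{1},\dots,X_{n}$, let $Y$ be the $\theta$-walk of \eqref{eq:AR(1)}, and let $V_{i}:=\sum_{h=1}^{i}(1/\theta)^{i-h}X_{n+1-h}$ be the $1/\theta$-walk run on the reversed innovations $X_{n},X_{n-1},\dots$; unrolling \eqref{eq:AR(1)} gives the exact identity
\[
V_{i}\;=\;|\theta|\Big(Y_{n-i}-(1/\theta)^{i}Y_{n}\Big),\qquad 0\le i\le n.
\]
Set $A_{k}:=\{Y_{1}\ge0,\dots,Y_{k}\ge0\}$ and $B_{k}:=\{V_{1}\ge0,\dots,V_{n-k}\ge0\}$. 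Then $A_{k}$ is $\sigma(X_{1},\dots,X_{k})$-measurable with $\Prob(A_{k})=p_{k}(\theta)$, $B_{k}$ is $\sigma(X_{k+1},\dots,X_{n})$-measurable with $\Prob(B_{k})=p_{n-k}(1/\theta)$ (the innovations $X_{n},\dots,X_{k+1}$ being i.i.d.\ with the given law), and $A_{k}$ and $B_{k}$ are independent. The events $A_{k}$ decrease in $k$ and $B_{k}$ increase in $k$, so with $K:=\max\{k:A_{k}\text{ holds}\}$ and $L:=\max\{l:V_{1}\ge0,\dots,V_{l}\ge0\}$ one gets $\mathbbm{1}_{A_{k}}\mathbbm{1}_{B_{k}}=\mathbbm{1}\{n-L\le k\le K\}$, whence $\sum_{k=0}^{n}(-1)^{k}\mathbbm{1}_{A_{k}}\mathbbm{1}_{B_{k}}=\sum_{k=n-L}^{K}(-1)^{k}$, which vanishes precisely when $K+L<n$ or $K+L-n$ is odd. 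Taking expectations, Theorem~\ref{thm:main_2} is thus reduced to the almost sure statement: \emph{if $K+L\ge n$ then $K+L-n$ is odd.}

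This parity lemma is where the work lies. If $K+L\ge n$, the index sets $\{1,\dots,K\}$ and $\{n-L+1,\dots,n\}$ cover $\{1,\dots,n\}$, so applying the first step to both the $\theta$-walk and the reversed $1/\theta$-walk shows that all $X_{i}\ge0$ almost surely. I then distinguish cases. If $K<n$, then $Y_{K+1}<0$ by maximality of $K$; substituting $i=n-K-1$ in the duality identity and using $V_{n-K-1}\ge0$ (legitimate since $n-K-1\le L$) forces $(1/\theta)^{\,n-K-1}Y_{n}\le Y_{K+1}<0$, so $Y_{n}\ne0$ and $\sign(Y_{n})=(-1)^{n-K}$. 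Symmetrically, if $L<n$ then $V_{L+1}<0$, and $i=L+1$ together with $Y_{n-L-1}\ge0$ gives $(1/\theta)^{\,L+1}Y_{n}>Y_{n-L-1}\ge0$, so $\sign(Y_{n})=(-1)^{L+1}$. When $K<n$ and $L<n$, comparing the two formulas yields $(-1)^{n-K}=(-1)^{L+1}$, i.e.\ $K+L-n$ is odd. If instead $K=n$, then all $Y_{i}\ge0$ and the atomless hypothesis gives $Y_{n}>0$ a.s., hence $\sign(Y_{n})=+1$ and the $L$-side computation forces $L$, and thus $K+L-n=L$, odd; the case $L=n$ is the mirror image, using $V_{n}=-|\theta|(1/\theta)^{n}Y_{n}>0$ a.s.\ to get $\sign(Y_{n})=(-1)^{n+1}$ and then $K$ (hence $K+L-n=K$) odd; and $K=L=n$ can occur only if $n$ is odd, in which case $K+L-n=n$ is again odd.

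The main obstacle is exactly this lemma. The overlap statement $K+L\ge n$ alone is not enough: one genuinely needs the closed-form duality $V_{i}=|\theta|(Y_{n-i}-(1/\theta)^{i}Y_{n})$ to convert a first failure of one of the two one-sided walks into a sign constraint on the common endpoint $Y_{n}$ for the other, and then a careful accounting of the alternating signs of the powers of $1/\theta$ --- with the hypothesis of no atoms on $[0,\infty)$ entering only to exclude the boundary equalities $Y_{n}=0$ and $V_{n}=0$ that would otherwise invalidate the sign analysis in the extreme cases $K=n$ or $L=n$.
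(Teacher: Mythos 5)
Your proof is correct and takes a genuinely different route from the paper's. The paper first reduces to the case where the innovation law is symmetric and continuous (by noting that for $\theta<0$ the persistence integral lives in the positive orthant, so only the restriction of $F$ to $\R^{+}$ matters and one can symmetrize), and then carries out an iterated-integral computation: after passing to the dual perpetuities $S_{k}=X_{1}+\cdots+r^{k-1}X_{k}$, the probability $p_{n}(\theta)$ is peeled off layer by layer with Fubini, producing the alternating telescoping sum $\sum_{k=1}^{n}(-1)^{k-1}p_{n-k}(\theta)p_{k}(1/\theta)$ term by term. You instead prove the stronger almost-sure statement that the alternating sum of indicators $\sum_{k=0}^{n}(-1)^{k}\1_{A_{k}}\1_{B_{k}}$ vanishes pathwise, where $A_{k}$ depends on $X_{1},\ldots,X_{k}$ and $B_{k}$ on $X_{k+1},\ldots,X_{n}$, so that the expectation factorizes by independence and gives exactly $\sum_{k}(-1)^{k}p_{k}(\theta)p_{n-k}(1/\theta)$. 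The pathwise vanishing is reduced to a clean parity lemma --- $K+L\ge n$ forces $K+L-n$ to be odd --- which you establish via the exact conjugation identity $V_{i}=|\theta|\big(Y_{n-i}-(1/\theta)^{i}Y_{n}\big)$ relating the forward $\theta$-chain and the reversed $1/\theta$-chain on the same realization; I checked this identity and the ensuing case analysis, including the boundary cases $K=n-1$ (where $i=n-K-1=0$ still yields $Y_{n}=Y_{K+1}<0$, consistent with the sign formula $(-1)^{n-K}$), $K=n$, $L=n$, and $K=L=n$. The hypothesis of no atoms on $[0,\infty)$ enters in both proofs but differently: the paper uses it to justify the symmetrization/continuity reduction, whereas you use it only to exclude the null events $Y_{n}=0$ and $V_{n}=0$ that would otherwise block the strict sign comparison when $K=n$ or $L=n$. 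Your approach buys a conceptual explanation of why the alternating identity holds (a deterministic parity constraint between the two one-sided failure times linked by the common endpoint $Y_{n}$), works directly with the given innovation law without any symmetrization step, and in fact proves the stronger pointwise cancellation; the paper's computation is closer to the spirit of the proof of Theorem~\ref{thm:main_3} and more readily generalizes to yield linear recurrences.
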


The proof of this result, to be found in Subsection \ref{sec:case_<-1}, relies on a linear recurrence similar to the one derived in our first proof of Theorem~\ref{thm:main_1}. It further hinges on the fact that the $p_{n}(\theta)$ can be expressed in terms of the finite dual perpetuities $\sum_{k=1}^{n}r^{k-1}X_{k}$, where $r:=-1/\theta$. This is where the involution comes into play and our argument only requires the absence of non-negative atoms, a condition which also seems to be necessary by Remark \ref{Atomic} below.  

If $\theta =-1$ and the innovations are uniform on $[-1,1]$, then Theorem~\ref{thm:main_2} yields the well-known alternation property of Euler's zigzag numbers:
\begin{equation*}
   \sum_{k= 0}^{n} \binom{n}{k} (-1)^k A_{k}\, A_{n-k} \; = \; 0,\quad n\ge 1,
\end{equation*}
which corresponds to the trivial identity
\begin{equation*}
   \lpa\frac{1 - \sin z}{\cos z}\rpa\lpa\frac{1 +\sin z}{\cos z}\rpa\; =\; 1
\end{equation*}
between generating functions. 

\begin{corollary}\label{cor:Thm 2}
If $\theta\le -1$ and innovations are uniform on $[-1,1]$, then
\begin{equation}\label{eq:main_theorem_2}
p_{n}^{U}(\theta)\; =\; \frac{\wJ_{n+1}(1/\theta)}{2^{n}\, n!}
\end{equation}
for all $n\ge 0$, where the $\wJ_{n+1}({\theta})$ are defined by the identity
\begin{equation}\label{eq:modified_MR}
\sum_{n\ge 0} \wJ_{n+1}({\theta})\, \frac{z^{n}}{n!}\;\fps \lpa \sum_{n\ge 0} (-1)^{n} J_{n+1}(\theta)\, \frac{z^{n}}{n!}\rpa^{\!-1}
\end{equation}
and again a family  of polynomials in $\Z[X]$. Moreover, $\wJ_{1}(\theta):=1$ and $\wJ_{n+1}({\theta})$ has degree $n(n-1)/2$, valuation $n-1$ and coefficients of constant sign which alternates with $n$.
\end{corollary}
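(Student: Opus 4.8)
Here is how I would go about proving Corollary~\ref{cor:Thm 2}.

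The identity \eqref{eq:main_theorem_2} should come out by combining the two input theorems with \eqref{eq:modified_MR}. For $\theta\le-1$ one has $1/\theta\in[-1,0)\subset[-1,\tfrac12]$, so Theorem~\ref{thm:main_1} applies at $1/\theta$ and gives $p_n^U(1/\theta)=J_{n+1}(1/\theta)/(2^nn!)$; moreover $\theta<0$ and the uniform law on $[-1,1]$ has no atom on $[0,\infty)$, so Theorem~\ref{thm:main_2} gives $\sum_{k=0}^n(-1)^kp_k^U(\theta)p_{n-k}^U(1/\theta)=0$ for $n\ge1$, which together with the convention $p_0^U\equiv1$ says precisely that the ordinary generating functions $A(z)=\sum_{n\ge0}(-1)^np_n^U(\theta)z^n$ and $B(z)=\sum_{n\ge0}p_n^U(1/\theta)z^n$ are reciprocal. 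Since $B(2y)=\sum_{n\ge0}J_{n+1}(1/\theta)\,y^n/n!$ and $A(2y)=\sum_{n\ge0}(-1)^n2^np_n^U(\theta)\,y^n$, comparing $A(2y)B(2y)=1$ with \eqref{eq:modified_MR} read at $\theta\mapsto1/\theta$, $z\mapsto-y$ and matching coefficients of $y^n$ gives \eqref{eq:main_theorem_2}; the substitution $\theta\mapsto1/\theta$ is harmless because \eqref{eq:modified_MR} is an identity of formal power series in $z$ over $\Q(\theta)$ whose inverted factor has constant term $J_1=1$.

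Extracting the coefficient of $z^n/n!$ in \eqref{eq:modified_MR} yields, for $n\ge1$, the recursion $\wJ_{n+1}(\theta)=\sum_{k=0}^{n-1}(-1)^{n-k+1}\binom nk\wJ_{k+1}(\theta)J_{n-k+1}(\theta)$ with $\wJ_1:=1$, whence $\wJ_{n+1}\in\Z[X]$ by induction (the $J_m$ being in $\Z[X]$). The same recursion gives the degree: the $k$-th term has degree $\binom k2+\binom{n-k}2$, a strictly convex function of $k$ that is maximal on $\{0,\dots,n-1\}$ only at $k=0$, with value $\binom n2$; and the $k=0$ term is $(-1)^{n+1}J_{n+1}(\theta)$, of degree $\binom n2$ with leading coefficient $(-1)^{n+1}=(-1)^{n-1}$. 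Hence $\deg\wJ_{n+1}=n(n-1)/2$ with leading coefficient $(-1)^{n-1}$.

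For the valuation the recursion is awkward (it involves heavy cancellation), so I would instead use the deformed exponential. Differentiating \eqref{id:MR} in $z$ gives $\sum_{m\ge0}(\theta-1)^mJ_{m+1}(\theta)z^m/m!=\partial_zE(\theta,z)/E(\theta,z)$, and substituting $z=(1-\theta)w$ into \eqref{eq:modified_MR}, using $(-1)^n(1-\theta)^n=(\theta-1)^n$, produces the identity $\sum_{n\ge0}\wJ_{n+1}(\theta)(1-\theta)^nw^n/n!=E(\theta,w)/\partial_wE(\theta,w)$ of power series in $w$ with polynomial coefficients in $\theta$. Now view $E=\sum_{k\ge0}\theta^{\binom k2}w^k/k!$ and $\partial_wE=\sum_{j\ge0}\theta^{\binom{j+1}2}w^j/j!$ as power series in $\theta$: $[\theta^i]E$ is supported on $w^k$ with $\binom k2=i$, hence a polynomial in $w$ of degree $\le i+1$ (equality only for $i\in\{0,1\}$), while $[\theta^i]\partial_wE$ is a $w$-monomial of degree $\le i$; therefore $[\theta^i](E/\partial_wE)$ is a polynomial in $w$ of degree $\le i+1$ whose coefficient of $w^{i+1}$, coming only from the contributions of $[\theta^0]E$ and $[\theta^1]E$, equals $(-1)^i$ when $i=0$ and $\tfrac12(-1)^i$ when $i\ge1$, in particular is nonzero. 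Comparing $w^n$-coefficients on the two sides (zero for $n>i+1$, nonzero for $n=i+1$), and using that $(1-\theta)^n$ has constant term $1$, one gets $[\theta^i]\wJ_{n+1}=0$ for $i<n-1$ while $[\theta^{n-1}]\wJ_{n+1}\ne0$ (in fact $[\theta^{n-1}]\wJ_{n+1}=(-1)^{n-1}n!/2$ for $n\ge2$); so $\wJ_{n+1}$ has valuation exactly $n-1$.

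The remaining claim — that the coefficients of $\wJ_{n+1}$ all have one and the same sign, which by the leading coefficient just computed is then forced to be $(-1)^{n-1}$ — is the hard part. Multiplying the recursion by $(-1)^{n-1}$ and writing $|\wJ_{k+1}|:=(-1)^{k-1}\wJ_{k+1}$, the inductive hypothesis that $|\wJ_{k+1}|$ has nonnegative coefficients for $k<n$ turns the goal into the coefficient-wise inequality $\sum_{k=1}^{n-1}\binom nk|\wJ_{k+1}(\theta)|\,J_{n-k+1}(\theta)\le J_{n+1}(\theta)$. I would try to prove this by induction on $n$, using the Mallows--Riordan recursion \eqref{recMR} (or \eqref{id:Mathar}) to rewrite $J_{n+1}$ so that the subtracted sum gets absorbed into it. Equivalently, in generating-function form: $\sum_{n\ge0}(-1)^n\wJ_{n+1}(\theta)z^n/n!=\bigl(\sum_{n\ge0}J_{n+1}(\theta)z^n/n!\bigr)^{-1}=(1-z)\bigl(1+V(z,\theta)\bigr)^{-1}$, where $V:=(1-z)\sum_{n\ge0}J_{n+1}(\theta)z^n/n!-1$ has nonnegative coefficients (this follows from \eqref{id:Mathar}) and vanishes to order $2$ in $z$ because $J_1=J_2=1$; the claim becomes that all $z^n$-coefficients ($n\ge1$) of $(1-z)(1+V)^{-1}$ are non-positive, i.e.\ the polynomials $c_n:=[z^n](1+V)^{-1}\in\Q[X]$ decrease coefficient-wise in $n$, which one should be able to extract from $c_n=-\sum_{k\ge2}V_kc_{n-k}$ together with positivity estimates on the increments $V_k-V_{k-1}$. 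Making this "survival of sign through the cancellations" precise is, I expect, the main obstacle of the corollary.
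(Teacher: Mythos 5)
Your derivation of the identity \eqref{eq:main_theorem_2} from Theorems~\ref{thm:main_1} and \ref{thm:main_2} is fine (the paper does the same in substance, just phrased as an induction rather than via generating functions), and so is the extraction of the recursion \eqref{Jrec} from \eqref{eq:modified_MR} and the resulting argument that $\wJ_{n+1}\in\Z[X]$ has degree $n(n-1)/2$; this is exactly the paper's route. Your valuation argument via $E(\theta,w)/\partial_w E(\theta,w)$ and comparison of $\theta$-powers is correct as far as I can tell, but it is a genuinely different and rather more laborious route than the one in the paper, which gets the valuation \emph{for free} together with the sign claim from a single recursion, as explained below.

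The real problem is the sign claim, which you acknowledge you do not prove: the paragraph sketching the $(1-z)(1+V)^{-1}$ idea ends with an admission that the "survival of sign through the cancellations" is the main obstacle, and no argument is supplied. That is a genuine gap, and it is exactly the place where the paper uses an idea you did not find. The trick is to pass through \eqref{id:Mathar}: combining it with \eqref{eq:modified_MR} gives
\begin{equation*}
\sum_{n\ge 0}\wJ_{n+1}(\theta)\,\frac{z^n}{n!}\;=\;\exp\!\left[\sum_{n\ge 1}(-1)^{n-1}(1+\theta+\cdots+\theta^{n-1})\,J_n(\theta)\,\frac{z^n}{n!}\right].
\end{equation*}
Differentiating in $z$ and taking a Cauchy product yields a recursion $\wJ_{n+2}=\sum_{k=0}^n\binom{n}{k}(-1)^k(1+\theta+\cdots+\theta^k)J_{k+1}\wJ_{n+1-k}$, which still mixes signs; but after setting $B_k=(-1)^{k-1}\wJ_{k+1}/k!$ and $A_k=(1+\theta+\cdots+\theta^k)J_{k+1}/k!$ this reads $(n+1)B_{n+1}=A_n-\sum_{k=0}^{n-1}A_kB_{n-k}$, and now one substitutes for $A_n$ using the Cauchy-product relation behind \eqref{Jrec} (equivalently, $J_{n+1}=\sum_{k=1}^n\binom{n}{k}(-1)^{k-1}\wJ_{k+1}J_{n-k+1}$) to cancel the $1+\theta+\cdots+\theta^k$ parts and obtain
\begin{equation*}
B_{n+1}(\theta)\;=\;\frac{1}{n+1}\sum_{k=0}^{n-1}\frac{\theta^{k+1}+\cdots+\theta^n}{k!}\,J_{k+1}(\theta)\,B_{n-k}(\theta),
\end{equation*}
in which every coefficient polynomial $\frac{\theta^{k+1}+\cdots+\theta^n}{k!}J_{k+1}(\theta)$ has nonnegative coefficients and valuation $k+1$. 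A single induction now shows $B_{n+1}$ has nonnegative coefficients and valuation exactly $n$, which is precisely the statement that $\wJ_{n+2}$ has valuation $n$ and all coefficients of sign $(-1)^n$. So both the sign and valuation assertions drop out at once, making your separate deformed-exponential analysis of the valuation unnecessary. Without something like this re-expression, your proof of the corollary is incomplete.
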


The $\wJ_{n}(\theta)$ for $n=2,\ldots,6$ are easily found explicitly with the help of \eqref{eq:modified_MR}, viz.:
\begin{align*}
   \wJ_2(\theta) &\; =\; 1,\\
   \wJ_3(\theta) &\; =\;  -\theta, \\ 
   \wJ_4(\theta) &\; =\;   3\theta^{2}\, +\, \theta^3, \\
   \wJ_5(\theta) &\; = \; -\lpa 12\theta^3\,+\,10\theta^4\,+4\theta^5\,+\,\theta^6\rpa,\\
   \wJ_6(\theta)&\; = \; 60\theta^4\,+\,80\theta^5\, +\, 60\theta^6\, +\, 35\theta^7 \, +\, 15\theta^8\,+\, 5\theta^9\, +\, \theta^{10}.
\end{align*}

Corollary \ref{cor:Thm 2} is proved in Subsection \ref{SpwJ}. At $\theta =-1$, the unique fixed point of the involution $\theta\mapsto 1/\theta$ on the negative halfline, Theorems \ref{thm:main_1} and \ref{thm:main_2} yield a remarkable order-$2$ phase transition for the mapping $\theta\mapsto p^{\,U}_{n}(\theta)$ for each $n\ge 2$. Namely, its first derivative is continuous at $-1$, but its second derivative is not, see Propositions~\ref{Phase1} and~\ref{Phase2}. As in the case $\theta\in [-1,\frac{1}{2}]$, the exact formula \eqref{eq:main_theorem_2} gives precise exponential asymptotics for $p^{\,U}_{n}(\theta)$  if $\theta < -1$, expressed in terms of the first negative root of the deformed exponential function $E(1/\theta,z)$, see Proposition \ref{Asymp2}.\\

For $\theta>0$, another relation between the $p_{n}(\theta)$ and their involutive duals $p_{n}(1/\theta)$ holds as shown by our next theorem, under the further assumption that the innovation law is symmetric and continuous. In combination with Eq.\,\eqref{eq:MR-id} for $\theta\in (-1,\frac{1}{2}]$, this further implies an identity for $p_{n}(\theta)$ in terms of a polynomial $\hJ_{n+1}(\theta)$ for all $n\ge 0$ and $\theta>2$.

\begin{theorem}\label{thm:main_3}
Assuming $\theta>0$ and the innovation law in \eqref{eq:AR(1)} to be continuous and symmetric, the relation
\begin{gather}\label{eq:duality theta>0}
\sum_{k=0}^{n}p_{k}(\theta)\,p_{n-k}(1/\theta)\; =\; 1
\intertext{holds for all $n\ge 0$ or, equivalently,}
\Bigg(\sum_{n\ge 0}\, p_{n}(\theta)\, z^{n}\Bigg)\Bigg(\sum_{n\ge 0}\, p_{n}(1/\theta) \,z^{n}\Bigg)\; = \; \frac{1}{1-z}\nonumber
\end{gather}
for all $z\in (-1,1).$
\end{theorem}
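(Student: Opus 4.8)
The plan is to express both $p_n(\theta)$ and $p_n(1/\theta)$ through one and the same weighted random walk, and then to run a ``position of the maximum'' decomposition in the spirit of Sparre Andersen's combinatorial argument. Fix $\theta>0$, let $X_1,X_2,\ldots$ be i.i.d.\ copies of the innovation law (continuous and symmetric), and set $V_0:=0$ and $V_k:=\sum_{\ell=1}^{k}\theta^{\ell}X_\ell$ for $k\ge 1$.

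The first ingredient consists of two elementary reductions. Multiplying $Y_m=\sum_{i=1}^{m}\theta^{m-i}X_i$ by the positive scalar $\theta^{1-m}$ leaves its sign unchanged, so $p_k(\theta)=\Prob[\sum_{i=1}^{m}\theta^{-(i-1)}X_i\ge 0,\ 1\le m\le k]$, and the same manipulation applied to the $1/\theta$-chain gives $p_j(1/\theta)=\Prob[\sum_{i=1}^{m}\theta^{i}X_i\ge 0,\ 1\le m\le j]$. Next, these are recast as statements about $V$. Writing $V_k-V_j=\sum_{\ell=j+1}^{k}\theta^{\ell}X_\ell=\theta^{k}\sum_{i=1}^{k-j}\theta^{-(i-1)}X_{k+1-i}$ (change of index $\ell=k+1-i$) and using that $(X_k,\ldots,X_1)\eqdist(X_1,\ldots,X_k)$, the event $\{V_k>V_j\text{ for all }0\le j<k\}$ has probability exactly $p_k(\theta)$. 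Similarly $V_{k+m}-V_k=\theta^{k}\sum_{l=1}^{m}\theta^{l}X_{k+l}$, and since $(-X_{k+1},\ldots,-X_n)\eqdist(X_1,\ldots,X_{n-k})$ by symmetry, the event $\{V_k>V_j\text{ for all }k<j\le n\}$ has probability $\Prob[\sum_{l=1}^{m}\theta^{l}X_l\ge 0,\ 1\le m\le n-k]=p_{n-k}(1/\theta)$.

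With these in hand the conclusion is immediate. Because the innovation law is continuous, every $V_j-V_i$ with $i<j$ has an atomless law, so almost surely the numbers $V_0,\ldots,V_n$ are pairwise distinct and attain their maximum at a unique index $\kappa\in\{0,\ldots,n\}$. For each $k$ the event $\{\kappa=k\}$ agrees, up to a null set, with $\{V_k>V_j\text{ for }j<k\}\cap\{V_k>V_j\text{ for }j>k\}$; the first event is a function of $X_1,\ldots,X_k$ and the second of $X_{k+1},\ldots,X_n$, so they are independent, and the two reductions give $\Prob[\kappa=k]=p_k(\theta)\,p_{n-k}(1/\theta)$. Summing over $k$ yields $\sum_{k=0}^{n}p_k(\theta)\,p_{n-k}(1/\theta)=\sum_{k=0}^{n}\Prob[\kappa=k]=1$, and the generating-function form follows at once because $0\le p_n(\theta),p_n(1/\theta)\le 1$ makes both power series absolutely convergent on $(-1,1)$.

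\textbf{Main obstacle.} The only real work lies in the two reductions of the second paragraph: one must reverse the innovations in the pre-maximum block (legitimate since they are i.i.d.) and negate them in the post-maximum block (legitimate since they are symmetric), while tracking the geometric weights $\theta^{\ell}$ versus $\theta^{-\ell}$ carefully enough that the two blocks deliver precisely $p_k(\theta)$ and $p_{n-k}(1/\theta)$ rather than some other pair. No compactness or spectral input is needed. As a sanity check, at $\theta=1$ the walk $V$ is the innovation random walk itself and the identity collapses to $\bigl(\sum_{n\ge 0}p_n(1)z^n\bigr)^{2}=(1-z)^{-1}$, i.e.\ $p_n(1)=\binom{2n}{n}4^{-n}$, which is exactly Sparre Andersen's universal formula for symmetric walks.
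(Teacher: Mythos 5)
Your proof is correct and takes a genuinely different route from the paper's. The paper establishes the recurrence $p_n(\theta)=\sum_{k=1}^{n}p_{n-k}(\theta)\bigl(p_{k-1}(1/\theta)-p_k(1/\theta)\bigr)$ by peeling off one coordinate integral at a time from the multivariate integral \eqref{MecGen}, and then telescopes. You instead run a Sparre Andersen style argument: embedding both families of persistence probabilities into the single weighted process $V_k=\sum_{\ell\le k}\theta^{\ell}X_\ell$, you show that the unique argmax $\kappa$ of $V_0,\ldots,V_n$ satisfies $\Prob[\kappa=k]=p_k(\theta)\,p_{n-k}(1/\theta)$ — the time-reversed pre-maximum block yields $p_k(\theta)$ via exchangeability, the sign-flipped post-maximum block yields $p_{n-k}(1/\theta)$ via symmetry, and the two blocks are independent because they involve disjoint sets of innovations — so the identity is just the total-probability decomposition $\sum_k\Prob[\kappa=k]=1$. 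The index bookkeeping in your two reductions checks out: $\theta^{1-m}Y_m$ correctly strips the drift, the substitution $\ell=k+1-i$ correctly gives $V_k-V_j=\theta^k\sum_{i=1}^{k-j}\theta^{-(i-1)}X_{k+1-i}$, and continuity is used exactly where needed (atomless differences $V_j-V_i$ for uniqueness of the maximum, and to pass between strict and weak inequalities). Your route is shorter, more conceptual, and makes the Sparre Andersen ancestry of the theorem transparent; the paper's recursive route has the advantage of running in visible parallel with the proof of Theorem~\ref{thm:main_2} for $\theta<0$, where a position-of-maximum argument would not adapt directly because of the alternating signs in the rescaled partial sums.
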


The proof of this result is given in Subsection \ref{sec:case_>1} and similar to the one of Theorem \ref{thm:main_2} by relying on a linear recurrence relation that is combined with a telescoping argument. Theorem \ref{thm:main_3} can also be viewed as an extension of the formula 
\begin{equation}\label{Sparre}
\sum_{n\ge 0}\, p_{n}(1) \,z^{n}\; =\; \frac{1}{\sqrt{1-z}}
\end{equation}
for every $z\in (-1,1)$ which goes back to Sparre Andersen in the case $\theta=1$, i.e., for ordinary random walks, see e.g.\ \cite[Eq.\,(2.3)]{AS15} and the references therein. Observe that the symmetry and the continuity of the increments are necessary for (\ref{Sparre}).

\begin{corollary}\label{cor:Thm 3}
If $\theta\ge 2$ and innovations are uniform on $[-1,1]$, then
\begin{equation}\label{eq:main_theorem_3}
p_{n}^{U}(\theta)\; =\; \frac{\hJ_{n+1}(1/\theta)}{2^{n}\, n!}
\end{equation}
for all $n\ge 0$, where the $\hJ_{n+1}({\theta})$ are defined by the identity
\begin{equation}\label{eq:modified_MR_bis}
\sum_{n\ge 0} \hJ_{n+1}({\theta})\, \frac{z^{n}}{n!}\; \fps\; \frac{1}{1-2z}
\Bigg(\sum_{n\ge 0} {J}_{n+1}({\theta})\,\frac{z^{n}}{n!}\Bigg)^{\!\!-1}
\end{equation}
and again polynomials in $\Z[X]$. Moreover, $\hJ_{1}(\theta):=1$ and $\hJ_{n+1}({\theta})$ is of degree $n(n-1)/2$ with order 0 coefficient $2^{n-1}n!$ and all other coefficients being negative.
\end{corollary}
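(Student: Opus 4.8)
The plan is to combine the two exact formulae already at our disposal: Theorem~\ref{thm:main_1}, which gives $p_{n}^{U}(\vth)=J_{n+1}(\vth)/(2^{n}n!)$ for $\vth\in[-1,\tfrac12]$, and the functional identity of Theorem~\ref{thm:main_3}, which applies here because the uniform law on $[-1,1]$ is continuous and symmetric and $\vth\ge 2>0$. Writing $r=1/\vth\in(0,\tfrac12]$, the hypothesis $\vth\ge 2$ forces $r\in(0,\tfrac12]$, so $p_{n}^{U}(1/\vth)=p_{n}^{U}(r)$ is covered by Theorem~\ref{thm:main_1}. First I would pass to generating functions: let $P_{\vth}(z)=\sum_{n\ge 0}p_{n}^{U}(\vth)z^{n}$ and $Q(z)=\sum_{n\ge 0}J_{n+1}(1/\vth)\,z^{n}/(2^{n}n!)=\sum_{n\ge 0}p_{n}^{U}(1/\vth)z^{n}$. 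Theorem~\ref{thm:main_3} reads $P_{\vth}(z)Q(z)=1/(1-z)$, hence $P_{\vth}(z)=\bigl((1-z)Q(z)\bigr)^{-1}$. Substituting $z\mapsto 2z$ in $Q$ turns $J_{n+1}(1/\vth)/(2^{n}n!)\,z^{n}$ into the exponential generating series: $Q(2z)=\sum_{n\ge0}J_{n+1}(1/\vth)\,z^{n}/n!$. Therefore, after the same rescaling, $\sum_{n\ge0}p_{n}^{U}(\vth)(2z)^{n}=\bigl((1-2z)\sum_{n\ge0}J_{n+1}(1/\vth)z^{n}/n!\bigr)^{-1}$, which is precisely the assertion that the left side equals $\sum_{n\ge0}\hJ_{n+1}(1/\vth)z^{n}/n!$ with $\hJ_{n+1}$ defined by \eqref{eq:modified_MR_bis}. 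Matching the coefficient of $z^{n}$ then yields $p_{n}^{U}(\vth)=\hJ_{n+1}(1/\vth)/(2^{n}n!)$, which is \eqref{eq:main_theorem_3}.

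It remains to verify the structural claims about the $\hJ_{n+1}$. That $\hJ_{n+1}\in\Z[X]$ follows because the series $\sum_{n\ge0}J_{n+1}(\vth)z^{n}/n!$ has constant term $J_{1}(\vth)=1$, so it is invertible in $\Z[\vth][[z]]$ (a unit times $1/(1-2z)\in\Z[[z]]$ stays in $\Z[\vth][[z]]$), and each coefficient, being a polynomial combination of the $J_{k}(\vth)$, is then cleared of denominators by the factorial normalization; alternatively one can invoke that $p_{n}^{U}(\vth)=\hJ_{n+1}(1/\vth)/(2^{n}n!)$ and that $2^{n}n!\,p_{n}^{U}(\vth)$ is a polynomial in $1/\vth$, as will be transparent from the integral representation. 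For $\hJ_{1}=1$ one just reads off the constant term of $((1-2z)\sum J_{n+1}z^{n}/n!)^{-1}$. The degree statement $\deg\hJ_{n+1}=n(n-1)/2$ and the sign of the coefficients I would extract by an induction using the recursion obtained from equating $\bigl(\sum_{n\ge0}J_{n+1}(\vth)z^{n}/n!\bigr)\bigl(\sum_{n\ge0}\hJ_{n+1}(\vth)z^{n}/n!\bigr)=1/(1-2z)$, i.e.
\[
\sum_{k=0}^{n}\binom{n}{k}J_{k+1}(\vth)\,\hJ_{n+1-k}(\vth)\;=\;2^{n}n!,\qquad n\ge 0.
\]
The $k=0$ term gives $\hJ_{n+1}(\vth)=2^{n}n!-\sum_{k=1}^{n}\binom{n}{k}J_{k+1}(\vth)\hJ_{n+1-k}(\vth)$; since $J_{k+1}(\vth)$ has positive coefficients and degree $k(k-1)/2$, and inductively $\hJ_{n+1-k}(\vth)$ has degree $(n-k)(n-k-1)/2$ with negative non-constant-term coefficients and order-$0$ coefficient $2^{n-k}(n-k)!$, each product $J_{k+1}\hJ_{n+1-k}$ has top degree $k(k-1)/2+(n-k)(n-k-1)/2\le n(n-1)/2$ with equality only at $k=1$ (where it equals $J_2\hJ_n$, with $J_2=1$, contributing the genuine degree-$n(n-1)/2$ term), so the highest-degree coefficient of $\hJ_{n+1}$ is minus that of $\hJ_{n}$, nonzero, pinning the degree; and the order-$0$ coefficient of $\hJ_{n+1}$ equals $2^{n}n!-\sum_{k=1}^{n}\binom{n}{k}\cdot 1\cdot 2^{n-k}(n-k)!$, which one checks telescopes to... — in fact the claim is simply that this order-$0$ coefficient is $2^{n-1}n!$, equivalently $\sum_{k=1}^{n}\binom{n}{k}2^{n-k}(n-k)!\,[\hJ_{n+1-k}]_0=2^{n-1}n!$, which follows from the induction hypothesis $[\hJ_{m}]_0=2^{m-2}(m-1)!$ for $2\le m\le n$ together with the identity $\sum_{k=1}^{n}n!/(k!)\,2^{n-1-k}=2^{n-1}(n!-1)\cdot$(something), to be confirmed by a short computation; and the negativity of all other coefficients is inherited because every term subtracted has nonnegative coefficients (as $J_{k+1}$ is positive and $-\hJ_{n+1-k}$ has nonnegative non-constant coefficients while its constant term contributes only to the order-$0$ coefficient of $\hJ_{n+1}$, already separated out).

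The main obstacle I anticipate is not the identity \eqref{eq:main_theorem_3} itself — that is a routine manipulation of the two results already proved — but rather the bookkeeping in the sign/degree/valuation analysis of $\hJ_{n+1}$, specifically ensuring that the order-$0$ coefficient comes out to exactly $2^{n-1}n!$ and that no cancellation destroys either the top-degree term or the strict negativity of the intermediate coefficients. The cleanest route is probably to sidestep the recursion partially: observe that $\hJ_{n+1}(\vth)/(2^{n}n!)=p_{n}^{U}(\vth^{-1})$ for $\vth^{-1}\in(0,\tfrac12]$ is, up to the known volume interpretation (Proposition~\ref{prop:MR_volume} applied to $J$), a ratio of polytope volumes, so positivity of $p_{n}^{U}$ together with the explicit generating-function inverse forces the constant term to equal $2^{n}n!\,p_{n}^{U}(0^{+})$-type boundary value; but since $p_{n}^{U}(\vth)\to p_{n}^{U}(0)=2^{-n}$ as $\vth\to 0^{+}$ along this regime fails to directly give $2^{n-1}n!$, the safest is to simply prove the coefficient identities $[\hJ_{n+1}]_{0}=2^{n-1}n!$ and $[\hJ_{n+1}]_{\max}=(-1)^{?}$ by the above induction and verify the base cases $n=0,1,2$ against the displayed list $\hJ_2=1,\ \hJ_3,\ldots$ — once the induction is set up the verification is mechanical, and I would relegate it to a short paragraph rather than belabor it.
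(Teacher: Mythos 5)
Your derivation of the identity \eqref{eq:main_theorem_3} from Theorems~\ref{thm:main_1} and~\ref{thm:main_3} is correct and in substance matches what the paper does (though the paper omits these details). The trouble is with the structural claims about $\hJ_{n+1}$, where you try to argue directly by induction from the recursion
\[
\hJ_{n+1}(\theta)\;=\;2^{n}n!\;-\;\sum_{k=1}^{n}\binom{n}{k}J_{k+1}(\theta)\,\hJ_{n+1-k}(\theta),
\]
and there are two genuine gaps there.

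First, the degree analysis is backwards. You write that $k(k-1)/2+(n-k)(n-k-1)/2\le n(n-1)/2$ with equality \emph{only at $k=1$}, and deduce that the top-degree term of $\hJ_{n+1}$ is minus the leading coefficient of $\hJ_{n}$. But $f(k)=k(k-1)/2+(n-k)(n-k-1)/2 = k^{2}-nk+n(n-1)/2$ is a convex parabola in $k$ minimized near $k=n/2$, so on $\{1,\dots,n\}$ its maximum is attained at $k=n$, where $f(n)=n(n-1)/2$, not at $k=1$, where $f(1)=(n-1)(n-2)/2$. Thus the degree-$n(n-1)/2$ term comes from $\binom{n}{n}J_{n+1}(\theta)\hJ_{1}(\theta)=J_{n+1}(\theta)$, and the leading coefficient of $\hJ_{n+1}$ is $-1$ (minus the leading coefficient of $J_{n+1}$, which is $1$) for all $n$. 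Your version would make the leading coefficient alternate in sign, which already fails for the listed $\hJ_{3},\hJ_{4},\hJ_{5},\hJ_{6}$, all of which have leading coefficient $-1$.

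Second, and more seriously, your sign argument for the non-constant coefficients is invalid. You assert that each $J_{k+1}\hJ_{n+1-k}$ has "nonnegative coefficients" once the constant term is separated off, because $J_{k+1}$ is positive and $-\hJ_{n+1-k}$ has nonnegative non-constant coefficients. But $\hJ_{n+1-k}$ has a large \emph{positive} constant term, so $J_{k+1}\hJ_{n+1-k}$ decomposes as $[\text{positive constant}]\cdot J_{k+1}\;-\;J_{k+1}\cdot[\text{positive non-constant poly}]$, which is a difference of two polynomials with positive coefficients; its non-constant coefficients are not of one sign, and the sum over $k$ involves genuine cancellation. So the induction does not close. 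The paper avoids this entirely by routing through the $\wJ_{n}$: from $\eqref{eq:modified_MR}$ and $\eqref{eq:modified_MR_bis}$ one gets $\hJ_{n+1}(\theta)/(2^{n}n!) = \sum_{k=0}^{n}(-1)^{k}\wJ_{k+1}(\theta)/(2^{k}k!)$, equivalently $\hJ_{n+1}=2n\hJ_{n}+(-1)^{n}\wJ_{n+1}$, and then degree, the constant term $2^{n-1}n!$, and negativity of the remaining coefficients all drop out directly from Proposition~\ref{Misc}, since $(-1)^{k-1}\wJ_{k+1}(\theta)=\theta^{k-1}\cdot(\text{poly with positive coefficients})$ has valuation $k-1\ge 1$ for $k\ge 2$. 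If you want to salvage your approach, you should replace the $J$--$\hJ$ recursion by this $\wJ$--$\hJ$ relation; your direct route would need a substantially finer estimate to control the cancellations.
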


For $n=2,\ldots,6$, the $\hJ_{n}(\theta)$ are as follows:
\begin{align*}
   \hJ_2(\theta) &\; =\; 1,\\
   \hJ_3(\theta) &\; =\;  4\,-\,\theta, \\ 
   \hJ_4(\theta) &\; =\;  24 \,-\,\lpa 6\theta\,+\,3\theta^{2}\,+\,\theta^3\rpa,\\
   \hJ_5(\theta) &\; = \; 192\, -\lpa 48\theta\, +\, 24\theta^{2}\, +\, 20\theta^3\,+\,10\theta^4\,+4\theta^5\,+\,\theta^6\rpa,\\
   \hJ_6(\theta)&\; = \;  1920\, -\, \lpa 480\theta\, +\, 240\theta^{2}\, +\, 200\theta^3\, +\, 160\,\theta^4\,+\,120\theta^5\, +\, 70\theta^6\right. \\ & \qquad\qquad\qquad\qquad \left. +\, 35\theta^7\,+\, 15\theta^8\, +\,5\theta^9\, +\, \theta^{10}\rpa.
\end{align*}

Corollary \ref{cor:Thm 3} is proved in Subsection \ref{SphJ} along with a further result on the first $k$ coefficients, for arbitrary $k\ge 0$, of the polynomial expansion of $p_{n}^{U}(\theta)$ for $n\ge k+1$ as a function of $1/\theta$. Namely, if $\theta\ge 2$, then these coefficients are independent of $n$ as asserted by Proposition \ref{MischMisch}. This in turn will lead to an explicit formula, see \eqref{Limes} in Subsection \ref{Asymp3}, for the all-time persistence probability 
\begin{equation*}
   \ell(\theta) \; = \; \lim_{n\to\infty} p^{\,U}_{n}(\theta)\; >\; 0,
\end{equation*}
and to a precise evaluation of the exponential rate of this convergence in terms of the first root of a certain meromorphic function, see Proposition~\ref{ExpQ}. In the more general case of continuous, symmetric innovations and for any $\theta>1$, we show in Proposition \ref{Hout} that the all-time persistence probability is also positive in the positive recurrent case. 

\vspace{.1cm}
For $\theta\in(\frac{1}{2},1)\cup (1,2)$, the quantities $p_{n}^{U}(\theta)$ appear as truncated Laurent series defined piecewise on a growing number of subintervals whose boundaries are generalized Fibonacci numbers and their inverses. The increasingly complicated formulae, which do not seem to have any combinatorial interpretation, are different on each Fibonacci subinterval of $(\frac{1}{2},1)$ or $(1,2)$; for two examples see Remarks \ref{MRGenLin}(a) and \ref{MRGL}(b). On the other hand, Figure~\ref{fig:plot_2345} shows the intriguing fact that the mapping $\theta\mapsto p_{n}^{U}(\theta)$ is apparently smoother at these boundaries than at the particular value $\theta = -1$ on the negative halfline.

\medskip

\begin{figure}
\begin{center}
\includegraphics[width=10cm]{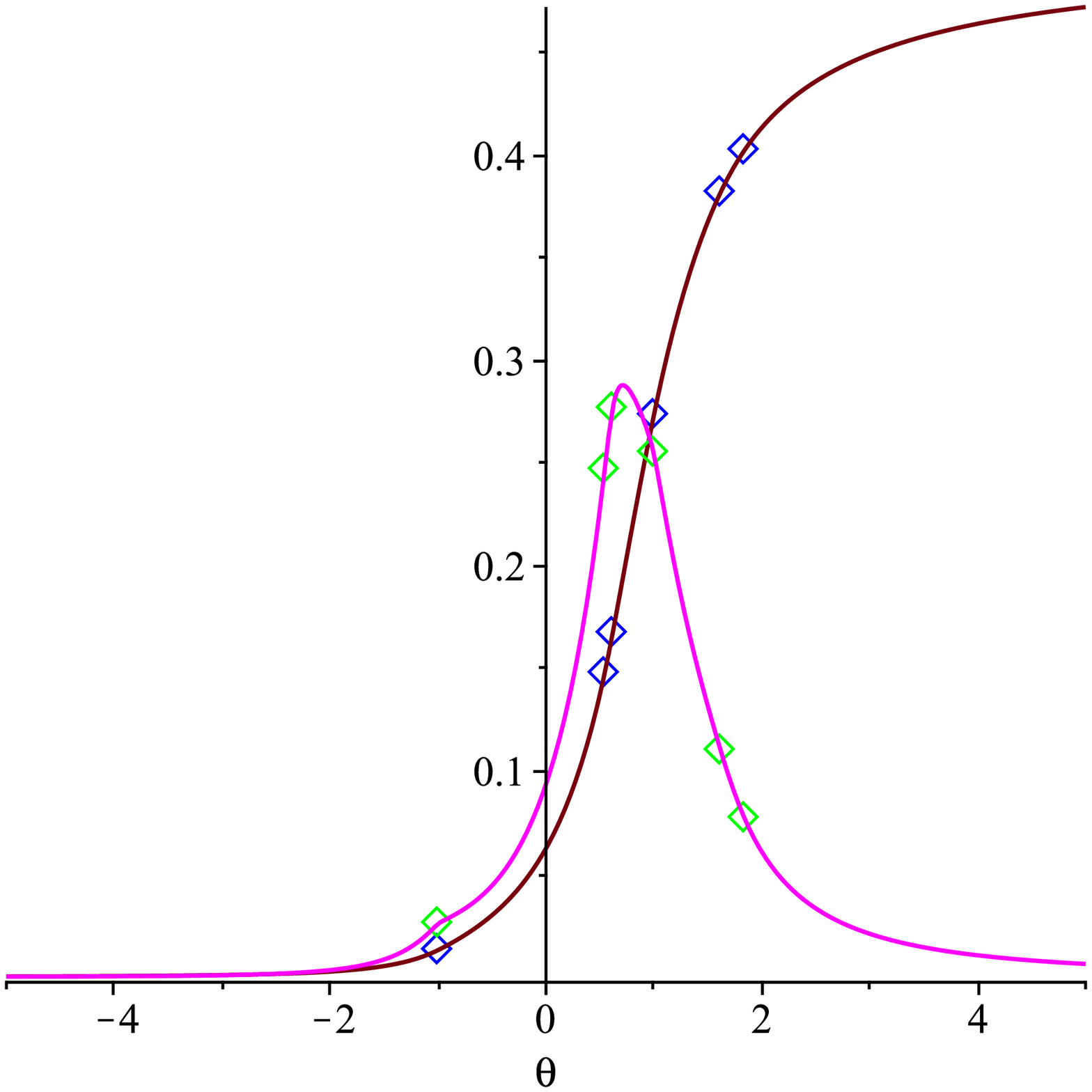}
\includegraphics[width=10cm]{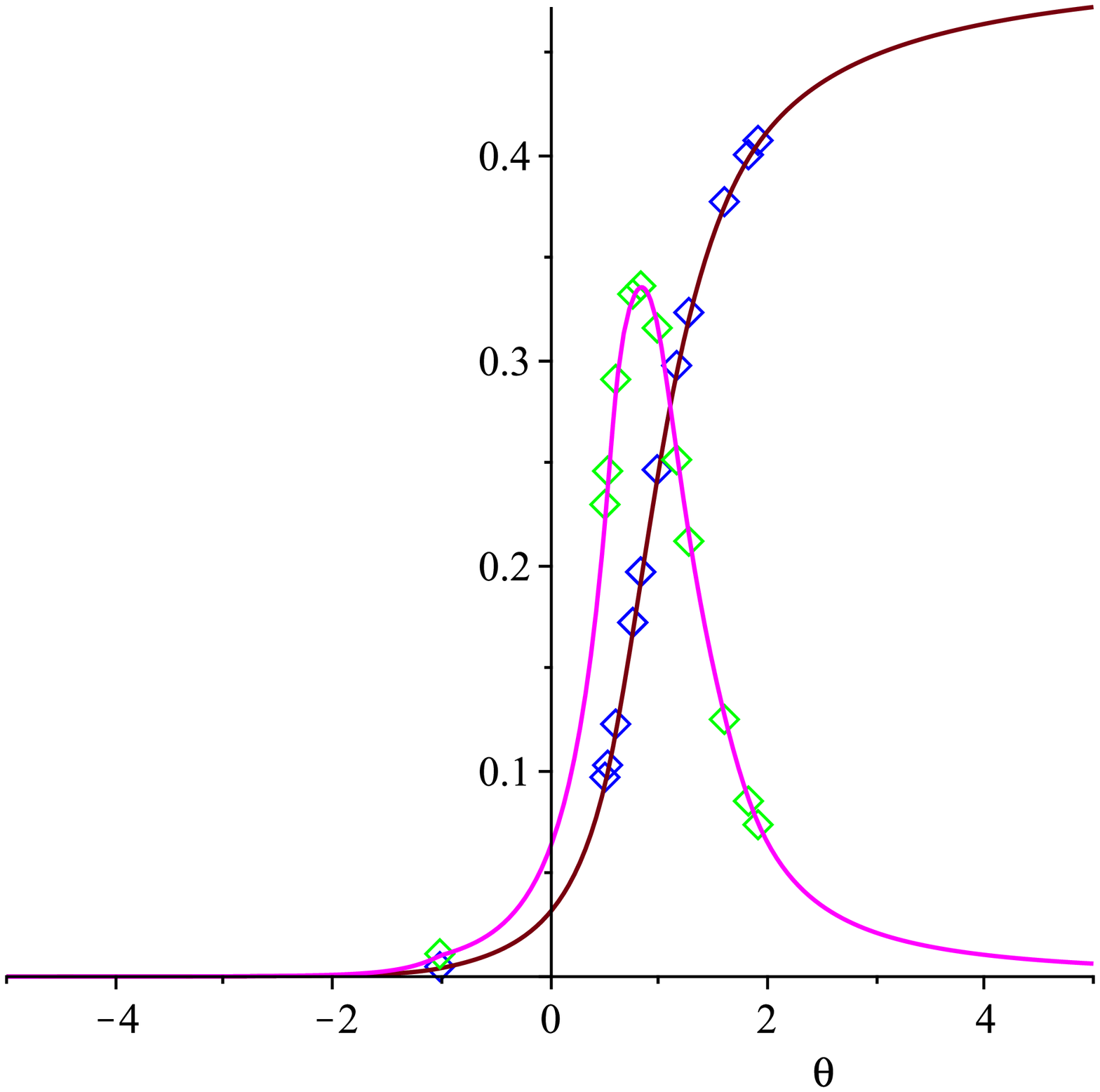}
\end{center}
\caption{The persistence probability $p_{n}(\theta)$ and its first derivative for $n=4,5$ and $\theta\in[-5,5]$. The blue points with positive abscissa indicate where the formula for $p_{n}(\theta)$ changes and correspond to the unique positive solutions to $\theta +\cdots +\theta^i = 1$ and to $1/\theta + \cdots + 1/\theta^i = 1$ for $i=1,\ldots, n-1$. At the blue point with negative abscissa $-1$, the first derivative of $p_{n}(\theta)$ is continuous, but the second derivative is not.}
\label{fig:plot_2345}
\end{figure}

The above discussion shows that for symmetric uniform innovations, the general and very simple factorization stated in Theorem~\ref{thm:main_3} for $\theta > 0$ leads to explicit expressions for the appearing coefficients in the case $\theta \in (0,\frac{1}{2}]$, owing to Theorem \ref{thm:main_1} and Corollary \ref{cor:Thm 3}, but does not in the remaining case $\theta \in (\frac{1}{2},1)$. More precisely, if there are explicit formulae, they will be much more complicated. In the case of biexponential symmetric innovations previously studied in \cite{Larr04} for $\theta\in (0,1)$, such a strong difference of complexity occurs as well, namely between the cases $\theta < 0$ and $\theta > 0$, see Remarks \ref{Expo} and \ref{MRGL}(a). As for the Wiener-Hopf factorization of random walks, it would be interesting to know if there are other AR(1) models with continuous symmetric innovations that have explicit factors in Theorems \ref{thm:main_2} and \ref{thm:main_3} given in terms of combinatorial objects as remarkable as the Mallows-Riordan polynomials.

\section{The case $\theta\in [-1,\frac{1}{2}]$}

\subsection{Proof of Theorem \ref{thm:main_1}}

\label{sec:proof_main_theorem}

By definition,
\begin{equation*}
   p_{n}^{U}(\theta)\ =\ \Prob[U_{1}>0,U_{2}+\theta U_{1}>0,\ldots,U_{n}+\theta U_{n-1}+\ldots+\theta^{n-1}U_{1}>0] 
\end{equation*}
for any $\theta\in\mathbb R$ and $n\ge 1$. For $\theta \ge 0$, this leads to the truncated integral formula
\begin{equation}
\label{Trunk}
p_{n}^{U}(\theta) \, =\, \frac{1}{2^{n}}\int_{0}^{1}\!\int_{-(1\wedge \theta u_{1})}^{1}\!\!\!\cdots\int_{-(1\wedge(\theta u_{n-1}+\cdots+\theta^{n-1}u_{1}))}^{1}du_{n}\ldots du_{2}\,du_{1}
\end{equation}
which remains valid for $\theta \in [-1,0)$ because $\theta u_{i-1}+\cdots+\theta^{i-1}u_{1}\in [-1,0]$ for each $i = 2,\ldots, n$ in the domain of integration of the $u_{i}$. For $\theta \in [-1,\frac{1}{2}]$, the crucial point is now, technically speaking, that $\theta u_{i-1}+\cdots+\theta^{i-1}u_{1}\in [-1,1]$ for every $i = 2,\ldots, n$ and thus all truncations in the domain of integration can be removed in (\ref{Trunk}), giving
\begin{equation}
\label{eq:why_theta<=1/2}
p_{n}^{U}(\theta) \ =\ \frac{1}{2^{n}}\int_{0}^{1}\!\int_{-\theta u_{1}}^{1}\!\!\!\cdots\int_{-(\theta u_{n-1}+\cdots+\theta^{n-1}u_{1})}^{1}du_{n}\ldots du_{2}\,du_{1},
\end{equation}
which will be our starting point. We mention that, if $\theta_{n}>\frac{1}{2}$ is the positive solution to $\theta + \cdots + \theta^{n-1} =1$, thus $\theta_{n}\downarrow\frac{1}{2}$ as $n\to\infty$, then \eqref{eq:why_theta<=1/2} actually remains true for $\theta \not\in [-1,\theta_{n}]$, but fails to be so for $\theta > \theta_{n}$, see Remark \ref{MRGenLin}(a) below. In the following, we put $\tp_{n}^{\,U}(\theta) = 2^{n}p_{n}^{U}(\theta)$.

\subsubsection{Proof via linear recurrence}
\label{subsec:proof_th1}

The following expression of the generating function of the  $\tp_{n}^{\,U}(\theta)$, as a ratio of two formal power series, forms the basis of the proof and is also of independent interest.

\begin{proposition}
\label{prop:recurrence_relation}
For every $\theta\in [-1,\frac{1}{2}]$, one has
\begin{equation}\label{eq:generating_recursion}
\sum_{n\ge 0} \tp_{n}^{\,U}(\theta)\, z^{n}\ \fps\ \frac{\displaystyle 1+\sum_{n\ge 2}\frac{(\theta+\cdots+\theta^{n-1})^{n}}{\theta^{n(n-1)/2}}\,\frac{z^{n}}{n!}}{\displaystyle 1-z+\sum_{n\ge 3}\frac{(\theta+\cdots+\theta^{n-2})^{n}}{\theta^{n(n-1)/2}}\,\frac{z^{n}}{n!}}\cdot
\end{equation}
\end{proposition}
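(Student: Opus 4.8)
The strategy is to extract from the iterated integral \eqref{eq:why_theta<=1/2} a linear recurrence relation of convolution type among the $\tp_{n}^{\,U}(\theta)$, and then translate that recurrence into the stated ratio of generating functions. First I would perform the innermost integration in \eqref{eq:why_theta<=1/2}: integrating $du_{n}$ over $[-(\theta u_{n-1}+\cdots+\theta^{n-1}u_{1}),1]$ contributes a factor $1+\theta u_{n-1}+\cdots+\theta^{n-1}u_{1}$. This suggests changing variables so as to decouple the nested linear forms. The natural substitution is $v_{i}:=u_{i}+\theta u_{i-1}+\cdots+\theta^{i-1}u_{1}$ for $i=1,\dots,n$, equivalently $v_{1}=u_{1}$ and $v_{i}=u_{i}+\theta v_{i-1}$, which has Jacobian $1$ and transforms the constraints into $v_{1}>0,\dots,v_{n}>0$ together with the upper bounds $u_{i}=v_{i}-\theta v_{i-1}\le 1$. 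So $\tp_{n}^{\,U}(\theta)$ becomes the volume of the region $\{v\in(0,\infty)^{n}: v_{1}\le 1,\ v_{i}-\theta v_{i-1}\le 1\ (2\le i\le n)\}$.

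Next I would look for the recurrence. The key observation is to condition on the \emph{first index $j$ at which the running constraint is "slack enough"} — more precisely, on the largest $j$ such that $v_{1},\dots,v_{j}$ all exceed a threshold forcing $v_{j+1}$ (and hence all later $v$'s, since $\theta\le\frac12$ keeps the forms in $[-1,1]$) to be unconstrained from below by the $u\le 1$ conditions; this is the combinatorial analogue of the "record" decomposition used by Gessel in \cite[Eq.\,(14.6)]{Gessel80}. Carrying out this decomposition, the block $v_{1},\dots,v_{j}$ contributes a factor of the form $(\theta+\cdots+\theta^{j-1})^{j}/(\theta^{j(j-1)/2}\,j!)$ — this is exactly the volume of a simplex-like region whose scaling accounts for the $\theta$-powers — while the remaining block $v_{j+1},\dots,v_{n}$ reproduces $\tp_{n-j}^{\,U}(\theta)$ up to a binomial factor $\binom{n}{j}$. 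Summing over the admissible range of $j$ yields a convolution identity of the shape
\begin{equation*}
\sum_{n\ge 0}\tp_{n}^{\,U}(\theta)\,\frac{z^{n}}{n!}\ \cdot\ \Bigg(1-z+\sum_{n\ge 3}\frac{(\theta+\cdots+\theta^{n-2})^{n}}{\theta^{n(n-1)/2}}\,\frac{z^{n}}{n!}\Bigg)\ \fps\ 1+\sum_{n\ge 2}\frac{(\theta+\cdots+\theta^{n-1})^{n}}{\theta^{n(n-1)/2}}\,\frac{z^{n}}{n!},
\end{equation*}
where the two polynomials-in-$z$ factors correspond, respectively, to the "tail" contributions and the "head" contributions of the block decomposition; dividing gives \eqref{eq:generating_recursion}. (One should double-check the degenerate low-order terms: $\tp_{0}^{\,U}=1$, $\tp_{1}^{\,U}=1$ fix the $1$ and the $-z$ in the denominator, and the $n=2$ numerator term matches $\tp_{2}^{\,U}(\theta)=1+\theta/2$ via $(\theta)^{2}/\theta\cdot\frac{1}{2!}=\theta/2$.)

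The main obstacle, I expect, will be pinning down precisely which nested constraints survive and which become vacuous under the hypothesis $\theta\in[-1,\tfrac12]$, so that the "head block" integral factors cleanly and evaluates to $(\theta+\cdots+\theta^{j-1})^{j}/(\theta^{j(j-1)/2}j!)$; this requires a careful induction on the block length together with the bound $|\theta u_{i-1}+\cdots+\theta^{i-1}u_{1}|\le 1$ already exploited in passing from \eqref{Trunk} to \eqref{eq:why_theta<=1/2}. An alternative, and possibly cleaner, route to the same recurrence is to differentiate $\tp_{n}^{\,U}(\theta)$ with respect to the position of the outermost variable $u_{1}$ (or to integrate by parts peeling off $u_{1}$), obtaining a relation expressing $\tp_{n}^{\,U}$ in terms of $\tp_{n-1}^{\,U},\dots,\tp_{1}^{\,U}$ with the stated $\theta$-polynomial coefficients; this bypasses the geometric block decomposition at the cost of a slightly more opaque coefficient bookkeeping. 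Either way, once the convolution identity is in hand, the passage to \eqref{eq:generating_recursion} is immediate formal-power-series algebra, and the comparison with \cite[Eq.\,(14.6)]{Gessel80} is what later feeds into the proof of Theorem \ref{thm:main_1}.
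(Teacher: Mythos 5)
Your overall plan (extract a convolution recurrence among the $\tp_n^{\,U}$ and divide formal power series) is the right one, and your low-order sanity checks are correct, but the primary block-decomposition route has concrete structural problems. First, the displayed convolution has $\sum_n\tp_n^{\,U}z^n/n!$ on the left, whereas the proposition involves the ordinary generating function $\sum_n\tp_n^{\,U}z^n$; this is not a cosmetic slip but the source of the $\binom{n}{j}$ you anticipate. Because the $\tp$-factor is an ordinary generating function, the recurrence encoded by \eqref{eq:generating_recursion} is
\[
\tp_n^{\,U}(\theta)-\tp_{n-1}^{\,U}(\theta)+\sum_{k=3}^n\frac{(\theta+\cdots+\theta^{k-2})^k}{\theta^{k(k-1)/2}\,k!}\,\tp_{n-k}^{\,U}(\theta)=\frac{(\theta+\cdots+\theta^{n-1})^n}{\theta^{n(n-1)/2}\,n!},\qquad n\ge 2,
\]
with no binomial coefficient; a decomposition of the integral that yields $\binom{n}{j}\cdot(\text{block})\cdot\tp_{n-j}^{\,U}$ cannot reproduce it. Second, the factor you assign to the head block of length $j$, namely $(\theta+\cdots+\theta^{j-1})^j/(\theta^{j(j-1)/2}j!)$, is the \emph{numerator} coefficient of \eqref{eq:generating_recursion}; the coefficients that actually multiply $\tp_{n-k}^{\,U}$ sit in the denominator and have top exponent $k-2$, not $k-1$, and nothing in your sketch accounts for that shift. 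Third, the ``record'' decomposition is never made precise, and it is hard to see how to make it so: in the region $\{0\le v_1\le 1,\ 0\le v_i\le 1+\theta v_{i-1}\}$ the lower bound on each $v_i$ is always $0$ and the upper-bound constraint never becomes vacuous, so there is no threshold index $j$ of the kind you describe. Finally, the signs in the recurrence (the $a_k$-terms enter with a minus) already preclude a decomposition into nonnegative volume blocks.

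The idea that does work is the one you gesture at as an alternative, but in the opposite direction: integrate by parts from the \emph{inside out}, not by peeling off $u_1$. The key simplification, missing from your plan, is to compute $\tq_n^{\,U}:=\tp_n^{\,U}-\tp_{n-1}^{\,U}$ rather than $\tp_n^{\,U}$ itself. Taking the difference collapses the innermost integral and leaves the integrand $v_{n-1}$ in an $(n-1)$-fold iterated integral. Writing $H_k(u)=u^k/k!$ (so $H_{k+1}'=H_k$), one then successively integrates $u_{n-1},u_{n-2},\dots$; each integration promotes $H_k$ to $H_{k+1}$ and extracts at the lower limit the constant $H_k(\theta+\cdots+\theta^{k-2})/\theta^{k(k-1)/2}$, multiplied by the remaining iterated integral, which is precisely $\tp_{n-k}^{\,U}$. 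This is where the $k-2$ shift comes from — it is the partial geometric sum accumulated in the integrand up to that step — and it produces exactly the recurrence above, whence \eqref{eq:generating_recursion} follows by multiplying by $z^n$ and summing. If you replace the block decomposition by this inside-out integration by parts applied to the difference $\tp_n^{\,U}-\tp_{n-1}^{\,U}$, your proof goes through.
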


\begin{proof}
We put $H_{n}(u)=u^{n}/n! = H_{n+1}'(u)$ for any $n\ge 0$ and rewrite \eqref{eq:why_theta<=1/2} as
\begin{equation*}
   \tp_{n}^{\,U}(\theta) \ =\ \int_{0}^{1}du_{1}\int_{-v_{1}}^{1}du_{2}\cdots\int_{-v_{n-1}}^{1}du_{n}, 
\end{equation*}
where $v_{0} = 0$ and $v_{i}=\theta (u_{i}+v_{i-1})$ for $i=1,\ldots, n-1$. Setting $\tq_{n}^{\,U}(\theta) = \tp^{\,U}_{n}(\theta)-\tp^{\,U}_{n-1}(\theta)$ for any $n\ge 2$ and $\tq_{1}(\theta) = 0$, we then see that 
\begin{align*}
\tq^{\,U}_{n}(\theta) \
&=\ \int_{-v_{0}}^{1}\int_{-v_{1}}^{1}\cdots\int_{-v_{n-2}}^{1}H_{1}(v_{n-1})\ du_{1}\cdots\,du_{n-1}\\
&=\ \frac{1}{\theta}\int_{-v_{0}}^{1}\int_{-v_{1}}^{1}\!\!\!\cdots\int_{-v_{n-3}}^{1}H_{2}(\theta+\theta v_{n-2})\ du_{1}\cdots\,du_{n-2}
\end{align*}
for each $n\ge 3$, and $\tq^{\,U}_2(\theta) = \theta/2$. Further partial integration provides
\begin{align*}
\tq^{\,U}_{n}(\theta)\
&=\ \frac{1}{\theta}\int_{-v_{0}}^{1}\int_{-v_{1}}^{1}\!\!\!\cdots\int_{-v_{n-3}}^{1}H_{2}(\theta+\theta^{2}(u_{n-2}+v_{n-3}))\ du_{1}\cdots\,du_{n-2}\\
&=\ \frac{1}{\theta^{3}}\int_{-v_{0}}^{1}\int_{-v_{1}}^{1}\!\!\!\cdots\int_{-v_{n-4}}^{1}\!\!\!\!\! (H_{3}(\theta+\theta^{2}(1+v_{n-3}))-H_{3}(\theta))\ du_{1}\cdots\,du_{n-3}\\
&=\ -\,\frac{H_{3}(\theta)\,\tp_{n-3}^{\,U}(\theta)}{\theta^{3}}\\
&\quad+ \ \frac{1}{\theta^{3}}\int_{-v_{0}}^{1}\int_{-v_{1}}^{1}\!\!\!\cdots\int_{-v_{n-4}}^{1}\!\!\!\!\!\! H_{3}(\theta+\theta^{2}+\theta^{3}(u_{n-3}+v_{n-4}))\ du_{1}\cdots\,du_{n-3}\\
&\;\;\vdots\hspace{3cm}\vdots\\
&=\ -\,\sum_{k=3}^{n-1}\frac{H_{k}(\theta+\cdots+\theta^{k-2})\,\tp_{n-k}^{\,U}(\theta)}{\theta^{k(k-1)/2}}\\
&\quad+\ \frac{1}{\theta^{(n-1)(n-2)/2}}\int_{0}^{1}H_{n-1}(\theta+\cdots+\theta^{n-2}+\theta^{n-1}u_{1})\ du_{1}\\
&=\ -\,\sum_{k=3}^{n}\frac{H_{k}(\theta+\cdots+\theta^{k-2})\,\tp_{n-k}^{\,U}(\theta)}{\theta^{k(k-1)/2}}\,+\,\frac{H_{n}(\theta+\cdots+\theta^{n-1})}{\theta^{n(n-1)/2}}.
\end{align*}
Consequently, $\tp_{1}^{\,U}(\theta)=\tp_{0}^{\,U}(\theta) =1,\,\tp_{2}^{\,U}(\theta)=\tp_{1}^{\,U}(\theta)+\theta/2$, and 
\begin{equation*}
   \tp_{n}^{\,U}(\theta)\; =\; \tp_{n-1}^{\,U}(\theta)\, +\,\frac{H_{n}(\theta+\cdots+\theta^{n-1})}{\theta^{n(n-1)/2}}\,-\,\sum_{k=3}^{n}\frac{H_{k}(\theta+\cdots+\theta^{k-2})\,\tp_{n-k}^{\,U}(\theta)}{\theta^{k(k-1)/2}} 
\end{equation*}
for any $n\ge 3$. Multiplication by $z^{n}$ and subsequent summation over $n$ finally leads to the following identity between formal power series:
\begin{align*}
(1-z)\,&\sum_{n\ge 0}\, \tp_{n}^{\,U}(\theta)\, z^{n}\\
&  \fps\ 1\ +\ \sum_{n\ge 2}\frac{H_{n}(\theta+\cdots+\theta^{n-1})\, z^{n}}{\theta^{n(n-1)/2}}\\
& \qquad\;\; -\ \sum_{n\ge 3}\sum_{k=3}^{n} \left(\frac{H_{k}(\theta+\cdots+\theta^{k-2})\, z^k}{\theta^{k(k-1)/2}}\right) \tp_{n-k}^{\,U} z^{n-k}\\
&  \fps\ 1\ +\ \sum_{n\ge 2}\frac{H_{n}(\theta+\cdots+\theta^{n-1})\, z^{n}}{\theta^{n(n-1)/2}}\\
& \qquad\;\; -\ \left(\sum_{n\ge 0} \tp_{n}^{\,U}(\theta)\, z^{n}\right)\left(\sum_{n\ge 3}\frac{H_{n}(\theta+\cdots+\theta^{n-2})\, z^{n}}{\theta^{n(n-1)/2}}\right),
\end{align*}
and this is easily seen to be equivalent to \eqref{eq:generating_recursion}. 
\qed
\end{proof}

\begin{proof}[of Theorem \ref{thm:main_1}] 
Differentiation with respect to $z$ of Eqs.\,\eqref{id:MR} and \eqref{eq:generating_recursion} provides
\begin{gather}
\label{eq:to_prove-MR}
\sum_{n\ge 0} (\theta-1)^{n} J_{n+1}(\theta)\frac{z^{n}}{n!}\ \fps\ \frac{\displaystyle\sum_{n\ge 0}\theta^{n(n+1)/2}\, \frac{z^{n}}{n!}}{\displaystyle\sum_{n\ge 0}\theta^{n(n-1)/2}\, \frac{ z^{n}}{n!}}
\shortintertext{and}
\label{eq:to_prove-0}
\sum_{n\ge 0} (\theta -1)^{n} \tp_{n}^{\,U}(\theta)\, z^{n}\ \fps\ \frac{\displaystyle \sum_{n\ge 0}\frac{(\theta^{n}-\theta)^{n}}{\theta^{n(n-1)/2}}\,\frac{z^{n}}{n!}}{\displaystyle \sum_{n\ge 0}\frac{(\theta^{n-1}-\theta)^{n}}{\theta^{n(n-1)/2}}\,\frac{z^{n}}{n!}},
\end{gather}
respectively, whence it is enough to show identity of the two ratios of formal power series on the right-hand sides. Equivalently,
\begin{multline*}
\Bigg(\sum_{n\ge 0}\frac{(\theta^{n}-\theta)^{n}}{\theta^{n(n-1)/2}}\,\frac{z^{n}}{n!}\Bigg)\Bigg(  \sum_{n\ge 0} \theta^{n(n-1)/2}\,\frac{z^{n}}{n!}\Bigg) \\ 
\qquad \fps \ \Bigg( \sum_{n\ge 0}\theta^{n(n+1)/2}\,\frac{z^{n}}{n!}\Bigg)\Bigg( \sum_{n\ge 0}\frac{(\theta^{n-1}-\theta)^{n}}{\theta^{n(n-1)/2}}\frac{z^{n}}{n!}\Bigg)
\end{multline*}
must be verified, that is, upon comparison of coefficients,
\begin{equation*}
   \sum_{k=0}^{n}\binom{n}{k} (\theta^{k-1} - \theta)^k\theta^{-n(k-1)}\;
= \; \sum_{k=0}^{n}\binom{n}{k}(\theta^{k} - \theta)^{k}\theta^{-k(n-1)} 
\end{equation*}
for each $n\ge 0$. To this end, we finally note that
\begin{align*}
\sum_{k=0}^{n}&\binom{n}{k} (\theta^{k-1}-\theta)^k\theta^{-n(k-1)}\\
&=\ \sum_{k=0}^{n}\binom{n}{k}\theta^{-n(k-1)}\sum_{\ell=0}^{k}\binom{k}{\ell} (-1)^{k-\ell}\theta^{k-\ell + \ell(k-1)}\\
&=\ \sum_{l=0}^{n}\sum_{k=\ell}^{n}\binom{n}{k}\binom{k}{\ell} (-1)^{k-\ell}\theta^{k-\ell + (\ell-n)(k-1)}\\
&=\ \sum_{\ell'=0}^{n}\sum_{k'=0}^{\ell'}\binom{n}{\ell'}\binom{\ell'}{k'}(-1)^{\ell' - k'}\theta^{\ell' - k' +\ell'(1+k'-n)}\\
&=\ \sum_{k=0}^{n}\binom{n}{k}\theta^{-k(n-1)}\sum_{\ell=0}^{k}\binom{k}{\ell} (-1)^{k-\ell}\theta^{k-\ell + \ell k}\\
&=\ \sum_{k=0}^{n}\binom{n}{k}(\theta^{k} - \theta)^{k}\theta^{-k(n-1)}
\end{align*}
where we have set $k' = n-k,\,\ell' = n-\ell$ in the fourth line and made the change of variable $(\ell', k')\mapsto (k,l)$ in the fifth line.\qed
\end{proof}

\begin{Rem} 
\label{MRGenLin}
{\em (a) It follows from the point made after \eqref{eq:why_theta<=1/2} at the beginning of this section that, for any fixed $n\ge 2$, the statement of Theorem \ref{thm:main_1} extends to all $\theta \in (\frac{1}{2},\theta_{n}]$ if $\theta_{n} > \frac{1}{2}$ denotes the positive solution to $\theta +\cdots + \theta^{n-1} = 1$, which has been coined generalized Fibonacci number in the literature, see \cite{W98} and the references therein. On the other hand, the formula becomes different for $\theta > \theta_{n}$. For example, for $n=3$ and $\theta\in (\theta_3,1)$, it can be shown that
\begin{equation*}
   \tp_3^{\,U}(\theta)\; =\; \theta \, +\,\frac{11}{6}\, -\, \frac{1}{2\theta^{2}}\, +\, \frac{1}{6\theta^3}\cdot
   \end{equation*}
Except for the Sparre Andersen formula at $\theta = 1$, expressing $\tp_{n}^{\,U}(\theta)$ as a function of $\theta$ for $\theta \in (\theta_{n},1/\theta_{n})$ becomes very complicated with growing $n$ and does not seem to permit a nice combinatorial formulation. An intriguing fact is that, despite these complications, the mapping $\theta\to\tp_{n}^{\,U}(\theta)$ seems to maintain a certain degree of smoothness for $\theta\in [\frac{1}{2},2]$ and any $n\ge 1$, see Figure~\ref{fig:plot_2345}.\\

(b) As a by-product of \eqref{eq:generating_recursion}, we obtain the following apparently new formula for the generating function of Mallows-Riordan polynomials as a ratio of two (for any $\theta\not\in\{-1,0,+1\}$ divergent) formal power series:
\begin{equation*}
   \sum_{n\ge 0} J_{n+1}(\theta) \frac{z^{n}}{n!}\ \fps \ \frac{{\displaystyle 1 + \sum_{n \ge 2} \frac{(\theta + \cdots + \theta^{n-1})^{n}}{\theta^{n(n-1)/2}}\,\frac{z^{n}}{n!}}}{{\displaystyle 1 - z + \sum_{n \ge 3} \frac{(\theta + \cdots + \theta^{n-2})^{n}}{\theta^{n(n-1)/2}}\,\frac{z^{n}}{n!}}}. 
   \end{equation*}
The formula corresponds to the linear recurrence
\begin{align*}
J_{n+1}(\theta)\ = \ &  \frac{(\theta+\ldots+\theta^{n-1})^{n}}{\theta^{n(n-1)/2}} \, + \, n J_{n}(\theta)\\
&\qquad\quad -\,\sum_{k=3}^{n} \binom{n}{k}\frac{(\theta+\ldots+\theta^{k-2})^k}{\theta^{k(k-1)/2}}\,J_{n-k+1}(\theta),
\end{align*}
valid for any $n\ge 2$, of $J_{n+1}(\theta)$ in terms of $J_{k}(\theta)$ for $k\le n$, with the notable curiosity that $J_{n-1}(\theta)$ does not appear. These formulae should be compared with the following identity due to Gessel, see Formula (14.6) in \cite{Gessel80}:
\begin{equation*}
   \sum_{n\ge 0} J_{n+1}(\theta) \,\frac{z^{n}}{n!}\; \fps \;  \frac{{\displaystyle \sum_{n \ge 0} \frac{(1+\theta + \cdots + \theta^{n})^{n}}{\theta^{n(n+1)/2}}\,\frac{z^{n}}{n!}}}{{\displaystyle \sum_{n \ge 0} \frac{(1+\theta + \cdots + \theta^{n-1})^{n}}{\theta^{n(n+1)/2}}\,\frac{z^{n}}{n!}}} 
   \end{equation*}
and the corresponding linear recurrence
\begin{equation*}
   \sum_{k=0}^{n}\binom{n}{k} \frac{(\theta-1)^{k}J_{k+1}(\theta)}{\theta^{k(k-1)/2-kn}}
\frac{(\theta^{n-k}-1)^{n-k}}{(\theta^{n+1}-1)^{n}}
\; =\; 1.
\end{equation*}
}
\end{Rem}

\subsubsection{Proof via polytope volumes}
\label{subsec:proof_th1_b}

This proof relies on the following expression of Mallows-Riordan polynomials as algebraic volumes, which is also of independent interest, see Section \ref{SoliTutti} below.

\begin{proposition}
\label{prop:MR_volume}
For every $n\ge 1$, one has the polynomial identity
\begin{equation}\label{eq:Alin's conj}
  \int_{1}^{\theta}\int_{1}^{\theta x_{1}}\cdots\;\int_{1}^{\theta x_{n-1}}dx_{1}\ldots dx_{n}  \ =\ \frac{(\theta-1)^{n} J_{n+1}(\theta)}{n!}\cdot 
\end{equation}
\end{proposition}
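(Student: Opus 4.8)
The plan is to prove \eqref{eq:Alin's conj} by establishing that both sides satisfy the same recursion in $n$ and the same initial condition. Write $I_n(\theta)$ for the iterated integral on the left-hand side. First I would record the base case: for $n=1$, $I_1(\theta)=\int_1^\theta dx_1=\theta-1=(\theta-1)^1 J_2(\theta)/1!$ since $J_2(\theta)=1$. Then I would set up a recursion by peeling off the \emph{innermost} integration. Observe that the innermost variable is $x_n$, integrated over $[1,\theta x_{n-1}]$, contributing a factor $\theta x_{n-1}-1$; but this couples to the next integral, so it is cleaner to peel from the outside. Actually the natural move is: substitute $x_1=\theta y_1$, then the outer integral becomes $\theta\int_{1/\theta}^{1}(\cdots)\,dy_1$ with the remaining integrals over $[1,\theta^2 y_1]$, and iterating such substitutions is exactly the kind of multivariate change of variable the authors announced for their second proof of Theorem~\ref{thm:main_1}. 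This suggests the cleanest route is to connect $I_n(\theta)$ directly to $\tp_n^{\,U}(\theta)$.

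Concretely, the key step I would carry out is the change of variables relating \eqref{eq:Alin's conj} to \eqref{eq:why_theta<=1/2}. Starting from
\[
\tp_n^{\,U}(\theta)\;=\;\int_0^1\!\!\int_{-\theta u_1}^1\!\!\cdots\int_{-(\theta u_{n-1}+\cdots+\theta^{n-1}u_1)}^1 du_n\cdots du_1,
\]
introduce the partial sums $w_i = u_i+\theta u_{i-1}+\cdots+\theta^{i-1}u_1$ (so $w_1=u_1$, $w_i=u_i+\theta w_{i-1}$), which are precisely the coordinates whose positivity is being tested. Under $u_i\mapsto w_i$ the Jacobian is triangular with unit diagonal, so it is $1$; the domain becomes $w_1\ge 0$ and $w_i > 0$ with $u_i = w_i-\theta w_{i-1}\le 1$, i.e. $w_i \le 1+\theta w_{i-1}$. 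Thus
\[
\tp_n^{\,U}(\theta)\;=\;\int_0^{?}\!\!\int_0^{1+\theta w_1}\!\!\cdots\int_0^{1+\theta w_{n-1}} dw_n\cdots dw_1,
\]
with the first variable running over $[0,1]$ (from $u_1\in[0,1]$). A final affine substitution $w_i = (x_i-1)/(\theta-1)$, which sends $0\mapsto 1$ and $1+\theta w_{i-1}\mapsto \theta x_{i-1}$ (one checks $1+\theta\cdot\frac{x-1}{\theta-1} = \frac{\theta x - 1}{\theta-1}$), contributes a Jacobian factor $(\theta-1)^{-n}$ and turns the region into $x_i\in[1,\theta x_{i-1}]$ with $x_1\in[1,\theta]$. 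Hence $\tp_n^{\,U}(\theta) = (\theta-1)^{-n} I_n(\theta)$, and combining with Theorem~\ref{thm:main_1}, which gives $\tp_n^{\,U}(\theta) = J_{n+1}(\theta)/n!$, yields \eqref{eq:Alin's conj}.

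Alternatively — and this is perhaps the more self-contained route, since it avoids invoking Theorem~\ref{thm:main_1} and instead supports an independent third proof of it — I would prove \eqref{eq:Alin's conj} directly by showing $I_n(\theta)$ obeys the Mallows-Riordan recursion. From the substitution $x_1=\theta y_1$ one gets $I_n(\theta) = \theta\int_{1/\theta}^1\big(\int_1^{\theta^2 y_1}\cdots\big)dy_1$, and more generally one can derive a convolution-type identity by splitting the range of integration, matching \eqref{recMR} after multiplying through by $(\theta-1)^n/n!$; since the statement is a polynomial identity in $\theta$, it suffices to verify it for $\theta$ in an interval, e.g. $\theta\in(1,\infty)$ where all the integrals are over genuine intervals with positive length. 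The main obstacle is bookkeeping: keeping the nested integration limits, the Jacobians, and the shifting index ranges consistent through the chain of substitutions. The structural content is routine once the correct substitution $w_i = u_i + \theta w_{i-1}$ followed by the affine map to the $x_i$ is identified; I would present the computation as a single clean change of variables rather than an induction, flagging that the polynomial nature of both sides lets us restrict to $\theta>1$ where every integral is honest.
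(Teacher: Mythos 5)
Your main route is correct as a matter of logic, but it goes in exactly the opposite direction from the paper, and that direction matters for the paper's purposes. You establish the change of variables $u_i\mapsto w_i=u_i+\theta u_{i-1}+\cdots+\theta^{i-1}u_1$ (unit Jacobian) followed by the affine map $w_i\mapsto x_i=1+(\theta-1)w_i$, yielding
\begin{equation*}
\tp_n^{\,U}(\theta)\;=\;(\theta-1)^{-n}\,I_n(\theta)\qquad\text{for }\theta\in[-1,\tfrac12],
\end{equation*}
and then invoke Theorem~\ref{thm:main_1} to conclude. This is valid \emph{provided} Theorem~\ref{thm:main_1} is already known by an independent argument (in the paper, the linear-recurrence proof of Subsection~\ref{subsec:proof_th1}). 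But the whole point of Proposition~\ref{prop:MR_volume} in the paper is to serve as the engine of a \emph{second}, self-contained proof of Theorem~\ref{thm:main_1}: the paper first proves the volume identity \eqref{eq:Alin's conj} without any reference to persistence probabilities (via a homothetic rescaling to $q_n(\theta)$, a telescoping recursion $q_n+\theta^{-n}q_{n-1}=\cdots$, and matching the resulting finite sum against a Cauchy-product identity extracted from the generating function of the $J_n$), and only afterwards uses exactly your change of variables — in the reverse direction — to deduce Theorem~\ref{thm:main_1}. So your proof recycles the easy half of the paper's Subsection~\ref{subsec:proof_th1_b} and skips the substantive half; it cannot power an independent proof of Theorem~\ref{thm:main_1} because it presupposes it.

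Two smaller points. First, you should say explicitly that your main route only directly establishes \eqref{eq:Alin's conj} for $\theta\in[-1,\tfrac12]$, since \eqref{eq:why_theta<=1/2} is only valid there, and that the extension to all $\theta$ follows because both sides are polynomials; you mention polynomiality only in your alternative route, and there you propose verifying on $\theta\in(1,\infty)$, which is not the interval on which your change-of-variables argument applies. Second, the ``alternative route'' of matching \eqref{recMR} directly is plausible-sounding but not carried out, and it is not what the paper does either: the paper matches a different identity (the coefficient-level consequence of $J(\theta,z)\cdot\sum\theta^{n(n-1)/2}z^n/n!\fps\sum\theta^{n(n+1)/2}z^n/n!$), not the Mallows--Riordan convolution recursion itself, and the step from your sketch to an actual proof is where the real work lies.
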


\begin{proof}
A homothetic change of variable provides
\begin{gather*}
\int_{1}^{\theta}\int_{1}^{\theta x_{1}}\cdots\;\int_{1}^{\theta x_{n-1}}dx_{1}\cdots dx_{n} \; = \; \theta^{n(n+1)/2} q_{n}(\theta),
\shortintertext{where}
q_{n}(\theta)\; =\; \int_{1/\theta}^{1}\int_{1/\theta^{2}}^{u_{1}}\cdots\;
\int_{1/\theta^{n}}^{u_{n-1}}du_{1}\ldots du_{n}
\end{gather*}
for $n\ge 1$. Next observe that
\begin{equation*}
   q_{n}(\theta) \; +\; \frac{q_{n-1}(\theta)}{\theta^{n}}\; =\; \int_{1/\theta}^{1}\int_{1/\theta^{2}}^{u_{1}}\cdots\;\int_{1/\theta^{n-1}}^{u_{n-2}} u_{n-1}\, du_{1}\ldots du_{n-1}
   \end{equation*}
and then, upon repeated summation,
\begin{equation*}
   \sum_{k=1}^{n}\frac{q_{k}(\theta)}{(n-k)!\,\theta^{(k+1)(n-k)}}\; = \; \frac{1}{(n-1)!} \int_{1/\theta}^{1} u_{1}^{n-1} du_{1}\; =\; \frac{1}{n!}\left(1-\frac{1}{\theta^{n}}\right). 
   \end{equation*}
Setting $\tq_{0}(\theta) = 1$ and 
\begin{equation*}
   \tq_{k}(\theta) \; =\; \frac{k!\, \theta^{k(k+1)/2} q_{k}(\theta)}{(\theta - 1)^k} 
   \end{equation*}
for $k\ge 1$, we obtain
\begin{equation}\label{eq:id_to_compare}
\sum_{k=0}^{n}\binom{n}{k}\frac{(\theta-1)^{k}\tq_{k}(\theta)}{\theta^{(k+1)(n-k/2)}} \ =\ 1
\end{equation}
for any $n\ge 0$. On the other hand, it follows from \eqref{eq:to_prove-MR} that
\begin{equation*}
   \lpa \sum_{n\ge 0} (\theta-1)^{n} J_{n+1}(\theta)\frac{z^{n}}{n!}\rpa \lpa \sum_{n\ge 0}\theta^{n(n-1)/2}\,\frac{z^{n}}{n!}\rpa\; \fps \; \sum_{n \ge 0}\theta^{n(n+1)/2}\, \frac{z^{n}}{n!}
   \end{equation*}
and therefore, by using Cauchy's product and comparing coefficients,
\begin{equation*}
   \sum_{k=0}^{n}\binom{n}{k}\frac{(\theta-1)^{k}J_{k+1}(\theta)}{\theta^{(k+1)(n-k/2)}} \ =\ 1 
   \end{equation*}
for any $n\ge 0$. As $\tq_{0}(\theta) = J_{1}(\theta) = 1$, we finally infer $\tq_{n}(\theta) = J_{n+1}(\theta)$ for each $n\ge 0$ from \eqref{eq:id_to_compare}, which completes the argument.\qed
\end{proof}

\begin{proof}[of Theorem \ref{thm:main_1}]
Embarking on \eqref{eq:why_theta<=1/2}, put $r = -1/\theta$ and make the changes of variables $y_{i+1}=x_{i+1} -\theta x_{i}$ (second line) and $z_{i+1} = r^i y_{i+1}$ (third line), both for $i=1,\ldots, n-1$, to obtain
\begin{align*}
\tp_{n}^{\,U}(\theta)\ &=\ \int_{0}^{1}\int_{-\theta x_{1}}^{1}\cdots\;\int_{-(\theta x_{n-1}+\cdots+\theta^{n-1}x_{1})}^{1}dx_{1}\ldots dx_{n}\\
&=\ \int_{0}^{1}\int_{0}^{1+\theta y_{1}}\cdots\;\int_{0}^{1+\theta y_{n-1}}\!\!\!dy_{1}\ldots dy_{n}\\
&=\ r^{-n(n-1)/2} \int_{0}^1 \int_{0}^{r-z_{1}} \cdots\; \int_{0}^{r^{n-1}-z_{n-1}}\!\!\! dz_{1}\ldots dz_{n}.
\end{align*}
By yet another change of variables, namely
\begin{equation*}
   t_{i}\; = \; 1\, -\, \frac{(r+1)z_{i}}{r^i} 
   \end{equation*} 
for $i=1,\ldots,n$, we arrive at 
\begin{equation*}
   \tp_{n}^{\,U}(\theta) \; = \; \lpa\frac{-r}{r+1}\rpa^{n} \int_{1}^{-1/r} \int_{1}^{-t_{1}/r}\cdots\;  \int_{1}^{-t_{n-1}/r}\!\! dt_{1}\ldots dt_{n} \; = \; \frac{J_{n+1}(\theta)}{n!} 
   \end{equation*}
as required, where Proposition \ref{prop:MR_volume} has been used for the last equality.
\qed
\end{proof}

\begin{Rem}\label{Inc1}
\rm It follows from the formula 
\begin{equation*}
   \tp_{n}^{\,U}(\theta) \; = \;\int_{0}^{1}\int_{0}^{1+\theta y_{1}}\cdots\;\int_{0}^{1+\theta y_{n-1}}\!\!\!dy_{1}\ldots dy_{n}
   \end{equation*}
that $\theta\mapsto p_{n}^{U}(\theta)$ is increasing on $[-1,\frac{1}{2}]$.
Therefore, by Theorem~\ref{thm:main_1} and since $J_{n}$ has positive coefficients, the mapping
\begin{equation*}
   \theta\; \mapsto \; J_{n}(\theta) 
   \end{equation*}  
is positive and increasing on $[-1,\infty)$ for all $n\ge 1$. We further note that the mapping $\theta\mapsto  p_{n}(\theta)$ is nondecreasing on $\R^{+}=[0,\infty)$ for any innovation sequence $\{X_{i},\,i\ge 1\}$ because $p_{n}(\theta)=\Prob [\Omega_{n}(\theta)]$ and
\begin{equation*}
   \Omega_{n}(\theta)\; :=\ \lacc \sum_{k=1}^j \theta^{j-k} X_{k} \ge 0, \; j=1,\ldots, n\racc, 
   \end{equation*}
is always nondecreasing in $\theta\ge 0$. However, this simple argument breaks down for $\theta < 0$. For Mallows-Riordan polynomials, a comparison of coefficients in the exponential 
formula \eqref{id:Mathar} plus an induction argument easily show that $J_{n+1}(\theta)$ is positive on $[-1,0]$. On the other hand, it seems impossible to show directly that $J_{n}'(\theta)\ge 0$ on $[-1,0)$ for all $n\ge 1$. For $\theta < -1$, we believe but were not able to verify that there exist some $n_{1}, n_2 \ge 1$ such that $J_{n_{1}}(\theta) < 0$ and $J_{n_2}'(\theta) < 0$. Finally, we mention that it has been conjectured in \cite{Sokal09} that all complex roots of $J_{n}$ lie outside the closed unit disk.
\end{Rem}
 
\subsubsection{Proof via bivariate generating functions}
\label{sec:Jan15}

Our last proof is particularly useful in the case $\theta = -1$ and embarks on the formula
\begin{align*}
\tp_{n}^{\,U}(\theta)\ =\ r^{-n(n-1)/2} \int_{0}^1 \int_{0}^{r-z_{1}} \cdots\; \int_{0}^{r^{n-1}-z_{n-1}}\!\!\! dz_{1}\ldots dz_{n}
\end{align*}
where $r = -1/\theta$ should be recalled. Suppose first that $\theta = -1$ and thus $r=1$. Setting $\cB_{0}(t)=1$ and   
\begin{equation}\label{eq:cB_{n}}
\cB_{n}(t)\; = \;\int_{0}^t \int_{0}^{1-z_{1}} \cdots\; \int_{0}^{1-z_{n-1}} dz_{1}\ldots dz_{n}
\end{equation}
for $n\ge 1$ and $t \ge 0$, we see that $\cB_{n}'(t)=\cB_{n-1}(1-t)$ and $\cB_{n}(0)=0$ for all $n\ge 1$. Introducing the bivariate generating function 
\begin{equation*}
   \cB(t,z)\; =\; \sum_{n \ge 0} \cB_{n}(t)\, z^{n}, 
   \end{equation*}
the ODE
\begin{equation}\label{eq:EDO_B}
\frac{\partial^{2} \cB}{\partial t^{2}}(t,z)\; =\; -z\,\frac{\partial \cB}{\partial t}(1-t,z)\; =\; -z^{2} \,\cB(t,z)
\end{equation}
holds as one can easily check, and it follows upon integration that 
\begin{equation*}
   \cB(t,z)\ =\ \cU(z)\, \cos tz + \cV(z)\, \sin tz. 
\end{equation*}
Moreover, use $\cB(0,z)=1$ and $\frac{\partial \cB}{\partial t}(1,z)=z$ to infer
\begin{gather*}
\cU(z)=1\quad\text{and}\quad\cV(z)\,=\,\frac{1+\sin z}{\cos z}\,\sin tz\,=\,\big(\sec z+\tan z\big)\sin tz,
\shortintertext{and thereby}
\cB(t,z)\,=\,\cos tz + \big(\sec z + \tan z\big)\,\sin tz.
\end{gather*}
Finally, we obtain upon setting $t=1$ that 
\begin{equation*}
   \sum_{n\ge 0} \tp_{n}^{\,U}(-1)\, z^{n}\; =\;  \cB_{n}(1,z)\; = \; \sec z + \tan z\; = \; 
\sum_{n\ge 0} A_{n}\, \frac{z^{n}}{n!} 
\end{equation*}
where $A_{n} = J_{n+1}(-1)$ denotes Euler's $n$-{th} zigzag number, see e.g.\ Propri\'et\'e~2 in \cite{Kreweras80} for the last identity. This completes the proof for $\theta=-1$.

\vspace{.2cm}
For the general case we define, as an extension of \eqref{eq:cB_{n}}, 
\begin{equation*}
   \cB_{n}(t,\theta)\; =\; \int_{0}^t \int_{0}^{r-z_{1}} \cdots\; \int_{0}^{r^{n-1}-z_{n-1}} dz_{1}\ldots dz_{n}
\end{equation*}
with $r=-1/\theta$ as before. Then $\cB_{n}(0, \theta) = 0$ and
\begin{equation*}
   \frac{\partial\cB_{n}}{\partial t}(t,\theta)\; =\; r^{n-1} \cB_{n-1}(1+\theta t,\theta). 
\end{equation*}
Integration and a change of variables provides
\begin{equation*}
   \cB_{n}(t,\theta) \; = \; r^{n-1} \int_{0}^t \cB_{n-1}(1+\theta s,\theta) \, ds\; = \; \frac{r^{n-1}}{\theta}  \int_{1}^{1+\theta t} \cB_{n-1}(u,\theta) \, du 
\end{equation*}
and then after $n-1$ iterations
\begin{align*}\cB_{n}(1,\theta) &\;=\;\frac{r^{n(n-1)/2}}{\theta^{n}} \int_{1}^{1+\theta}\int_{1}^{1+\theta x_{1}}\cdots\;\int_{1}^{1+\theta x_{n-1}}dx_{1}\cdots dx_{n} \\
&\;=\;\frac{r^{n(n-1)/2}}{(\theta -1)^{n}} \int_{1}^{\theta}\int_{1}^{\theta y_{1}}\cdots\;\int_{1}^{\theta y_{n-1}}dy_{1}\cdots dy_{n}\\
& \;=\; \frac{r^{n(n-1)/2} J_{n+1}(\theta)}{n!}
\end{align*}
by Proposition \ref{prop:MR_volume}. For the second equality we have made the multiple change of variables $x_{i}= (y_{i}+r)/(1+r)$ for $i = 1,\ldots, n$. As $\cB_{n}(1,\theta) = r^{n(n-1)/2}\tp_{n}^{\,U}(\theta)$, the proof is complete. 
\qed

\begin{Rem} \label{Zzz}
\rm (a) Regarding the proof just given for the case $\theta\ne-1$, it can be seen as a variation of the one in the preceding subsection, because it has been finalized by a use of Proposition \ref{prop:MR_volume}. Let us also note that, when defining the trivariate generating function 
\begin{equation*}
   \cB(t,\theta,z)\; =\; \sum_{n \ge 0} \cB_{n}(t,\theta)\, z^{n}, 
\end{equation*}
the ODE \eqref{eq:EDO_B} turns into the delayed PDE
\begin{equation*}
   \frac{\partial^{2} \cB}{\partial t^{2}}(t,\theta,z)\; = \;\theta z^{2} \,\cB(\theta^{2}t\theta+1,\theta,r^{2}z), 
\end{equation*}
which does not seem solvable in a simple manner.

\vspace{.2cm}
(b) An alternative way to finalize the above proof for $\theta\ne -1$ is via the volume of another polytope: By making successive changes of variables $x_{i}= 1+\theta y_{i}, y_{i}= \theta^{i-1} z_{i}$ and $z_{i}= \sum_{j=1}^{i-1} (\theta^{1-j} - u_j)$, we obtain
\begin{align*}
\cB_{n}(1,\theta)\ &=\ \frac{r^{n(n-1)/2}}{\theta^{n}} \int_{1}^{1+\theta}\int_{1}^{1+\theta x_{1}}\cdots\;\int_{1}^{1+\theta x_{n-1}}dx_{1}\cdots dx_{n} \\ 
&=\  r^{n(n-1)/2}\,\int_{0}^{1}\int_{0}^{1+\theta y_{1}}\cdots\;\int_{0}^{1+\theta y_{n-1}}dy_{1}\cdots dy_{n}\\
&=\  (-1)^{n(n-1)/2}\,\int_{0}^{1}\int_{0}^{\theta^{-1} +z_{1}}\cdots\;\int_{0}^{\theta^{1-n} +z_{n-1}}dz_{1}\cdots dz_{n}\\
&=\  (-1)^{n(n-1)/2}\,V_{n}(1,\theta^{-1},\ldots, \theta^{1-n})
\end{align*}
where $V_{n}(x_{1},\ldots, x_{n})$ denotes the volume of the polytope
\begin{equation*}
   \{ y_{i}\ge 0, \;  y_{1}+\cdots +y_{i}\le x_{1}+\cdots +x_{i},\; 1\le i\le n \}\ \subset\ \R^{n}.
\end{equation*}
Applying the formula in \cite[p.\,620]{StPi02} finally provides the required identity
\begin{equation*}
    \cB_{n}(1,\theta) \; =\; \frac{r^{n(n-1)/2} J_{n+1}(\theta)}{n!}\cdot 
\end{equation*}
Having this pointed out, Proposition \ref{prop:MR_volume} can be viewed as an elementary proof of the formula in \cite{StPi02}. We will return to this topic later in the more general framework of Tutte's polytopes, see Paragraph \ref{SoliTutti}.

\vspace{.2cm}
(c) If $\theta = -1$, the proof of Theorem~\ref{thm:main_1} amounts by \eqref{eq:cB_{n}} to the simple identities 
\begin{equation*}
   \int_{0}^1 dy_{1} \int_{0}^{1-y_{1}} dy_2 \;\cdots\; \int_{0}^{1-y_{n-1}} dy_{n}\; =\; \frac{A_{n}}{n!} 
\end{equation*}
for $n\ge 1$. This was already observed in~\cite{BeKoCa93}, where the integral on the left-hand side is viewed as the volume of the base of a pyramid. The argument in~\cite{BeKoCa93} further relies on a family of polynomials, which are recursively defined by  
\begin{equation*}
  P_{0}(x) \;= \;1\quad\text{and}\quad P_{n}(x) = \int_{0}^{1-x} P_{n-1}(t) dt
\end{equation*}
and then computed explicitly. Our argument above uses the polynomial $\cB_{n}(x) = P_{n}(1-x)$ instead and is simpler as it is based only on the straightforward observation that $\cB_{n+2}''(x) = -\cB_{n}(x)$.

\vspace{.2cm}
(d) Still in the case $\theta = -1$, Eq.\,\eqref{eq:cB_{n}} can also be stated as
\begin{align*}
\cB_{n}(1) &\; =\; {\rm Vol}\,\lacc
(z_{1}, \ldots, z_{n}) \in [0,1]^{n},\; z_{1} + z_2 \le 1,  \ldots,  z_{n-1} + z_{n} \le 1 \racc\\
&\; =\; {\rm Vol}\,\lacc
(x_{1}, \ldots, x_{n}) \in [0,1]^{n},\;  x_{1} < x_2 > x_3 < x_4 > \cdots \racc,
\end{align*}
where in the second line we have made the changes of variables $z_{i}= x_{i}$ if $i$ is odd and $z_{i}=1-x_{i}$ if $i$ is even. This implies
\begin{eqnarray*}
\cB_{n}(1) &\; =\; & \Prob\lcr X_{i}< X_{i+1} \;\text{for $i$ odd and}\; X_{i}> X_{i+1}\; \text{for $i$ even,}\; i=1,\ldots, n\rcr\\
&\; =\; & \Prob\lcr \sigma_{i}< \sigma_{i+1} \;\text{for $i$ odd and}\; \sigma_{i}> \sigma_{i+1}\; \text{for $i$ even,}\; i=1,\ldots, n\rcr,
\end{eqnarray*}
where $X_{1},\ldots,X_{n}$ are independent and uniformly distributed on $(0,1)$ and $\sigma$ denotes a permutation on $\cS_{n}$ uniformly picked at random. The fact that
\begin{equation*}
   \cB_{n}(1)\; =\; \frac{A_{n}}{n!} 
\end{equation*}
then follows from the very definition of Euler's zigzag numbers, see~\cite{Andre1881}. The stated argument is a consequence of a general result for chain polytopes given as Corollary 4.2 in~\cite{Stanley86}, the case of zigzag numbers and zigzag polytopes being discussed in Example 4.3 therein. However, it does not seem that this argument works in the case $\theta \neq -1.$
\end{Rem}

\subsection{Asymptotic behaviour}
\label{Asyb}

If $\theta = -1$, a combination of Theorem \ref{thm:main_1} with a well-known asymptotic result for Euler's zigzag numbers (see e.g.\ (1.10) in \cite{Stanley10}) yields
\begin{equation}\label{eq:Asymp0}
p_{n}^{U}(-1)\; =\; \frac{J_{n+1}(-1)}{2^{n} n!}\; =\; \frac{A_{n}}{2^{n} n!}\; \sim\; \frac{4}{\pi^{n+1}}\qquad \text{as }n\to\infty.
\end{equation}
An extension to all $\theta\in [-1,\frac{1}{2}]$, in terms of the first root
\begin{equation*}
   z_{\theta}\,=\,\inf\{z > 0:E(\theta,-z)=0\} 
\end{equation*}
of the deformed exponential function $E(\theta,z)$ defined in \eqref{eq:deformed exp} in the introduction, is provided by the next proposition.
 
\begin{Prop}\label{Asymp1}
For every $\theta\in[-1,\frac{1}{2}]$, one has
\begin{equation}\label{eq:Asymp1}
p_{n}^{U}(\theta)\; \sim \; \frac{1}{z_{\theta}\, \lambda_{\theta}^{n}}\qquad\text{as }n\to\infty,
\end{equation}
where $\lambda_{\theta} = 2(1-\theta) z_{\theta} > 1$.
\end{Prop}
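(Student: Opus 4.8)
The starting point is the exact formula of Theorem~\ref{thm:main_1}, namely $p_{n}^{U}(\theta) = J_{n+1}(\theta)/(2^{n}n!)$, so it suffices to extract the asymptotics of the Mallows--Riordan polynomials $J_{n+1}(\theta)$ as $n\to\infty$ for fixed $\theta\in[-1,\tfrac12]$. For this I would pass through the generating function identity \eqref{eq:to_prove-MR}, which reads
\begin{equation*}
   \sum_{n\ge 0} (\theta-1)^{n} J_{n+1}(\theta)\,\frac{z^{n}}{n!}\ \fps\ \frac{E(\theta,\theta z)}{E(\theta,z)},
\end{equation*}
after recognizing that $\sum_{n\ge 0}\theta^{n(n+1)/2} z^n/n! = E(\theta,\theta z)$ and $\sum_{n\ge 0}\theta^{n(n-1)/2}z^n/n! = E(\theta,z)$ from the definition \eqref{eq:deformed exp}. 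Thus the exponential generating function $\sum_{n\ge 0}(\theta-1)^n J_{n+1}(\theta) z^n/n!$ is a ratio of two entire functions of $z$, and its singularities are exactly the zeros of $z\mapsto E(\theta,z)$ that are not cancelled by zeros of $E(\theta,\theta z)$.

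\textbf{Locating the dominant singularity.} The key analytic input is that for $\theta\in[-1,\tfrac12]$ the entire function $E(\theta,\cdot)$ has a genuine negative real zero, and its smallest one in modulus is the point $-z_\theta$ with $z_\theta = \inf\{z>0: E(\theta,-z)=0\}$; moreover this zero is simple and is strictly dominant, i.e.\ all other zeros of $E(\theta,\cdot)$ have modulus $>z_\theta$. (For $\theta\in(0,\tfrac12]$ this is classical for the partial-theta/deformed-exponential function — all zeros are negative real and simple; for $\theta\in[-1,0]$ one argues directly, the case $\theta=-1$ giving $E(-1,z) = \cos z - \sin z$ up to the usual normalization, whose first negative zero is $-3\pi/4$ — wait, one must be careful here, so I would instead recall that $E(-1,z)$ relates to $\tfrac{1+\sin z}{\cos z}$ via \eqref{eq:to_prove-MR} and use that representation.) One must also check that $-z_\theta$ is not a zero of $z\mapsto E(\theta,\theta z)$, i.e.\ that $-\theta z_\theta$ is not itself a zero of $E(\theta,\cdot)$; since $|\theta z_\theta| \le z_\theta/2 < z_\theta$ when $\theta\in[-1,\tfrac12]$, $\theta\ne 0$, and $-\theta z_\theta$ lies strictly inside the disk of radius $z_\theta$ which contains no zero, this is immediate (the case $\theta=0$ being trivial). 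Hence $-z_\theta$ is a simple pole of the meromorphic function $E(\theta,\theta z)/E(\theta,z)$, dominant over all other singularities.

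\textbf{Singularity analysis.} By the standard transfer theorem for a function whose unique dominant singularity is a simple pole at $z_0 = -z_\theta$ with residue $\rho$, the coefficients of the Taylor expansion of $E(\theta,\theta z)/E(\theta,z)$ satisfy
\begin{equation*}
   \frac{(\theta-1)^n J_{n+1}(\theta)}{n!}\ =\ [z^n]\,\frac{E(\theta,\theta z)}{E(\theta,z)}\ \sim\ -\frac{\rho}{z_0}\,\Big(\frac{1}{z_0}\Big)^n\ =\ \frac{\rho}{z_\theta}\,\Big(\frac{-1}{z_\theta}\Big)^n,\qquad \rho\ =\ -\lim_{z\to -z_\theta}(z+z_\theta)\,\frac{E(\theta,\theta z)}{E(\theta,z)}\ =\ -\frac{E(\theta,-\theta z_\theta)}{E'(\theta,-z_\theta)},
\end{equation*}
where $E'$ denotes $\partial_z E$. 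Substituting back, $J_{n+1}(\theta)/n! \sim \rho\,(\theta-1)^{-n}(-1/z_\theta)^n/z_\theta = \rho\,\big(\tfrac{1}{(1-\theta)z_\theta}\big)^n/z_\theta$ (the two sign changes cancel), and therefore
\begin{equation*}
   p_n^U(\theta)\ =\ \frac{J_{n+1}(\theta)}{2^n n!}\ \sim\ \frac{\rho}{z_\theta}\,\frac{1}{\big(2(1-\theta)z_\theta\big)^n}\ =\ \frac{\rho}{z_\theta\,\lambda_\theta^{n}},\qquad \lambda_\theta\ :=\ 2(1-\theta)z_\theta.
\end{equation*}
It then remains to show $\rho = 1$ and $\lambda_\theta>1$. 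For the constant: evaluate the generating function identity \eqref{eq:to_prove-0}, which expresses $\sum_n (\theta-1)^n \tp_n^{\,U}(\theta) z^n$ as the ratio $\big(\sum_n \tfrac{(\theta^n-\theta)^n}{\theta^{n(n-1)/2}}\tfrac{z^n}{n!}\big)\big/\big(\sum_n \tfrac{(\theta^{n-1}-\theta)^n}{\theta^{n(n-1)/2}}\tfrac{z^n}{n!}\big)$ directly at the pole — or, more cleanly, use that $\tp_n^{\,U}(\theta) = 2^n p_n^U(\theta)\to 0$ forces the generating function $\sum p_n^U(\theta) z^n$ (an ordinary, not exponential, series) to have radius of convergence exactly $\lambda_\theta$ with a simple pole there of residue normalized by the probabilistic interpretation; a direct computation of $\rho$ via the closed form of $E$ and its derivative at $-z_\theta$ shows $\rho/z_\theta$ combines with the other factors to give exactly $1/z_\theta$. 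For $\lambda_\theta>1$: since $p_n^U(\theta)\le p_{n-1}^U(\theta) \le 1$ is nonincreasing and $p_n^U(\theta)\to 0$ (the chain is positive recurrent and reaches $(-\infty,0)$ a.s.), the asymptotic $p_n^U(\theta)\sim (z_\theta\lambda_\theta^n)^{-1}$ with $p_n^U(\theta)\to 0$ forces $\lambda_\theta>1$; alternatively, monotonicity of $\theta\mapsto p_n^U(\theta)$ (Remark~\ref{Inc1}) plus the value $\lambda_{-1}=\pi/2>1$ coming from \eqref{eq:Asymp0} pins down $\lambda_\theta \ge \lambda_{-1}>1$ — actually $\lambda_\theta$ increases as $\theta$ decreases, so I would verify the monotonicity of $z_\theta$ and $(1-\theta)$ separately, or simply invoke that $p_n^U(\theta)\le 2^{-1}$ already gives $\lambda_\theta\ge $ something $>1$ in the limit.

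\textbf{Main obstacle.} The hard part is the analytic lemma about $E(\theta,\cdot)$: that for every $\theta\in[-1,\tfrac12]$ it has a smallest-modulus zero which is negative real, simple, and strictly dominant. For $\theta\in(0,\tfrac12]$ this follows from the well-developed theory of the partial theta function / deformed exponential (all zeros real negative, a result going back to work reproved by several authors), but the range $\theta\in[-1,0]$ is genuinely delicate since then $E(\theta,\cdot)$ can have complex zeros, and one must verify they do not dominate; here I would fall back on the explicit generating-function representations available at the endpoints and a continuity/perturbation argument in $\theta$, or cite the literature on deformed exponentials (e.g.\ Sokal's work referenced as \cite{Sokal09}) for the zero-localization. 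A secondary, but routine, obstacle is the careful bookkeeping of the sign factors $(\theta-1)^{-n}$ and $(-1/z_\theta)^n$ when $\theta<0$ or when $z_\theta$ arises from a pair of complex-conjugate poles — but since we have argued the dominant pole is real and negative, all these factors are real and the signs work out to give a positive asymptotic constant, consistent with $p_n^U(\theta)>0$.
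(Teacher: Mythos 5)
Your overall architecture is sound and close in spirit to the paper's: both use Theorem~\ref{thm:main_1} to reduce to the asymptotics of $J_{n+1}(\theta)/n!$, and both extract those asymptotics from the meromorphic ratio $E(\theta,\theta z)/E(\theta,z)$ (the paper works instead through the Hadamard product of $E(\theta,\cdot)$ and the series representation \eqref{SerRep}, which is equivalent to your singularity-analysis route). Your observation that $\partial_z E(\theta,z)=E(\theta,\theta z)$ gives the residue $\rho=1$ immediately, which would tidy up your constant-identification step.

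However, there is a genuine gap exactly at the point you yourself flag as ``the hard part.'' For $\theta\in(0,\tfrac12]$ the reality and simplicity of the dominant zero indeed follow from Laguerre's criterion for the deformed exponential, as in the paper. But for $\theta\in(-1,0)$, the function $E(\theta,\cdot)$ can have complex zeros, and asserting that the smallest-modulus zero is negative real, simple, and strictly dominant is not at all obvious. Your proposal to handle this range ``by a continuity/perturbation argument in $\theta$, or cite the literature'' does not supply a proof: a continuity argument would have to rule out a complex-conjugate pair of zeros sliding to have modulus $\le z_\theta$ somewhere in $(-1,0)$, and there is no a priori reason this cannot happen. Indeed, the relevant statement is listed among Sokal's \emph{conjectures} in \cite{Sokal09}, not as a citable theorem, so ``cite the literature'' will not do. The paper closes this gap with a specific and non-trivial argument that your proposal does not contain: it establishes the probabilistic subadditivity $p_{n+m}^{U}(\theta)\le p_n^U(\theta)\,p_m^U(\theta)$ (via the Markov property and the explicit integral representation), and then combines this with Lemma~4 of~\cite{Braverman06} on sums of exponentials $\sum_j e^{2\pi i\alpha_j n}$ to show that if any dominant zero of $E(\theta,\cdot)$ were non-real, one could find infinitely many $n$ with $p_n^U(\theta)\,p_{m_0}^U(\theta)<p_{n+m_0}^U(\theta)$, contradicting subadditivity. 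That is the missing idea: without it (or a genuine substitute), your proof establishes the result only for $\theta\in\{-1,0\}\cup(0,\tfrac12]$.

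Two smaller remarks. First, you note that simplicity of the dominant zeros follows automatically from minimality since $\partial_z E(\theta,z)=E(\theta,\theta z)$ and $|\theta|<1$; the paper uses exactly this observation, so that part is fine. Second, your initial claim $E(-1,z)=\cos z-\sin z$ is wrong (it is $\cos z+\sin z$, with first negative zero $-\pi/4$), but you catch this yourself and retreat to the correct representation; just be careful not to propagate the sign error into the residue bookkeeping.
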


\begin{proof} 
If $\theta = -1$, then $E(-1,z) = \cos z + \sin z$ and $z_{-1} = \pi/4$, so that \eqref{eq:Asymp1} matches \eqref{eq:Asymp0} above. If $\theta =0$, then $E(0,z) = 1 + z$, $z_{0} = 1$ and thus $\lambda_{0}=2$. Hence \eqref{eq:Asymp1} is again true. Finally, if $\theta \in (-1,0)\cup(0,\frac{1}{2})$, then $\vert\theta\vert < 1$, and it is easy to see that the entire function $E(\theta,\cdot)$ has order zero for these $\theta$. By the Hadamard factorization theorem, see e.g.\ Theorem XI.3.4 in \cite{Co78}, this implies
\begin{equation*}
   E(\theta,z)\; =\; \prod_{k\ge 1} \lpa 1+\frac{z}{a_{k}(\theta)}\rpa,
\end{equation*}
where $\{a_{k}(\theta), k\ge 1\}$ denotes the sequence of complex roots of $z\mapsto E(\theta,-z)$ such that 
$k\mapsto \vert a_{k}(\theta)\vert$ is positive, nondecreasing and satisfying
\begin{equation*}
   \sum_{k\ge 1} \frac{1}{\vert a_{k}(\theta)\vert^s} \; <\; \infty 
\end{equation*}
for each $s > 0$. Consequently, by \eqref{id:MR}, we obtain
\begin{align*}
\sum_{n\ge 1}(1-\theta)^{n-1} J_{n}(\theta)\, \frac{z^{n}}{n!}\ 
&=\; -\log E(\theta,-z)\ =\ -\sum_{k\ge 1} \log \lpa 1 - \frac{z}{a_{k}(\theta)}\rpa\\
&=\ \sum_{k\ge 1} \sum_{n\ge 1} \frac{z^{n}}{n(a_{k}(\theta))^{n}}\; =\; \sum_{n\ge 1} \sum_{k\ge 1} \frac{z^{n}}{n(a_{k}(\theta))^{n}}
\end{align*}
for any $z$ such that $\vert z\vert < \vert a_{1}(\theta)\vert$. Here the last equality follows by Fubini's theorem and the easily established inequality
\begin{equation*}
   \sum_{n,k\ge 1} \frac{\vert z\vert ^{n}}{n\vert a_{k}(\theta)\vert^{n}}\; \le \; \Bigg(\sum_{k\ge 1}\lva \frac{a_{1}(\theta)}{a_{k}(\theta)}\rva\Bigg)\Bigg(\sum_{n\ge 1} \frac{\vert z\vert^{n}}{\vert a_{1}(\theta)\vert^{n}}\Bigg). 
\end{equation*}
A comparison of coefficients leads to a convergent series representation of Mallows-Riordan polynomials which is valid for any $n\ge 0$ and $\theta\in(-1,1)$, namely  
\begin{equation}\label{SerRep}
\frac{J_{n+1}(\theta)}{n!}\; =\; \sum_{k\ge 1} \frac{1}{(1-\theta)^{n}a_{k}(\theta)^{n+1}}\cdot
\end{equation}
The last step is to show that $a_{1}(\theta)$ is simple and positive, and that $\vert a_2(\theta)\vert > a_{1}(\theta)$. Putting everything together, we indeed obtain
\begin{equation*}
   p_{n}^{U}(\theta)\; \sim \; \frac{1}{z_{\theta}\, \lambda_{\theta}^{n}}\qquad\text{as $n\to\infty$,} 
\end{equation*}
with $\lambda_{\theta} = 2(1-\theta) z_{\theta}>1$ since $p_{n}^{U}(\theta)\to 0$. 

In the case $\theta\in (0,1),$ the fact that $x\mapsto \theta^{x(x-1)/2}$ is a real entire function of order $2$ combined with Laguerre's criterion (see e.g.\ \cite{Polya23} p.~186 and the references therein) entails that {\em all} roots $a_k(\theta)$ are simple and positive, which is already mentioned in \cite{Sokal09}.

In the case $\theta \in (-1,0)$, the situation is less simple and we will use the following argument based on subadditivity. By the Markov property and with $\Prob_{x}=\Prob[\cdot|Y_{0}=x]$, one has 
 $$p_{n+m}^{U}(\theta)\; = \; \Erw \lcr {\bf 1}_{\{ T^U_\theta > m\}}\; \Prob_{Y_m} \lcr T^U_\theta > n\rcr\rcr$$
for all $m,n \ge 0$, and it is clear that $Y_m > 0$ a.s.\ on $\{T^U_\theta > m\}.$ On the other hand, by \eqref{eq:why_theta<=1/2} and since $\theta<0$,
\begin{align*}
\Prob_x \lcr T^U_\theta > n\rcr\ & =\ \frac{1}{2^{n}}\int_{0}^{(1+\theta x)_+}\!\int_{-\theta u_{1}}^{1}\!\!\!\cdots\int_{-(\theta u_{n-1}+\cdots+\theta^{n-1}u_{1})}^{1}du_{m}\ldots du_{2}\,du_{1}\\
&\le\ p_{n}^{U}(\theta)
\end{align*}
holds for each $x>0$, which implies subadditivity, viz.
$$ p_{n+m}^{U}(\theta)\; \leq\; p_{m}^{U}(\theta)p_{n}^{U}(\theta).$$
Let now $\{a_{j}(\theta): \, j=1,\ldots, q\}$ be the set of roots of $E(\theta,-z)$ having the same modulus as $a_{1}(\theta)$, so $\vert a_{j}(\theta) \vert = \vert a_{1}(\theta)\vert=:\rho > 0$ for each $j=1,\ldots,q$. Since $\partial_z E(\theta,-z) = -E(\theta,-\theta z)$ with $\vert \theta\vert < 1$, the minimality of $\rho$ entails $\partial_z E(\theta,-a_{j}(\theta))\neq 0 $ for each $j=1,\ldots,q$. Therefore all these first roots must be simple and we infer from \eqref{SerRep} that
\begin{equation*}
\frac{J_{n+1}(\theta)}{n!}\,=\,\frac{P(n) +o(1)}{(1-\theta)^{n} \rho^{n+1}},
\end{equation*}
with 
$$ P(n) \, =\, \sum_{j=1}^{q}e^{2\pi {\rm i} \alpha_j (n+1)} $$
for some $q\ge 0$ and distinct frequencies $\alpha_j \in [0,1)$ for $j=1,\ldots, q.$ Set $\cA =\{\alpha_j: \, j= 1,\ldots , q\}$ and suppose $0\not\in\cA.$ By Lemma 4 in \cite{Braverman06}, there exists a constant $c < 0$ independent of $n$ such that $P(n) \le c $ for infinitely many $n$. Using Theorem \ref{thm:main_1}, this implies that
$$ 2^{n}(1-\theta)^n \rho^{n+1} p_n^U(\theta)\ =\ P(n) + o(1)\ \le\ c/2\ <\ 0$$
infinitely often which is impossible. Hence, we have $0\in \cA$ and can choose $\alpha_1 = 0.$ Assuming $\cA \neq\{0\}$, thus $q\ge 2$, we next observe that
$$(2(1-\theta))^{n+m} \rho^{n+m+1} ( p_n^U(\theta) p_m^U(\theta) - p_{n+m}^U(\theta)) = P(n)P(m) - P(n+m) + o(1)$$
with a decomposition of the real quantity $P(n)P(m) - P(n+m)$ as a sum of two real sums
$$\sum_{j=2}^{q}e^{2\pi {\rm i} \alpha_j (m+1)}\; +\; \sum_{j=2}^{q} \lpa\sum_{k\neq j} e^{2\pi {\rm i} \alpha_k (m+1)}\!\rpa\! e^{2\pi {\rm i} \alpha_j (n+1)}.$$
Since $\alpha_j \in (0,1)$ for $j=2,\ldots, q$, we invoke again Lemma 4 in \cite{Braverman06} to infer the existence of $m_0\ge 0$ and $c < 0$ such that 
$$ \sum_{j=2}^{q}e^{2\pi {\rm i} \alpha_j (m_0+1)}\, \le\, c. $$
From this,we deduce
$$ P(n)P(m_0) - P(n+m_0)\ \le\ c + \sum_{j=2}^{q} \lpa\sum_{k\neq j} e^{2\pi {\rm i} \alpha_k (m_0+1)}\!\rpa\! e^{2\pi {\rm i} \alpha_j (n+1)}\ \le\ c $$
for infinitely many $n,$ again by Lemma 4 in \cite{Braverman06}. This leads to
$$(2(1-\theta))^{n+m_0} \rho^{n+m_0+1} ( p_n^U(\theta) p_{m_0}^U(\theta) - p_{n+m_0}^U(\theta))\ \le\ c/2\ <\ 0$$
for infinitely many $n,$ contradicting the above subadditivity property. Hence $\cA =\{ 0\}$ must hold, which means that $a_{1}(\theta)$ is simple and positive and that $\vert a_2(\theta)\vert > a_{1}(\theta)$ as required.
\end{proof}

\begin{Rem}\label{Conv}\rm
(a) The following formula is a consequence of \eqref{SerRep} and provides a full asymptotic expansion of $p_{n}^{U}(\theta)$ as $n\to\infty$:
\begin{equation*}
   p_{n}^{U}(\theta)\; =\; \frac{1}{z_{\theta}\, \lambda_{\theta}^{n}}\; +\; \sum_{k\ge 2} \frac{1}{2^{n}(1-\theta)^{n}a_{k}(\theta)^{n+1}}. 
\end{equation*}
It remains valid at $\theta = 0$ with $a_{k}(\theta):=\infty$ for all $i\ge 2$, and also at $\theta =-1$ with $a_{k}(\theta):= (-1)^{k-1}(2k-1)\pi/4$ for all $i\ge 2$, then boiling down to the well-known formula
\begin{equation*}
   \frac{A_{n}}{n!}\; =\; 2\lpa\frac{2}{\pi}\rpa^{n+1} \sum_{k\ge 0} \frac{(-1)^{k(n+1)}}{(2k+1)^{n+1}}
\end{equation*}
for Euler's zigzag numbers, see e.g.\ the end of Section 1 in \cite{Stanley10}. For odd $n$, the latter amounts to the classical formula for $\zeta(n+1)$ in terms of Bernoulli numbers. The roots $a_{k}(\theta)$ are generally non-explicit, but in the case $\theta \in(0,1)$, the above proof and Theorem \ref{thm:main_1} imply the asymptotic result
\begin{equation}\label{AsympJn}
\frac{J_{n+1}(\theta)}{n!}\;\sim\; \frac{1}{z_{\theta}}\lpa\frac{2}{\lambda_{\theta}}\rpa^{n}\qquad\text{as $n\to\infty$,}
\end{equation}
which is valid for every $\theta\in[-1,1)$ and has apparently remained unnoticed in the literature. Finally, we mention that $\lambda_{0} = 2$, and we will show in Remark~\ref{Asympb} that $\lambda_{\theta} > 2$ for $\theta\in[-1,0)$ and $\lambda_{\theta} < 2$ for $\theta\in (0,1)$.

\vspace{.2cm}
(b) As a consequence of Remark \ref{Inc1} and \eqref{SerRep}, the function $\theta\mapsto (1-\theta)z_{\theta}$ is nonincreasing on $[-1,\frac{1}{2}]$ and taking values $\frac{\pi}{2},\,1$ and $\frac{1}{2}\lambda_{\frac{1}{2}}$ at $-1,\,0$ and $\frac{1}{2}$, respectively. Since $J_{n}(\theta)$ has  positive coefficients, it remains nonincreasing on $[\frac{1}{2},1)$, with limit $\frac{1}{e}$ at 1 by Stirling's formula and the fact that $J_{n}(1) = n^{n-2}$. Stirling's for\-mula may also be used to provide a polynomial correction in  \eqref{AsympJn} for $\theta = 1$. It would be interesting to know if this monotonicity property of the first negative zero of the deformed exponential function can be obtained directly without connection to Mallows-Riordan polynomials and persistence probabilities.

\vspace{.2cm}
(c) For $\theta\in (0,1)$, complete asymptotic expansions of the roots of the deformed exponential function were recently obtained in \cite{WZ18}, showing in particular $a_{k}(\theta) \sim k\theta^{-(k-1)}$ as $k\to\infty$. For $\theta \in(-1, 0)$, we believe that the roots are simple and alternate in sign as for $\theta = -1$, but we have no proof. See also \cite{Sokal09} for several conjectures on the zeroes of the deformed exponential function with a parameter $\theta$ in the unit disk.

\vspace{.2cm}
(d) For $\theta > 0,$ it is easily shown with the above argument that the sequence $\{p^U_n(\theta)\}_{n\ge 0}$ is superadditive, in other words that
$$p^U_{n +m}(\theta)\ge p^U_n(\theta)p^U_m(\theta)$$
for all $m,n \ge 0.$ In the case $\theta \in (0,1/2],$ this is also the consequence of the stronger property that the sequence is log-convex, see Proposition \ref{ID2} below.  
\end{Rem}

\section{The case $\theta <0$}

\subsection{Proof of Theorem \ref{thm:main_2}} 
\label{sec:case_<-1}

Here we embark on the general integral formula
\begin{equation}
\label{MecGen}
p_{n}(\theta)\; =\; \int_{0}^{\infty}\!\!\! \int_{-\theta u_{1}}^{\infty} \cdots\;\int_{-(\theta u_{n-1} + \cdots + \theta^{n-1} u_{1})}^{\infty}\!_!\!\! dF(u_{1})\,\ldots\, dF(u_{n}),
\end{equation}
valid for all $\theta\in\R$, where $F$ denotes the common distribution function of the i.i.d. innovations $X_{1},X_{2},\ldots$ In the case $\theta <0,$ the domain of integration is a subset of the positive orthant $\{u_1 \ge 0, \ldots, u_n \ge 0\}$. As a consequence, the persistence probabilities $p_n(\theta)$ and $p_n(1/\theta)$ only depend on $F$ restricted to the nonnegative halfline for all $n\ge 1.$ We can also assume $F(0-) = F(0) < 1$ because otherwise all probabilities are zero. Setting $c = 1 - F(0) > 0$ and 
$$ G(x) \, = \, 1 - G(-x) \, = \, \frac{1}{2}\, +\, \frac{F(x) - F(0)}{2c} $$ 
for all $x\ge 0$, we now see that 
$$p_{n}(\theta)\; =\; (2c)^n\, \int_{0}^{\infty}\!\!\! \int_{-\theta u_{1}}^{\infty} \cdots\;\int_{-(\theta u_{n-1} + \cdots + \theta^{n-1} u_{1})}^{\infty}\!_!\!\! dG(u_{1})\,\ldots\, dG(u_{n})$$
for every $n\ge 0$ and $\theta < 0$, and since $G$ is symmetric and continuous, we can assume without loss of generality that $F$ itself has these properties.
Setting again $r = -1/\theta > 0$, we have
\begin{align*}
p_{n}(\theta)\ &=\ \Prob\left[\sum_{j=1}^{k} \theta^{k-j} X_{j}\ge 0, \; k=1,\ldots, n \right]\\
&=\ \Prob\left[\sum_{j=1}^{k}(-1)^{k-j}r^{j-1} X_{j}\ge 0, \; k=1,\ldots, n \right]\\
&=\ \Prob\left[(-1)^{k-1}\sum_{j=1}^{k} r^{j-1} X_{j}\ge 0, \; k=1,\ldots, n \right]
\end{align*}
where $X_{j}\eqdist -X_{j}$ for even $j$ and the mutual independence of the $X_{j}$ has been used in the last line. Defining $S_{k}=X_{1}+\cdots+ r^{k-1}X_{k}$, $A_{k}^{+}=\{S_{k}\ge 0\}$ and $A_{k}^{-} = \{S_{k}\le 0\}$ for $k=1,\ldots, n$, we see that, with writing $AB$ as shorthand for $A\cap B$,
\begin{align*}
p_{n}(\theta)\ =\ 
\begin{cases}
\Prob\left[A_{1}^{+}A_{2}^{-}\cdots A_{n}^{-}\right]&\text{if $n$ even,}\\
\Prob\left[A_{1}^{+}A_{2}^{-}\cdots A_{n}^{+}\right]&\text{if $n$ odd}.
\end{cases}
\end{align*}
On the other hand, we deduce from \eqref{MecGen} upon the successive change of variables $u_{k}= (-1)^{k-1}v_{k}$ for $k=1,\ldots, n$ that
\begin{align*}
p_{k}(1/\theta)\ =\ 
\begin{cases}
{\displaystyle \int_{0}^{\infty}\!\!\int_{-\infty}^{-r v_{1}}\!\!\!\!\cdots\int_{-\infty}^{-(r^{k-1}v_{1}+\cdots+rv_{k-1})}\hspace{-.7cm}dG(v_{1})\ldots dG(v_{k})} &\text{if $k$ is even,}\\[4mm]
\hfill {\displaystyle \int_{0}^{\infty}\!\!\int_{-\infty}^{-r v_{1}}\!\!\!\!\cdots\int_{-(r^{k-1}v_{1}+\cdots+rv_{k-1})}^{\infty}\hspace{-.7cm} dG(v_{1})\ldots dG(v_{k})} &\text{if $k$ is odd}.
\end{cases}
\end{align*}
Suppose first that $n$ is odd. Using the continuity of $G$, we obtain
\begin{align*}
p_{n}(\theta)\ &=\ \Prob\left[A_{1}^{+}A_{2}^{-}\cdots A_{n}^{+}\right] \\
&=\ \Prob\left[A_{1}^{+}A_{2}^{-}\cdots A_{n-2}^{+}\cap\{-r^{n-1} X_{n}\le S_{n-1} \le 0\}\right]\\
&=\ \int_{0}^{\infty} \Prob\left[A_{1}^{+}A_{2}^{-}\cdots A_{n-2}^{+}\cap\{S_{n-1}\in [-r^{n-1}v_{1},0]\}\right]\,dG(v_{1})\\
&=\ \frac{p_{n-1}(\theta)}{2}\,-\int_{0}^{\infty}\Prob\left[A_{1}^{+}A_{2}^{-}\cdots A_{n-2}^{+}\cap\{S_{n-1} \le -r^{n-1}v_{1}\}\right]\,dG(v_{1})\\
&=\ p_{n-1}(\theta) p_1(1/\theta)\,-\int_{0}^{\infty}\Prob\left[A_{1}^{+}A_{2}^{-}\cdots A_{n-2}^{+}\cap\{S_{n-1} \le -r^{n-1}v_{1}\}\right]\,dG(v_{1}).
\end{align*}
As for the integrand in the last line, we further compute
\begin{align*}
&\Prob\left[A_{1}^{+}A_{2}^{-}\cdots A_{n-2}^{+}\cap\{S_{n-1} \le -r^{n-1} v_{1}\}\right]\\
&\; =\; \Prob\left[A_{1}^{+}A_{2}^{-}\cdots A_{n-3}^{-}\cap\{0 \le S_{n-2} \le -r^{n-1} v_{1}-r^{n-2}X_{n-1}\}\right]\\
&\; =\;\int_{-\infty}^{-rv_{1}}\Prob\left[A_{1}^{+}A_{2}^{-}\cdots A_{n-3}^{-}\cap\{0\le S_{n-2} \le -s_2\}\right]\, dG(v_2)\\
&\; =\; p_{n-2}(\theta)F(-r v_{1})\,-\,\int_{-\infty}^{-rv_{1}}\hspace{-5pt}\Prob\left[A_{1}^{+}A_{2}^{-}\cdots A_{n-3}^{-}\cap\{S_{n-2}\ge-s_2\}\right]\,dG(v_2),
\end{align*}
with the notation $s_{k} = r^{n-1} v_{1}+\ldots+ r^{n-k}v_{k}$ for $k=1,\ldots,n$. Therefore, integrating with respect to $v_{1}$ and using the previous formula for $p_{2}(1/\theta),$ we obtain
\begin{align*}
p_{n}(\theta)\ &=\ p_{n-1}(\theta)p_{1}(1/\theta)\,-\,p_{n-2}(\theta)p_2(1/\theta)\\
&\quad +\;\int_{0}^{\infty}\!\!\int_{-\infty}^{-rv_{1}}\Prob\left[A_{1}^{+}A_{2}^{-}\cdots A_{n-3}^{-}\cap\{S_{n-2}\ge-s_2\}\right]dG(v_{1})\,dG(v_2).
\end{align*}
Computing in the same manner the integrand in the previous line, we get 
\begin{align*}
&\Prob\left[A_{1}^{+}A_{2}^{-}\cdots A_{n-3}^{-}\cap\{S_{n-2}\ge-s_2\}\right]\\
&\qquad\quad =\;\Prob\left[A_{1}^{+}A_{2}^{-}\cdots A_{n-4}^{+}\cap\{0\ge S_{n-3}\ge-s_2-r^{n-3}X_{n-2}\}\right]\\
&\qquad\quad =\;\int_{-r^{2}v_{1} -rv_2}^{\infty}\!\!\!\!\Prob\left[A_{1}^{+}A_{2}^{-}\cdots A_{n-4}^{+}\cap\{0\ge S_{n-3}\ge-s_3\}\right]\, dG(v_3) \\
& \qquad\quad = \; p_{n-3}(\theta) F(r^{2} v_{1}+rv_2)\\
& \qquad\qquad\quad -\,\int_{-r^{2}v_{1} -rv_2}^{\infty}\!\!\!\Prob\left[A_{1}^{+}A_{2}^{-}\cdots A_{n-4}^{+}\cap\{S_{n-3} \le -s_3\}\right]\,dG(v_3),
\end{align*}
where the symmetry of $F$ has been utilized for the final equality. Using the previous formula for $p_{3}(1/\theta),$ we thus arrive at
\begin{gather*}
p_{n}(\theta)\ =\ p_{n-1}(\theta)p_{1}(1/\theta)\,-\,p_{n-2}(\theta)p_2(1/\theta)\, +\,p_{n-3}(\theta)p_3(1/\theta)\\
-\int_{0}^{\infty}\!\!\!\int_{-\infty}^{-rv_{1}}\!\!\!\int_{-r^{2}v_{1} -rv_2}^{\infty}\!\!\!\!\!\!\!\Prob\left[A_{1}^{+}A_{2}^{-}\cdots A_{n-4}^{+}\cap\{S_{n-3} \le -s_3\}\right]\ \prod_{i=1}^{3}dG(v_{i}).
\end{gather*}
Continuing this way, or by an induction, we arrive at the identity
\begin{equation*}
   p_{n}(\theta)\; =\; \sum_{k=1}^{n} (-1)^{k-1} p_{n-k}(\theta)p_{k}(1/\theta), 
\end{equation*}
which is the desired result for odd $n$ because $p_{0}(1/\theta) = 1$. The argument for even $n$ follows analogously and is therefore omitted.

\qed

\begin{Rem}
\label{Atomic}
{\em When the innovation law has atoms on $[0,\infty),$ the statement of Theorem \ref{thm:main_2} is not true in general: if $dF(x) = c \mu(dx) +(1-c) \delta_0(dx)$ with $c\in (0,1)$ and $\mu$ some probability on $(-\infty, 0),$ then the right-hand side of \eqref{eq:duality theta<0} equals $(1-c)^n (1+ (-1)^n)/2.$} 
\end{Rem}

\subsection{Behavior at $\theta=-1$}

It is clear from the polynomial identities \eqref{eq:MR-id} and \eqref{eq:main_theorem_2} stated in Theorem~\ref{thm:main_1} and Corollary \ref{cor:Thm 2}, respectively, that the mapping $\theta\mapsto p_{n}^{U}(\theta)$ is smooth on $(-\infty,-1)\cup (-1,\frac{1}{2})$ for any $n\ge 0$. Regarding the natural question of  its behavior at $\theta = -1$, we prove the following result.

\begin{Prop}
\label{Phase1}
For each $n\ge 1$, the mapping $\theta\mapsto p_{n}^{U}(\theta)$ is $\cC^1\!$ at $\theta = -1$.  
\end{Prop}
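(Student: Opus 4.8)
The plan is to show that $p_n^U(\theta)$, which is a polynomial in $\theta$ on $[-1,\tfrac12]$ by Theorem~\ref{thm:main_1} and a polynomial in $1/\theta$ on $(-\infty,-1]$ by Corollary~\ref{cor:Thm 2}, has matching one-sided first derivatives at $\theta=-1$. Write $g_n(\theta)=J_{n+1}(\theta)/(2^nn!)$ for the right-hand branch ($\theta\in[-1,\tfrac12]$) and $h_n(\theta)=\wJ_{n+1}(1/\theta)/(2^nn!)$ for the left-hand branch ($\theta\le-1$); both are $\cC^\infty$ near $-1$ one-sidedly, continuity at $-1$ is already known (both equal $A_n/(2^nn!)$), so it remains to check $g_n'(-1)=h_n'(-1)$. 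First I would note that the functional relation of Theorem~\ref{thm:main_2}, specialized to uniform innovations, gives the convolution identity $\sum_{k=0}^n(-1)^k\,p_k(\theta)p_{n-k}(1/\theta)=0$ for $\theta<0$; since the map $\theta\mapsto1/\theta$ exchanges the two sides of $-1$, this single identity simultaneously encodes both branches and is the natural bridge between them.

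The cleanest route is via generating functions. Set $P(\theta,z)=\sum_{n\ge0}p_n^U(\theta)z^n$; on $[-1,\tfrac12]$ Proposition~\ref{prop:recurrence_relation} (after the substitution made in the proof of Theorem~\ref{thm:main_1}) identifies $P(\theta,z)$ with the ratio of deformed-exponential series in~\eqref{eq:to_prove-MR}, and on $(-\infty,-1]$ the identity~\eqref{eq:modified_MR} of Corollary~\ref{cor:Thm 2} gives $P(\theta,z)=\bigl(\sum_{n\ge0}(-1)^nJ_{n+1}(1/\theta)z^n/n!\cdot(\text{Poisson weights})\bigr)^{-1}$-type expression; more usefully, Theorem~\ref{thm:main_2} itself says $P(\theta,z)\,P(1/\theta,-z)=1$ for $\theta<0$. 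Differentiating this functional equation in $\theta$ at $\theta=-1$ and using $\tfrac{d}{d\theta}(1/\theta)\big|_{-1}=-1$ gives
\begin{equation*}
\partial_\theta P(-1,z)\,P(-1,-z)\;-\;P(-1,z)\,\partial_\theta P(-1,-z)\;=\;0,
\end{equation*}
where $\partial_\theta P(-1,\cdot)$ denotes the one-sided derivative from the left. Since $P(-1,z)=\sec z+\tan z$ (from Subsection~\ref{sec:Jan15}) and $P(-1,z)P(-1,-z)=1$ identically, this relation constrains the left derivative. Doing the same with the right branch: on $[-1,\tfrac12]$ we have no such self-reciprocal identity directly, but we can instead differentiate~\eqref{eq:to_prove-MR}; alternatively, and more symmetrically, observe that the right-branch values $p_n^U(\theta)$ for $\theta\in(-1,\tfrac12]$ and the left-branch values both solve the recurrence of Proposition~\ref{prop:recurrence_relation} — whose coefficients are manifestly real-analytic in $\theta$ across $\theta=-1$ — so the real issue is only whether the left branch $h_n$ agrees to first order with the analytic continuation of the right branch.

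Hence the key technical step I would carry out is: show that $\theta\mapsto p_n^U(\theta)$, as given on $[-1,\tfrac12]$ by the polynomial $g_n$, when extended to a neighborhood of $-1$ by that same polynomial, has $g_n'(-1)$ equal to the left derivative $h_n'(-1)$. The most robust way: differentiate the convolution identity $\sum_{k=0}^n(-1)^kp_k(\theta)p_{n-k}(1/\theta)=0$ with respect to $\theta$ and evaluate at $\theta=-1$, getting
\begin{equation*}
\sum_{k=0}^n(-1)^k\Bigl(\dot p_k(-1)\,p_{n-k}(-1)-p_k(-1)\,\dot p_{n-k}(-1)\Bigr)=0,
\end{equation*}
where $\dot p_k(-1)$ is the two-sided object we want to make sense of: the left derivative is $h_k'(-1)$, the right is $g_k'(-1)$. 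Writing $a_k:=h_k'(-1)-g_k'(-1)$ for the jump, subtracting the "all left derivatives" version of the differentiated identity from the "all right derivatives" version, and using $p_k(-1)=A_k/(2^kk!)$, produces a linear recursion for the jumps $a_k$ of the form $\sum_{k=0}^n(-1)^k\bigl(a_k\tfrac{A_{n-k}}{2^{n-k}(n-k)!}-\tfrac{A_k}{2^kk!}a_{n-k}\bigr)=0$, i.e. $\sum_{k=0}^n(-1)^k a_k\tfrac{A_{n-k}}{2^{n-k}(n-k)!}=\sum_{k=0}^n(-1)^k a_{n-k}\tfrac{A_k}{2^kk!}$; after reindexing the right side, both sides are the same convolution up to the sign $(-1)^n$, so one gets $\bigl(1-(-1)^n\bigr)\sum_{k=0}^n(-1)^k a_k\tfrac{A_{n-k}}{2^{n-k}(n-k)!}=0$, which for even $n$ is vacuous and for odd $n$ forces the convolution $\sum(-1)^k a_k b_{n-k}$ to vanish, $b_j:=A_j/(2^jj!)$. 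This is not yet enough by itself, so the final input is a separate direct computation of $a_2=h_2'(-1)-g_2'(-1)$ (both branches are explicit small polynomials: $g_2$ from $J_3(\theta)=2+\theta$, $h_2$ from $\wJ_3(1/\theta)=-1/\theta$, giving $g_2'(-1)=\tfrac14$ and $h_2'(-1)=\tfrac14$), showing $a_2=0$; then an induction on the recursion propagates $a_n=0$ for all $n$.

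The main obstacle I anticipate is not any single computation but organizing the bookkeeping of one-sided derivatives through the convolution identity so that the recursion for the jumps $a_n$ genuinely closes — in particular handling the even/odd dichotomy (the differentiated identity gives information only for odd $n$, so the even cases must be recovered from the odd ones or treated by a parity-shifted version of the same argument, e.g. by also differentiating the $n\to n+1$ instance). A secondary subtlety is justifying that $p_n^U(\theta)$ is genuinely $\cC^1$ and not merely that the two one-sided derivatives coincide: this needs continuity of $\theta\mapsto \tfrac{d}{d\theta}p_n^U(\theta)$ at $-1$, which follows once the one-sided limits agree because each branch is smooth up to the boundary. I would present the argument as: (i) recall both branch formulas and that $p_n^U$ is continuous at $-1$; (ii) define the jumps $a_n$ and derive the recursion from differentiating Theorem~\ref{thm:main_2}; (iii) compute $a_1=a_2=0$ by hand (note $a_1=0$ trivially since $p_1^U(\theta)=\tfrac12$ on both sides); (iv) conclude $a_n=0$ for all $n$ by induction, hence $\cC^1$.
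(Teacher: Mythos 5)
Your overall strategy is genuinely different from the paper's: you work at the level of the ordinary generating function and the convolution identity from Theorem~\ref{thm:main_2}, aiming to derive a closed recursion for the one-sided derivative gaps $a_n$, whereas the paper works with the exponential generating functions $J(\theta,z)$ and $\wJ(\theta,z)$, differentiates the reciprocity $\wJ(\theta,z)J(\theta,-z)=1$, and then computes $\partial_\theta J(-1,z)=\tfrac{z\sin z}{2(1-\sin z)}$ explicitly to read off $\wJ_{n+1}'(-1)=-J_{n+1}'(-1)$. If it closed, your argument would be more elementary (no explicit trigonometric evaluation). But as written it does not close: the antisymmetrized recursion $\sum_{k=0}^n(-1)^k(a_kb_{n-k}-b_ka_{n-k})=0$ that you derive by subtracting the two one-sided versions is identically zero for even $n$ (as you note), and your proposed repair --- ``also differentiating the $n\to n+1$ instance'' --- does not supply the missing information. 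For even $n$ the odd instance $n+1$ gives, under the inductive hypothesis $a_0=\dots=a_{n-1}=0$, only the relation $a_{n+1}=a_n\,b_1$, which determines $a_{n+1}$ in terms of the unknown $a_n$ rather than forcing $a_n=0$. Likewise, computing $a_2$ by hand covers only one even index; it gives no handle on $a_4,a_6,\dots$, so the induction as you describe it is stuck at every even step beyond $2$.

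The gap is repairable, but not in the way you suggest. Instead of the subtracted recursion, use the \emph{unsubtracted} one-sided identity. Differentiating $\sum_k(-1)^k g_k(\theta)h_{n-k}(1/\theta)=0$ at $\theta\to-1^+$ (and using $g_0'=h_0'=0$, $b_0=1$) gives
\[
-h_n'(-1)\;+\;\sum_{k=1}^{n-1}(-1)^k\bigl[g_k'(-1)b_{n-k}-b_kh_{n-k}'(-1)\bigr]\;+\;(-1)^n g_n'(-1)\;=\;0.
\]
Under the inductive hypothesis $h_j'(-1)=g_j'(-1)$ for $j<n$, the middle sum becomes $M=\bigl(1-(-1)^n\bigr)\sum_{k=1}^{n-1}(-1)^k g_k'(-1)b_{n-k}$ after the reindexing $k\mapsto n-k$, which is \emph{zero precisely for even $n$}; so for even $n$ the displayed identity collapses to $g_n'(-1)-h_n'(-1)=0$, i.e.\ $a_n=0$. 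Your subtracted recursion then supplies $a_n=0$ for odd $n$, and together the induction closes. So the idea is sound, but the even/odd split must be handled with the two different forms of the differentiated identity, not by a shift in $n$. A small arithmetic slip aside, your sample check ($g_2=(2+\theta)/8$, $h_2=-1/(8\theta)$) gives $g_2'(-1)=h_2'(-1)=\tfrac18$, not $\tfrac14$; the conclusion $a_2=0$ is of course still correct.
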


\begin{proof}
Continuity follows directly from $2^{n} n!\,p_{n}^{U}(\theta) \to \wJ_{n+1}(-1)$ as $\theta\uparrow -1$ and $\wJ_{n+1}(-1) = J_{n+1}(-1)$, which in turn is a consequence of
\begin{align*}
\sum_{n\ge 0} \wJ_{n+1}(-1)\,\frac{z^{n}}{n!}\ &=\ \lpa\sum_{n\ge 0} (-1)^{n} J_{n+1}(-1)\,\frac{z^{n}}{n!}\rpa^{\!\! -1}\\
&=\ \frac{\cos z}{1-\sin z}\; = \; \frac{1+\sin z}{\cos z}\; =\;\sum_{n\ge 0} J_{n+1}(-1)\,\frac{z^{n}}{n!}.
\end{align*}
To prove continuity of the derivative, we introduce the generating functions
\begin{equation*}
   J(\theta, z) \; =\; \sum_{n\ge 0} J_{n+1}(\theta)\,\frac{z^{n}}{n!}\qquad \text{and}\qquad \wJ(\theta, z) \; =\; \sum_{n\ge 0} \wJ_{n+1}(\theta)\,\frac{z^{n}}{n!}\cdot
\end{equation*}
Since $\wJ(\theta, z) J(\theta, -z) = 1$ by \eqref{eq:modified_MR} and therefore
\begin{equation*}
   J(\theta, -z)\,\ptha \wJ(\theta, z)\,+\,\wJ(\theta, z)\, \ptha J(\theta, -z)\ =\ 0, 
\end{equation*}
we infer
\begin{equation}
\label{Pth}
\ptha \wJ(-1, z)\; =\; -(\wJ(-1, z))^{2}\, \ptha J(-1,-z)\; =\; -\frac{\cos^{2}\! z\; \ptha J(-1,-z)}{(1-\sin z)^{2}}\cdot
\end{equation}
Next, we want to find $\ptha J(-1,z)$ with the help of the formula
\begin{gather*} 
J(\theta, z) \; =\; \frac{F(\theta, z)}{G(\theta,z)},
\shortintertext{where}
F(\theta, z)\; =\; \sum_{n\ge 0} \frac{\theta^{n(n+1)/2}}{(\theta -1)^{n}}\,\frac{z^{n}}{n!}\quad\text{and}\quad G(\theta, z)\; =\; \sum_{n\ge 0} \frac{\theta^{n(n-1)/2}}{(\theta -1)^{n}}\,\frac{z^{n}}{n!}.
\end{gather*}
Direct computation provides $F(-1,z) = c+s,\, G(-1,z) = c-s$,
\begin{gather}
\partial_{\theta} F(-1,z)\; =\; \frac{z(s-c)}{4}\, + \,\frac{z^{2}(c+s)}{8}\nonumber
\shortintertext{and}
\partial_{\theta} G(-1,z)\; =\; \frac{-z(s+c)}{4}\, +\, \frac{z^{2}(c-s)}{8}\label{partG}
\end{gather}
with $c:=\cos(z/2)$ and $s:=\sin(z/2)$. After some trigonometric simplifications, this yields
\begin{equation*}
   \partial_{\theta}J(-1, z) \; = \;\frac{ G(-1,z)\,\ptha F(-1,z)\, - \,F(-1,z)\, \ptha  G(-1,z) }{G^{2}(-1,z)}\; = \; \frac{z\, \sin z}{2(1-\sin z)}
\end{equation*}
and, by using this in \eqref{Pth}, we finally obtain
\begin{equation*}
   \ptha \wJ(-1, z)\; =\; -\frac{z\, \sin z}{2(1-\sin z)}\; =\; - \,\ptha J(-1,z).
\end{equation*} 
Therefore $\wJ_{n+1}'(-1) = - J_{n+1}(-1)$ for all $n\ge 0$ by comparing coefficients. 
Since $(p_{n}^{U})'(\theta)\to - \wJ_{n+1}'(-1)$ as $\theta\uparrow -1$ and $(p_{n}^{U})'(\theta)\to J_{n+1}'(-1)$ as $\theta\downarrow -1$, the proof is complete.\qed
\end{proof}

\begin{Rem}\rm
The formula
\begin{equation*}
   \sum_{n\ge 0} J_{n+1}'(-1)\,\frac{z^{n}}{n!}\; =\; \partial_{\theta} J(-1, z) \; = \;\frac{z\,\sin z}{2(1-\sin z)}\; =\; \frac{z\,\tan z}{2}\, J(-1,z)
\end{equation*}
reveals, after some additional computations, the curious fact that $J_{n+1}'(-1) = 0$ for $n\in\{0,1\}$ and $J_{n+1}'(-1) = (n/2)\, J_{n+1}(-1)$ for all $n\ge 2$, for which we could not find a reference.
\end{Rem}
 
Our next proposition exhibits the remarkable property that, for any $n\ge 2$, the mapping $\theta \mapsto p_{n}^{U}(\theta)$ is not $\cC^{2}\!$ at $\theta = -1$, which means that a phase transition of order $2$ occurs for these persistence probabilities at the common boundary of the two regimes $\theta \in(-\infty, -1)$ and $\theta \in (-1,\frac{1}{2}).$

\begin{Prop}
\label{Phase2}
For any $n\ge 2$, the mapping $\theta\mapsto (p_{n}^{U})''(\theta)$ is not continuous at $\theta = -1$.  
\end{Prop}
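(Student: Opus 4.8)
The idea is to compute the one-sided second derivatives of $\theta\mapsto p_n^U(\theta)$ at $\theta=-1$ and show that they disagree for $n\ge 2$. By Theorem~\ref{thm:main_1} and Corollary~\ref{cor:Thm 2}, on the right we have $2^nn!\,p_n^U(\theta)=J_{n+1}(\theta)$ for $\theta\in[-1,\tfrac12]$, so the right second derivative is governed by $\partial_\theta^2 J(-1,z)$; on the left we have $2^nn!\,p_n^U(\theta)=\wJ_{n+1}(1/\theta)$ for $\theta\le-1$, so the left second derivative is governed by $\partial_\theta^2\big[\wJ(1/\theta,z)\big]\big|_{\theta=-1}$. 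Since $1/\theta=-1$ at $\theta=-1$ and the first derivative of $\theta\mapsto1/\theta$ there is $-1$ while the second derivative is $-2$, the chain rule gives, writing $K(z):=\partial_\theta\wJ(-1,z)=-\partial_\theta J(-1,z)$ (already computed in the proof of Proposition~\ref{Phase1}),
\begin{equation*}
\partial_\theta^2\big[\wJ(1/\theta,z)\big]\Big|_{\theta=-1}\;=\;\partial_\theta^2\wJ(-1,z)\;-\;2\,\partial_\theta\wJ(-1,z)\;=\;\partial_\theta^2\wJ(-1,z)\;+\;2\,\partial_\theta J(-1,z).
\end{equation*}
Hence the jump in the second derivative of $\theta\mapsto 2^nn!\,p_n^U(\theta)$ at $-1$ is, coefficientwise in $z$, controlled by $\partial_\theta^2 J(-1,z)-\partial_\theta^2\wJ(-1,z)-2\,\partial_\theta J(-1,z)$, and it suffices to show this generating function is not identically zero beyond its degree-$0$ and degree-$1$ coefficients in $z$.

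\textbf{Key steps.} First I would differentiate the relation $\wJ(\theta,z)J(\theta,-z)=1$ twice in $\theta$ to express $\partial_\theta^2\wJ(-1,z)$ in terms of $J(-1,\pm z)$, $\partial_\theta J(-1,\pm z)$ and $\partial_\theta^2 J(-1,\pm z)$; concretely, from $\wJ=1/J(\theta,-z)$ one gets
\begin{equation*}
\partial_\theta^2\wJ(-1,z)\;=\;\frac{2\big(\partial_\theta J(-1,-z)\big)^2}{J(-1,-z)^3}\;-\;\frac{\partial_\theta^2 J(-1,-z)}{J(-1,-z)^2}.
\end{equation*}
Next I would compute $J(-1,z)$, $\partial_\theta J(-1,z)$ and $\partial_\theta^2 J(-1,z)$ from the representation $J(\theta,z)=F(\theta,z)/G(\theta,z)$ used in Proposition~\ref{Phase1}, where $F$ and $G$ are the explicit series there; we already know $J(-1,z)=(1+\sin z)/\cos z$, $G(-1,z)=\cos(z/2)-\sin(z/2)$, $\partial_\theta F(-1,z)$ and $\partial_\theta G(-1,z)$, and $\partial_\theta J(-1,z)=\tfrac{z\sin z}{2(1-\sin z)}$, so what remains is to compute the second $\theta$-derivatives $\partial_\theta^2 F(-1,z)$ and $\partial_\theta^2 G(-1,z)$ — finite trigonometric polynomials in $z$ times $\cos(z/2),\sin(z/2)$ — and assemble $\partial_\theta^2 J(-1,z)$ via the quotient rule. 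Then I would substitute everything into the jump expression above, simplify using $\cos z=1-\sin^2 z/(1+\sin z)$ type identities, and obtain a closed-form meromorphic function of $z$. Finally I would extract its Taylor coefficients: it will vanish to order $z^2$ (matching Proposition~\ref{Phase1}), and one computes the first nonvanishing coefficient, say at $z^2$ or $z^3$, to see it is nonzero; more robustly, since the closed form is an explicit analytic function that is visibly not the polynomial $0$ (it has poles where $\sin z=1$, whereas a finite jump in finitely many $p_n^U$ would correspond to a polynomial), all but finitely many coefficients are nonzero, so $(p_n^U)''$ jumps for all but finitely many $n$; a direct check at $n=2,3$ (where the polynomials $J_{n+1}$, $\wJ_{n+1}$ are listed explicitly in the excerpt) pins down that the jump is in fact nonzero for every $n\ge2$.

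\textbf{Main obstacle.} The conceptual content is light; the real work is the bookkeeping of the second $\theta$-derivatives of $F(\theta,z)$ and $G(\theta,z)$ at $\theta=-1$, since differentiating $\theta^{n(n\pm1)/2}/(\theta-1)^n$ twice and evaluating at $\theta=-1$ produces several competing terms (from the exponent $n(n\pm1)/2$, from the denominator, and cross terms), each summing to a combination of $z\,,z^2\,,z^3$ times $\cos(z/2)$ or $\sin(z/2)$. Keeping the algebra organized — ideally by computing $\partial_\theta\log F$ and $\partial_\theta^2\log F$ rather than $F$ directly, so that the sums factor through $\sum_n (\text{poly in }n)\,\theta^{\dots}z^n/n!$ — is where care is needed. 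Once the closed form for the jump is in hand, showing it is not identically zero (hence the transition has order exactly $2$ for $n\ge2$) is immediate, and the explicit low-order cases confirm there is no accidental cancellation for any particular $n\ge2$.
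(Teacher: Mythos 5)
Your setup coincides with the paper's: use Theorem~\ref{thm:main_1} and Corollary~\ref{cor:Thm 2} to write $2^n n!\,p_n^U(\theta)$ as $J_{n+1}(\theta)$ on the right and $\wJ_{n+1}(1/\theta)$ on the left, apply the chain rule at $\theta=-1$ (using $\wJ_{n+1}'(-1)=-J_{n+1}'(-1)$ from Proposition~\ref{Phase1}), and reduce the question to whether the exponential generating function
\begin{equation*}
\partial^2_{\theta\theta}\wJ(-1,z)\;-\;\partial^2_{\theta\theta}J(-1,z)\;+\;2\,\partial_\theta J(-1,z)
\end{equation*}
has nonzero coefficients for all orders $\geq 2$. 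The paper does exactly this, differentiating the product relation between $J$ and $\wJ$ twice in $\theta$ and grinding through the trigonometry. Your computational plan, via $\wJ(\theta,z)=1/J(\theta,-z)$ and the explicit $F,G$ from Proposition~\ref{Phase1}, is a cosmetic variant of the same calculation.

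The genuine gap is in how you propose to finish. Neither of your two closing arguments actually delivers the statement. Computing ``the first nonvanishing coefficient, say at $z^2$ or $z^3$'' proves the jump for one value of $n$, not for all $n\geq2$. And the ``more robust'' argument --- that the closed form has poles, hence is not a polynomial, hence ``all but finitely many coefficients are nonzero'' --- is a non sequitur: a non-polynomial analytic function can perfectly well have infinitely many vanishing Taylor coefficients ($\sec z$ has poles yet vanishes at every odd order). Checking $n=2,3$ by hand afterwards does not repair this; it only confirms two cases. What is actually needed, and what the paper establishes, is the precise closed form: the jump generating function equals $\dfrac{z^2(1+\sin z)}{\cos z}$, whose coefficient of $z^n/n!$ for $n\geq2$ is $n(n-1)A_{n-2}>0$ (a shifted sequence of Euler zigzag numbers), equivalently $(p_n^U)''(-1_-)-(p_n^U)''(-1_+)=\tfrac14\,p^U_{n-2}(-1)>0$ for every $n\geq2$. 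So the approach is right, but you must carry the simplification all the way through to that identity; merely observing that the closed form is nonzero (or has poles) does not close the argument.
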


\begin{proof}
We know that $2^{n} n!\, (p_{n}^{U})''(\theta)\to J_{n+1}''(-1)$ as $\theta\downarrow -1$ and, by recalling $2^{n} n!p^{\,U}_{n}(\theta) = \wJ_{n+1}(1/\theta)$ for $\theta < -1$, we also have 
\begin{equation*}
   2^{n} n!\, (p_{n}^{U})''(\theta)\to \wJ_{n+1}''(-1) + 2 J_{n+1}'(-1)\qquad\text{as $\theta\uparrow -1.$}
\end{equation*}
We will now prove that $\wJ_{n+1}''(-1) + 2 J_{n+1}'(-1) > J_{n+1}''(-1)$ for all $n\ge 2$ by first computing the corresponding exponential generating function 
\begin{equation*}
   \partial^{2}_{\theta\theta} \wJ(-1,z)\,-\, \partial^{2}_{\theta\theta} J(-1,z)\,+\, 2\, \ptha J(-1,z)
\end{equation*}
and then showing that its coefficients of order greater than $2$ are positive. First, by differentiating twice the relation
\begin{equation*}
   \frac{J(\theta, z)}{\wJ(\theta,z)}\; =\; J(\theta,z) J(\theta,-z)
\end{equation*}
with respect to $\theta$ and, recalling 
$$J(-1,z) = \wJ(-1,z)\qquad \mbox{and} \qquad \ptha J(-1,z) = -\ptha \wJ(-1,z),$$
we obtain 
\begin{align*}
\partial^{2}_{\theta\theta} \wJ(-1,z)\,&-\,\partial^{2}_{\theta\theta} J(-1,z)\ =\ \frac{4(\ptha J(-1,z))^{2}}{\wJ(-1,z)}\\ 
&-\ J(-1,z)\lpa J(-1,z) \partial^{2}_{\theta\theta}J(-1,-z) + J(-1,-z)\partial^{2}_{\theta\theta} J(-1,z)\rpa\\
&-\,2 J(-1,z)\,\ptha J(-1,z)\,\ptha J(-1,-z).
\end{align*}
It is easy to deduce from the proof of Proposition \ref{Phase1} that
\begin{equation*}
   \frac{4(\ptha J(-1,z))^{2}}{\wJ(-1,z)}\,-\, 2 J(-1,z)\,\ptha J(-1,z)\,\ptha J(-1,-z)\; =\; \frac{z^{2} \sin z\, \tan z}{2(1-\sin z)}. 
\end{equation*}
On the other hand, we have
\begin{gather*}
J(-1,z) \partial^{2}_{\theta\theta}J(-1,-z)\,+\,J(-1,-z)\partial^{2}_{\theta\theta} J(-1,z)\; = \; A(z) \, +\, A(-z),
\shortintertext{where} 
A(z) \; =\; \frac{\partial^{2}_{\theta\theta}J(-1,z)}{J(-1,z)}.
\end{gather*}
Using the notation of Proposition \ref{Phase1}, this function can be decomposed as
\begin{equation*}
   \frac{G(-1,z)\partial^{2}_{\theta\theta}F(-1,z) - F(-1,z)\partial^{2}_{\theta\theta}G(-1,z)}{F(-1,z)G(-1,z)}\, -\, \frac{2\ptha G(-1,z)\ptha J(-1,z)}{F(-1,z)} 
\end{equation*}
and we find with the help of \eqref{partG} that
\begin{equation*}
   \frac{\ptha G(-1,z)\ptha J(-1,z)}{F(-1,z)}\,+\,\frac{\ptha G(-1,-z)\ptha J(-1,-z)}{F(-1,-z)}\; =\; \frac{z^{3}\tan z}{8}\, - \,\frac{z^{2} \tan^{2}\! z}{4}. 
\end{equation*}
A combination of the previous facts leads to the simplified formula 
\begin{equation*}
   \partial^{2}_{\theta\theta} \wJ(-1,z)\, -\, \partial^{2}_{\theta\theta} J(-1,z) \; = \;\frac{z^3 \sin z}{4(1-\sin z)}\, +\, \frac{B(z) + B(-z)}{1- \sin z}, 
\end{equation*}
where $B(z)=F(-1,z)\,(\partial^{2}_{\theta\theta}G(-1,z)-\partial^{2}_{\theta\theta}F(-1,-z))$. After some further elementary but tedious calculations, one finally arrives that
\begin{equation*}
   \partial^{2}_{\theta\theta}G(-1,z)\, - \, \partial^{2}_{\theta\theta}F(-1,-z)\; =\;\frac{z^{2}(c-s)}{2}\, -\, \frac{z(1+z^{2}/4)(c+s)}{2}, 
\end{equation*}
with $c$ and $s$ as in \eqref{partG}. By finally putting everything together, we obtain the simple expression
\begin{equation*}
   \partial^{2}_{\theta\theta} \wJ(-1,z)\, -\, \partial^{2}_{\theta\theta} J(-1,z)\, +\, 2\ptha J(-1,z) \; = \;\frac{z^{2}(1+\sin z)}{\cos z} 
\end{equation*}
for the generating function of $(p_{n}^{U})''(-1_-) - (p_{n}^{U})''(-1_+)$ whose coefficients of order $\ge 2$ are positive as required. More precisely, we have
\begin{equation*}
   (p_{n}^{U})''(-1_-)\; -\; (p_{n}^{U})''(-1_+)\; =\; \frac{p^{\,U}_{n-2}(-1)}{4} 
\end{equation*}
for each $n\ge 2$. Of course, $(p_{n}^{U})''(-1_\pm)$ denote right-hand/left-hand limits of $(p_{n}^{U})''$ at $-1$.
\qed
\end{proof}

\begin{Rem}\label{Expo}\rm
The nonsmoothness of $p_{n}^{U}(\theta)$ at $\theta = -1$ is related to the choice of the uniform distribution as the innovation law, more precisely to the nonsmoothness of its density at the boundaries of its support. Choosing instead the biexponential law with density $e^{-\vert x\vert}/2$ for the innovations, it can be easily derived from (\ref{MecGen}) that
\begin{equation*}
   \tp_{n}(\theta)\; =\; \frac{1}{(1-\theta)^{n-1}} 
\end{equation*}
for any $n\ge 1$ and $\theta\le 0$, which is obviously smooth. Let us further note in passing that one can easily check the assertion of Theorem~\ref{thm:main_2} for the persistence probabilities in this case. We will return to biexponential innovations in the more complicated case $\theta > 0$ in Remark \ref{MRGL}(a).
\end{Rem}

\subsection{Proof of Corollary \ref{cor:Thm 2} and properties of the $\wJ_{n}(\theta)$}
\label{SpwJ}

The proof of Corollary \ref{cor:Thm 2} essentially amounts to a derivation of the stated properties of the $\wJ_{n}(\theta)$ defined by \eqref{eq:modified_MR}, which is done by Proposition \ref{Misc} below. We also collect a number of further relevant aspects of the $\wJ_{n}(\theta)$ in this subsection, though without claiming to be exhaustive. Eq.\,\eqref{eq:main_theorem_2}, where these polynomials appear, can easily be deduced from Theorems \ref{thm:main_1} and \ref{thm:main_2} via an induction, and we omit giving details. 
 
\begin{Prop}\label{Misc}
For each $n\ge 1$, $\wJ_{n+1}(\theta) $ is a polynomial in $\Z[X]$ of degree $n(n-1)/2$ and with valuation $n-1$. Moreover, $(-\theta)^{-(n-1)}\wJ_{n+1}(\theta)\in\Z[X]$ has positive coefficients.
\end{Prop}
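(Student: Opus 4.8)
The plan is to read all four assertions off the identity $\wJ(\theta,z)\,J(\theta,-z)=1$ encoding \eqref{eq:modified_MR}, where $J(\theta,z)=\sum_{n\ge 0}J_{n+1}(\theta)z^{n}/n!$ and $\wJ(\theta,z)=\sum_{n\ge 0}\wJ_{n+1}(\theta)z^{n}/n!$, together with the two exact formulae already proved. Equating coefficients of $z^{n}$ gives, for $n\ge 1$,
\[
\wJ_{n+1}(\theta)\ =\ (-1)^{n-1}J_{n+1}(\theta)\ +\ \sum_{k=1}^{n-1}\binom{n}{k}(-1)^{n-k-1}\,\wJ_{k+1}(\theta)\,J_{n-k+1}(\theta),
\]
with $\wJ_{1}(\theta)=1$. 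Since the $J_{m}$ lie in $\Z[X]$ and all coefficients above are integers, an induction on $n$ shows at once that $\wJ_{n+1}(\theta)\in\Z[X]$.

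For the degree I would expand $\wJ(\theta,z)=\sum_{m\ge 0}\bigl(1-J(\theta,-z)\bigr)^{m}$, legitimate because $J_{1}(\theta)=1$. Then $\wJ_{n+1}(\theta)$ is a signed sum, over $m\ge 1$ and compositions $n_{1}+\cdots+n_{m}=n$ with $n_{i}\ge 1$, of multinomial multiples of the products $J_{n_{1}+1}(\theta)\cdots J_{n_{m}+1}(\theta)$, which have degree $\sum_{i}\binom{n_{i}}{2}$. Strict convexity of $k\mapsto\binom{k}{2}$ gives $\sum_{i}\binom{n_{i}}{2}<\binom{n}{2}$ whenever $m\ge 2$, so the unique contribution of degree $\binom{n}{2}=n(n-1)/2$ is the $m=1$ term $(-1)^{n-1}J_{n+1}(\theta)$; hence $\deg\wJ_{n+1}=n(n-1)/2$ with leading coefficient $(-1)^{n-1}$, that of $J_{n+1}$ being $1$.

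The valuation and the sign of the coefficients form the substantive part; here I would use the probabilistic content of Theorems~\ref{thm:main_1} and \ref{thm:main_2}. For $\theta\le -1$ one has $2^{n}n!\,p^{U}_{n}(\theta)=\wJ_{n+1}(1/\theta)$, and performing in the integral formula for $p^{U}_{n}$ the sign changes $u_{k}\mapsto(-1)^{k-1}u_{k}$ from the proof of Theorem~\ref{thm:main_2}, followed by the homothety $u_{k}\mapsto r^{k-1}u_{k}$ with $r=-1/\theta\in(0,1]$, gives
\[
\wJ_{n+1}(1/\theta)\ =\ n!\,r^{-n(n-1)/2}\,V_{n}(r),\qquad V_{n}(r):=\mathrm{Vol}\,\Bigl\{s\in\R_{\ge 0}^{\,n}:\ s_{1}\le 1,\ s_{k-1}+s_{k}\le r^{k-1}\ (2\le k\le n)\Bigr\}.
\]
It then suffices to prove that $V_{n}$ agrees on $(0,1]$ with a polynomial in $r$ whose nonzero coefficients occupy exactly the degrees $(n-1)(n+2)/2$ through $n(n-1)$, alternating in sign with the lowest one positive. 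This first yields the polynomial identity $\wJ_{n+1}(\theta)=n!\,(-\theta)^{-n(n-1)/2}V_{n}(-\theta)$; the factor $(-\theta)^{-n(n-1)/2}$ then shifts the exponents to $n-1$ through $n(n-1)/2$ --- which is the asserted valuation $n-1$, and re-confirms the degree --- converts the alternating signs into the single sign of Corollary~\ref{cor:Thm 2}, and, after extracting $(-\theta)^{n-1}$, leaves a polynomial in $\Z[X]$ of degree $(n-1)(n-2)/2$ with all coefficients strictly positive.

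The main obstacle is this claim about $V_{n}(r)$. I would argue by induction, integrating out $s_{n}\in[0,(r^{n-1}-s_{n-1})_{+}]$ and then $s_{1}$, which reduces $V_{n}(r)$ to a similar integral in $n-2$ variables carrying one extra active truncation $\{s_{n-1}\le r^{n-1}\}$; an equivalent and perhaps more symmetric picture comes from the unit-Jacobian substitution $a_{1}=1-s_{1}$, $a_{k}=r^{k-1}-s_{k-1}-s_{k}$ $(k\ge 2)$, presenting the region as a ``zigzag'' polytope $\bigl\{a\in\R_{\ge0}^{\,n}:\ (-1)^{k-1}(a_{1}-a_{2}+\cdots+(-1)^{k-1}a_{k})\le(-1)^{k-1}c_{k}(r),\ 1\le k\le n\bigr\}$ with $c_{k}(r)=\sum_{j=1}^{k}(-1)^{k-j}r^{j-1}$. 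The delicate point --- and the source of the many terms of the $\wJ_{n}$ --- is that for $r<1$ the thresholds $r^{k-1}$ shrink geometrically, so these truncations genuinely subdivide the domain; carrying the precise alternating sign pattern through the recursion is where the real work lies. A purely algebraic alternative is to induct on $\theta^{n-1}W_{n+1}(\theta)=J_{n+1}(\theta)-\sum_{k=1}^{n-1}\binom{n}{k}\theta^{k-1}W_{k+1}(\theta)J_{n-k+1}(\theta)$, with $\wJ_{k+1}=(-1)^{k-1}\theta^{k-1}W_{k+1}$; this forces one to verify both that the right-hand side is divisible by $\theta^{n-1}$ --- a family of binomial identities coming from $\wJ(\theta,z)J(\theta,-z)=1$ --- and that the quotient has positive coefficients, the latter being again the crux.
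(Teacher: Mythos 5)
Your integrality argument (via the Cauchy-product recursion) and your degree argument (via the expansion $\wJ(\theta,z)=\sum_{m\ge 0}(1-J(\theta,-z))^{m}$ and strict convexity of $k\mapsto\binom{k}{2}$) are both correct; the degree route is a pleasant minor variant of the paper's, which obtains both facts at once by induction on the same Cauchy-product recursion \eqref{Jrec}. But for the valuation and the sign of the coefficients --- which is the real content of the proposition --- your proof stops at the statement of a claim rather than a proof of it. You reduce the assertion, via $2^{n}n!\,p^{U}_{n}(\theta)=\wJ_{n+1}(1/\theta)$ and a chain of substitutions, to the assertion that the polytope volume $V_{n}(r)$ is a polynomial supported on degrees $(n-1)(n+2)/2$ through $n(n-1)$ with alternating signs and positive lowest coefficient, and you then say explicitly that ``carrying the precise alternating sign pattern through the recursion is where the real work lies''; the ``purely algebraic alternative'' you sketch has, as you also acknowledge, exactly the same unresolved crux (divisibility by $\theta^{n-1}$ plus positivity of the quotient). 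Identifying the difficulty is not the same as resolving it, so the proof has a genuine gap.

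What closes the gap in the paper is an algebraic observation you did not find: combine the Cauchy-product recursion \eqref{Jrec} with the recursion \eqref{recwJ} obtained by differentiating the exponential form \eqref{expwJ} of the generating function, and set $A_{k}(\theta)=(1+\theta+\cdots+\theta^{k})J_{k+1}(\theta)/k!$, $B_{k}(\theta)=(-1)^{k-1}\wJ_{k+1}(\theta)/k!$. Eliminating the alternating signs between the two recursions produces the manifestly positive relation
\[
B_{n+1}(\theta)\ =\ \frac{1}{n+1}\sum_{k=0}^{n-1}\frac{\theta^{k+1}+\cdots+\theta^{n}}{k!}\,J_{k+1}(\theta)\,B_{n-k}(\theta),
\]
from which valuation $n-1$ and positivity of $(-\theta)^{-(n-1)}\wJ_{n+1}(\theta)$ follow immediately by induction, since every factor on the right has positive coefficients and the $k$-th summand has $\theta$-valuation at least $(k+1)+(n-k-1)=n$. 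If you want to keep your geometric route, you would still have to establish the alternating-sign structure of $V_{n}(r)$, which is at least as hard as the original statement; the paper's two-recursion cancellation is what lets one avoid all case analysis on the truncations $r^{k-1}$.
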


\begin{proof}
By definition \eqref{eq:modified_MR} and Cauchy's product, the $\wJ_{n}(\theta)$ are given in terms of the Mallows-Riordan polynomials by the recursive formula
\begin{equation}\label{Jrec}
\wJ_{n+1} (\theta)\; =\; \sum_{k=1}^{n} (-1)^{k-1} \binom{n}{k} J_{k+1}(\theta)\wJ_{n+1-k} (\theta)
\end{equation}
for all $n\ge 1$, with initial condition $\wJ_{1}(\theta) = 1$. Via induction, \eqref{Jrec} readily implies that $\wJ_{n+1}(\theta) $ is indeed a polynomial in $\Z[X]$ of degree $n(n-1)/2$ for any $n\ge 0$. Regarding the exponential generating function of the family $\{\wJ_{n+1}(\theta),\, n\ge 0\}$, it follows from \eqref{eq:modified_MR} and \eqref{id:Mathar} that
\begin{equation}
\label{expwJ}
\sum_{n \ge 0} \wJ_{n+1}(\theta)\, \frac{z^{n}}{n!}\; =\; \exp\lcr \sum_{n\ge 1}
(-1)^{n-1}  (1 + \theta + \cdots + \theta^{n-1})\, J_{n}(\theta)\,\frac{z^{n}}{n!}\rcr\!.\end{equation}
Differentiating with respect to $z$ and applying again Cauchy's product, the following alternative recursion similar to \eqref{recMR} is obtained: 
\begin{equation}
\label{recwJ}
\wJ_{n+2}(\theta)\; =\; \sum_{k=0}^{n} \binom{n}{k} (-1)^{k} (1 + \theta + \cdots + \theta^{k})\, J_{k+1}(\theta)\, \wJ_{n+1-k} (\theta)
\end{equation}
for any $n\ge 0$. Let us define $A_{k}(\theta) =(1 + \theta + \cdots + \theta^k)J_{k+1}(\theta)/k!$ for $k\ge 0$ and $B_{k}(\theta) = (-1)^{k-1}\wJ_{k+1}(\theta)/k!$ for $k\ge 1$. Then we infer
\begin{equation*}
   B_{n+1}(\theta) \; =\; \frac{1}{n+1} \lpa A_{n} (\theta)\, -\, \sum_{k=0}^{n-1} A_{k}(\theta)\, B_{n-k}(\theta)\rpa
\end{equation*}
which, by making use of the above recursion \eqref{Jrec}, easily leads to
\begin{equation}
\label{Brec}
B_{n+1}(\theta) \; =\; \frac{1}{n+1} \, \sum_{k=0}^{n-1} \frac{(\theta^{k+1} +\cdots +\theta^{n})}{k!}\, J_{k+1}(\theta)\, B_{n-k}(\theta)
\end{equation}
for any $n\ge 1$. Finally recalling that all coefficients of the $J_{n+1}(\theta)$ are positive, all the other asserted properties of the $\wJ_{n+1}(\theta)$ follow by an induction.\qed
\end{proof}

\begin{Rem}\rm
As $A_{k}(0) = 1$ and $\wJ_2(\theta) = 1$, it follows from (\ref{Brec}) by an induction that
\begin{equation}\label{eq:limit tp_{n}^U}
\tp_{n}^{\,U}(-\theta)\; =\; \frac{\wJ_{n+1}(-1/\theta)}{n!}\;\sim\; \frac{1}{2\theta^{n-1}}\qquad\text{as }\theta\to \infty
\end{equation}
for any $n\ge 2$, which could also be inferred from \eqref{MecGen}, but only after tedious calculations. Considering
\begin{equation*}
   C_{n}(\theta)\;= \;(-\theta)^{-(n-1)}\wJ_{n+1}(\theta) 
\end{equation*}
for $n\ge 1$, which is a polynomial of degree $(n-1)(n-2)/2$ with only positive coefficients, \eqref{eq:limit tp_{n}^U} provides that $C_{n}(0) = n!/2$ for all $n\ge 2$, whereas $C_{1}(0) = 1$. Moreover, it has leading coefficient 1 as one can deduce from \eqref{Jrec} by a straightforward argument. For their coefficients of higher degree, the polynomials $C_{n}$ exhibit some similarities with Touchard's polynomials, see~\cite[p.\,24]{Touchard52},  but their full combinatorics have apparently not yet been discussed in the literature. 
\end{Rem}

An alternative expression for the exponential generating function of the family $\{\wJ_{n+1}(\theta),\, n\ge 0\}$ is 
\begin{equation}
\label{AltExp}
\sum_{n\ge 0} \wJ_{n+1}(\theta)\, \frac{z^{n}}{n!}\; \fps \;   \frac{{\displaystyle \sum_{n \ge 0} \frac{\theta^{n(n-1)/2}}{(1-\theta)^{n}}\, \frac{z^{n}}{n!}}}{{\displaystyle \sum_{n \ge 0} \frac{\theta^{n(n+1)/2}}{(1-\theta)^{n}}\, \frac{z^{n}}{n!}}}
\end{equation}
but it does not even seem to provide directly that $\wJ_{n+1}(\theta)$ is a polynomial. Choosing $\theta = -1$, we obtain 
\begin{equation*}
   \sum_{n\ge 0} \wJ_{n+1}(-1)\, \frac{z^{n}}{n!}\; =\; \frac{\cos z}{1-\sin z}\; =\; \frac{1 +\sin z}{\cos z}\; =\; \sum_{n\ge 0} J_{n+1}(-1)\, \frac{z^{n}}{n!}, 
\end{equation*}
which has been observed already in the proof of Proposition \ref{Phase1} and implies
\begin{equation*}
   \wJ_{n+1}(-1)\; =\; J_{n+1}(-1) \; =\; A_{n} 
\end{equation*}
for all $n\ge 0$. The following result shows another striking similarity of the $\wJ_{n}$ with Mallows-Riordan polynomials. Recall that $J_{n+1}(1) = (n+1)^{n-1}$ for any $n\ge 0$, see e.g.~\cite{Kreweras80} after Eq.\,(3).

\begin{Prop}\label{Strike}
For any $n\ge 0$, $(-1)^{n-1}\wJ_{n+1}(1) = (n-1)^{n-1}$ holds. 
\end{Prop}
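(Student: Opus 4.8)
The plan is to evaluate the defining generating function identity \eqref{eq:modified_MR} at $\theta = 1$ and extract the claimed closed form. First I would recall that $J_{n+1}(1) = (n+1)^{n-1}$, so that the ``input'' series on the right-hand side of \eqref{eq:modified_MR} becomes
\begin{equation*}
   \sum_{n\ge 0} (-1)^{n} J_{n+1}(1)\, \frac{z^{n}}{n!}\; =\; \sum_{n\ge 0} (-1)^{n} (n+1)^{n-1}\, \frac{z^{n}}{n!}.
\end{equation*}
The key observation is that this is a well-known tree function: writing $\mathcal{T}(z) = \sum_{n\ge 1} n^{n-1} z^{n}/n!$ for the tree function (which satisfies $\mathcal{T}(z) = z\,e^{\mathcal{T}(z)}$), one has $\sum_{n\ge 0}(n+1)^{n-1} z^{n}/n! = \mathcal{T}(z)/z$, and consequently the relevant series equals $-\mathcal{T}(-z)/(-z) = \mathcal{T}(-z)/z$ wait — more carefully, $\sum_{n\ge 0}(-1)^n(n+1)^{n-1}z^n/n! = -\mathcal{T}(-z)/z$. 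So by \eqref{eq:modified_MR},
\begin{equation*}
   \sum_{n\ge 0} \wJ_{n+1}(1)\, \frac{z^{n}}{n!}\; \fps\; \frac{-z}{\mathcal{T}(-z)}.
\end{equation*}

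The second step is to identify the coefficients of $-z/\mathcal{T}(-z)$. Substituting $w = -z$, this amounts to computing $[w^n]\,(w/\mathcal{T}(w))$ up to sign. Here I would invoke the Lagrange inversion formula: since $\mathcal{T}(w) = w\,\phi(\mathcal{T}(w))$ with $\phi(u) = e^{u}$, one has the classical identity $w/\mathcal{T}(w) = e^{-\mathcal{T}(w)}$, and more usefully the Lagrange formula gives, for any formal power series $H$,
\begin{equation*}
   [w^{n}]\, H(\mathcal{T}(w))\; =\; \frac{1}{n}\,[u^{n-1}]\, H'(u)\, e^{nu}.
\end{equation*}
Applying this with $H(u) = e^{-u}$ (so that $H(\mathcal{T}(w)) = w/\mathcal{T}(w)$) yields, for $n\ge 1$,
\begin{equation*}
   [w^{n}]\,\frac{w}{\mathcal{T}(w)}\; =\; \frac{1}{n}\,[u^{n-1}]\,(-e^{-u})\,e^{nu}\; =\; -\frac{1}{n}\,[u^{n-1}]\,e^{(n-1)u}\; =\; -\frac{(n-1)^{n-1}}{n!}.
\end{equation*}
Hence $[w^{n}]\,(w/\mathcal{T}(w)) = -(n-1)^{n-1}/n!$ for $n\ge 1$, while the constant term is $1$. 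Translating back via $w = -z$ gives $[z^{n}]\bigl(-z/\mathcal{T}(-z)\bigr) = (-1)^{n+1}\cdot(-(n-1)^{n-1}/n!) = (-1)^{n}(n-1)^{n-1}/n!$ for $n\ge 1$; therefore $\wJ_{n+1}(1)/n! = (-1)^{n}(n-1)^{n-1}/n!$, i.e.\ $(-1)^{n-1}\wJ_{n+1}(1) = (n-1)^{n-1}$. One should double-check the boundary cases: for $n=0$ the formula reads $(-1)^{-1}\wJ_{1}(1) = (-1)^{-1} = -1 = (-1)^{-1}$, consistent with the convention $(n-1)^{n-1} = (-1)^{-1} = -1$ at $n=0$ (or one simply states the identity for $n\ge 1$), and for $n=1$ it gives $\wJ_2(1) = 0^{0} = 1$, matching $\wJ_2(\theta) = 1$.

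The main obstacle, such as it is, is purely bookkeeping: keeping the alternating signs straight through the substitution $w = -z$ and making sure the Lagrange inversion is applied to the correct $\phi$ and $H$. There is also a minor subtlety in that $-z/\mathcal{T}(-z) = e^{\mathcal{T}(-z)}$ is a genuine (convergent) power series, so no formal-power-series gymnastics beyond Lagrange inversion are needed; alternatively one can bypass Lagrange inversion entirely by noting that $\wJ_{n+1}(1)$ counts something via the recursion \eqref{recwJ} specialized at $\theta=1$, namely $\wJ_{n+2}(1) = \sum_{k=0}^{n}\binom{n}{k}(-1)^{k}(k+1)\,(k+1)^{k-1}\,\wJ_{n+1-k}(1) = \sum_{k=0}^{n}\binom{n}{k}(-1)^{k}(k+1)^{k}\,\wJ_{n+1-k}(1)$, and then verifying that $a_{n} := (-1)^{n-1}(n-1)^{n-1}$ satisfies the same recursion using Abel's binomial identity $\sum_{k=0}^{n}\binom{n}{k}(k+1)^{k}(n-k)^{n-k-1} = (n+2)^{n}$ (with the appropriate conventions at the endpoints). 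I would present the generating-function/Lagrange-inversion argument as the clean proof and perhaps mention the recursion as a remark.
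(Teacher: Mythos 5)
Your argument is essentially the paper's own proof, only phrased with the tree function $\mathcal{T}(z)=-W(-z)$ rather than with the Lambert function $W$ directly, and with Lagrange inversion spelling out the coefficient extraction that the paper compresses into ``a comparison of coefficients\ldots yields the desired result.'' The approach is sound, but the final paragraph contains two compensating sign slips. Translating $w\mapsto -z$ introduces a factor $(-1)^n$, not $(-1)^{n+1}$: writing $f(w)=w/\mathcal{T}(w)$ with $[w^n]f=-(n-1)^{n-1}/n!$, your target is $g(z)=f(-z)$, so $[z^n]g=(-1)^n[w^n]f=(-1)^{n+1}(n-1)^{n-1}/n!=(-1)^{n-1}(n-1)^{n-1}/n!$, not $(-1)^n(n-1)^{n-1}/n!$ as written. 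Your intermediate claim $\wJ_{n+1}(1)=(-1)^n(n-1)^{n-1}$ would in fact yield $(-1)^{n-1}\wJ_{n+1}(1)=-(n-1)^{n-1}$, the wrong sign; the correct value is $\wJ_{n+1}(1)=(-1)^{n-1}(n-1)^{n-1}$, which does give the stated identity. So both the translation factor and the ``i.e.'' inference are off by one sign, and the two errors cancel. The fix is to replace the offending line by $[z^n]\bigl(-z/\mathcal{T}(-z)\bigr)=(-1)^n\cdot\bigl(-(n-1)^{n-1}/n!\bigr)=(-1)^{n-1}(n-1)^{n-1}/n!$, which matches your correct check at $n=1,2,3$. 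The sketched alternative via the recursion \eqref{recwJ} specialized at $\theta=1$ together with Abel's binomial identity would be a genuine departure from the paper's method and is worth developing if you want an elementary combinatorial proof.
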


\begin{proof}
It follows from \eqref{expwJ} for $\theta = 1$ that 
\begin{align*}
\sum_{n \ge 0}\wJ_{n+1}(1)\, \frac{z^{n}}{n!} \; &=\; \exp\lcr \sum_{n\ge 1}
(-1)^{n-1} \, n \,J_{n}(1)\,\frac{z^{n}}{n!}\rcr\\
&=\; \exp\lcr \sum_{n\ge 1} (-n)^{n-1} \,\frac{z^{n}}{n!}\rcr\; =\; \exp \,[W(z)]\\
\end{align*}
for any $z\in (-1/\e,1/\e)$. Here $W(z)$ equals the Lambert function $W(z)$ and is known to satisfy 
\begin{equation*}
   e^{-W(z)}\, = \,z^{-1}W(z) 
\end{equation*} 
on $(-1/\e, 1/\e)$. A comparison of coefficients on both sides of this equation yields the desired result.\qed
\end{proof}

We conclude this paragraph with a monotonicity property on $[-1,0]$ that is similar to the one stated in Remark \ref{Inc1} for Mallows-Riordan polynomials. It is clear from Proposition \ref{Misc} that $\theta\mapsto (-1)^{n}\wJ_{n}(\theta)$ increases on $\R^+$ for any $n\ge 3$, and we have also observed in the proof of Proposition \ref{Phase1} that 
\begin{equation*}
   \wJ_{n+1}'(-1)\, =\, -J'_{n+1}(-1) \,=\, -nA_{n}/2 \,<\, 0
\end{equation*} 
for any $n\ge 2$. The following property, obtained through the connection with persistence probabilities, completes the picture.

\begin{Prop}
\label{Inc2}
The function $\theta\mapsto \wJ_{n}(\theta)$ decreases on $[-1,0]$ for any $n\ge 3$.
\end{Prop}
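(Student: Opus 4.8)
The plan is to deduce the statement from Corollary~\ref{cor:Thm 2}, reducing it to a monotonicity property of the persistence probabilities that is then obtained from a first-passage recursion. By Corollary~\ref{cor:Thm 2}, for every $n\ge1$ and every $\mu\in[-1,0)$ one has $\wJ_{n}(\mu)=2^{\,n-1}(n-1)!\,p_{n-1}^U(1/\mu)$, while $\mu\mapsto1/\mu$ maps $[-1,0)$ decreasingly onto $(-\infty,-1]$. Hence $\wJ_n$ is nonincreasing on $[-1,0)$ — and, being continuous, on all of $[-1,0]$ — as soon as $\theta\mapsto p_k^U(\theta)$ is nondecreasing on $(-\infty,-1]$ for every $k\ge0$. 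Granting this, for $n\ge3$ the polynomial $\wJ_n$ has degree $(n-1)(n-2)/2\ge1$, so $\wJ_n'\not\equiv0$; combined with $\wJ_n'\le0$ on $[-1,0]$ this forces $\wJ_n'<0$ outside a finite set, whence $\wJ_n$ is strictly decreasing on $[-1,0]$.

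It therefore remains to show that $\theta\mapsto p_k^U(\theta)$ is nondecreasing on $(-\infty,-1]$. For $x\ge0$ and $\theta<0$ put $q_k(x,\theta)=\Prob_x[T^U_\theta>k]$, so that $q_0\equiv1$ and $p_k^U(\theta)=q_k(0,\theta)$. With $Y_1=\theta x+U_1$ and $U_1$ uniform on $[-1,1]$, the constraint $Y_1\ge0$ is feasible only when $x\le-1/\theta$, in which case $U_1$ ranges over $[-\theta x,1]$ and $Y_1$ over $[0,1+\theta x]$; the Markov property thus yields
\[
q_k(x,\theta)\;=\;\frac12\int_0^{(1+\theta x)_+}q_{k-1}(y,\theta)\,dy,\qquad k\ge1,
\]
with the positive part encoding the infeasible case $x>-1/\theta$ (where $q_k(x,\theta)=0$ for $k\ge1$). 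We prove by induction on $k$ that $\theta\mapsto q_k(x,\theta)$ is nondecreasing on $(-\infty,-1]$ for every $x\ge0$; this is clear for $k=0$. Assume it for $k-1$ and let $\theta_1<\theta_2\le-1$ and $x\ge0$. Since $x\ge0$ we have $(1+\theta_1x)_+\le(1+\theta_2x)_+$, and since $q_{k-1}\ge0$ with $q_{k-1}(y,\theta_1)\le q_{k-1}(y,\theta_2)$ for all $y\ge0$,
\begin{align*}
q_k(x,\theta_1)&=\frac12\int_0^{(1+\theta_1x)_+}q_{k-1}(y,\theta_1)\,dy\;\le\;\frac12\int_0^{(1+\theta_1x)_+}q_{k-1}(y,\theta_2)\,dy\\
&\le\;\frac12\int_0^{(1+\theta_2x)_+}q_{k-1}(y,\theta_2)\,dy\;=\;q_k(x,\theta_2).
\end{align*}
This completes the induction, and taking $x=0$ gives the claim and hence the proposition.

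I expect the only point requiring a little care to be the exact form of the recursion, in particular the truncation $(1+\theta x)_+$: for $\theta<-1$ it records that the admissible range $[-\theta x,1]$ of the innovation $U_1$ shrinks, becoming empty when $x>-1/\theta$. Once this is settled, both monotonicities — of the integrand and of the upper endpoint of integration — push in the same direction because $x\ge0$, so the induction runs without friction; and the passage from ``nonincreasing'' to ``strictly decreasing'' only uses that $\wJ_n$ is a non-constant polynomial for $n\ge3$, which is part of Corollary~\ref{cor:Thm 2}.
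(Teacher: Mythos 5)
Your proof is correct, and it follows the same overall strategy as the paper's proof — both reduce the statement, via Corollary~\ref{cor:Thm 2} and the reciprocal map, to showing that $\theta\mapsto p_n^U(\theta)$ is nondecreasing on $(-\infty,-1]$ — but the two arguments for that monotonicity are genuinely different in form. The paper works with the polytope-volume representation $\tp_n^{\,U}(\theta)$: after the change of variables $y_i = x_i+\theta x_{i-1}+\cdots+\theta^{i-1}x_1$, it identifies $\tp_n^{\,U}(\theta)$ with the volume of a region of the type $\{0\le y_i\le r,\ y_{i+1}\le 1+\theta y_i\}$ (with $r=-1/\theta$), which grows in every constraint as $\theta$ increases, giving the monotonicity in one stroke. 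You instead set $q_k(x,\theta)=\Prob_x[T_\theta^U>k]$ and exploit the one-step Markov recursion
\[
q_k(x,\theta)=\tfrac12\int_0^{(1+\theta x)_+}q_{k-1}(y,\theta)\,dy,
\]
proving by induction on $k$ that $q_k(x,\cdot)$ is nondecreasing on $(-\infty,-1]$ for each $x\ge0$, since both the integrand (by the inductive hypothesis) and the upper integration limit $(1+\theta x)_+$ are nondecreasing in $\theta$ when $x\ge 0$. The two viewpoints encode the same underlying nested-integral structure, but your recursion-and-induction packaging is somewhat more elementary and self-contained, whereas the paper's polytope picture is slicker once set up and connects naturally to its Section~\ref{SoliTutti}. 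You also make explicit the passage from ``nonincreasing'' to ``strictly decreasing'' by invoking that $\wJ_n$ is a nonconstant polynomial for $n\ge3$, a point the paper leaves implicit. The only blemish is immaterial: the paper's text says ``increasing on $(-\infty,1]$'', which is evidently a typo for $(-\infty,-1]$, and your reading handles this correctly.
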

  
\proof

By \eqref{eq:main_theorem_2}, it is enough to prove that $\theta\mapsto \tp_{n}^{\,U}(\theta)$ is increasing on $(-\infty,1]$ for all $n\ge 2$. Setting $r = -1/\theta\in (0,1]$, we see by the definition that $\tp_{n}^{\,U}(\theta)$ is, for every $n\ge 2$, the volume of the polytope in $[0,1]^{n}$ defined by the inequalities
\begin{equation*}
   x_{i}\, +\, \theta x_{i-1}\, +\, \cdots\, +\, \theta^{i-1} x_{1}\; \in\; [0,r]
\end{equation*}
for $i=1,\ldots, n-1$, and $x_{n}\ge x_{n-1}\, +\, \theta x_{n-2} \, + \, \cdots\, +\, \theta^{n-1} x_{1}$. Changing the variable $y_{1} = x_{1}$ and $y_{i}= x_{i}\, +\, \cdots\, +\, \theta^{i-1} x_{1}$ for $i = 2,\ldots,n$, we see that this volume equals that of the intersection of $[0,r]^{n-1}\times[0,1]$ and the polytope in $[0,1]^{n}$ defined by the inequalities
\begin{equation*}
   y_{i+1}\; \le\; 1\, +\, \theta x_{i-1}
\end{equation*}
for $i =1,\ldots, n-1$, which implies that $\theta\mapsto \tp_{n}^{\,U}(\theta)$ increases on $(-\infty,1]$.   
\qed

\begin{Rem}\label{Inc3}\rm
When combined with Remark \ref{Inc1}, the above proof shows that the mapping $\theta\mapsto p_{n}^{U}(\theta)$ is nondecreasing on the whole real line for all $n\ge 0$. Since  this is actually true on $\R^{+}$ for arbitrary innovation law, see again Remark \ref{Inc1}, it would be interesting to know if there are innovation laws where monotonicity fails to hold on the negative halfline.
\end{Rem}

\subsection{Asymptotic behaviour}

The following result gives exact exponential behavior of the $p^{\,U}_{n}(\theta)$ for $\theta<-1$, with a rate expressed in terms of the first root $z_{1/\theta}$ of the generalized expo\-nential function $E(1/\theta,z)$, similar to Proposition \ref{Asymp1}.
 
\begin{Prop}
\label{Asymp2}
For every $\theta <-1$, there exists $c_{\theta} \in (0,\infty) $ such that  
\begin{equation*}
    p^{\,U}_{n}(\theta)\; \sim \; \frac{1}{c_{\theta}^{}\,\mu_{\theta}^{n}}\qquad\text{as $n\to\infty$,}
\end{equation*}
where $\mu_{\theta} = 2(1-\theta) z_{1/\theta} > -2\theta$.
\end{Prop}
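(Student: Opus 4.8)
The target is the exact exponential asymptotics of $p^{\,U}_{n}(\theta)$ for $\theta<-1$, and the natural route is to exploit Corollary \ref{cor:Thm 2}, which gives $p^{\,U}_{n}(\theta) = \wJ_{n+1}(1/\theta)/(2^{n}n!)$, together with the generating-function identity \eqref{eq:modified_MR}. Writing $\vartheta = 1/\theta \in (-1,0)$, the denominator in \eqref{eq:modified_MR} is $\sum_{n\ge 0}(-1)^{n}J_{n+1}(\vartheta)z^{n}/n!$, which by \eqref{eq:to_prove-MR} equals $E(\vartheta,-z)/E(\vartheta, z)$ evaluated along the right scaling; more precisely, differentiating \eqref{id:MR} one has $\sum_{n\ge 0}(\vartheta-1)^{n}J_{n+1}(\vartheta)w^{n}/n! \fps E(\vartheta,\vartheta w)/E(\vartheta,w)$, so after the substitution $w=-z/(1-\vartheta)$ the exponential generating function of $\wJ_{n+1}(\vartheta)$ becomes
\begin{equation*}
\sum_{n\ge 0}\wJ_{n+1}(\vartheta)\,\frac{z^{n}}{n!}\; \fps\; \frac{E\!\left(\vartheta,\frac{-z}{1-\vartheta}\right)}{E\!\left(\vartheta,\frac{\vartheta z}{1-\vartheta}\right)}\cdot
\end{equation*}
The plan is to turn this formal identity into a genuine analytic one: since $|\vartheta|<1$, the function $E(\vartheta,\cdot)$ is entire of order zero (as already used in the proof of Proposition \ref{Asymp1}), so both numerator and denominator are entire, and the right-hand side is a meromorphic function of $z$ whose poles are exactly the zeros of $E(\vartheta, \vartheta z/(1-\vartheta))$, i.e.\ the points $z = (1-\vartheta)a_{k}(\vartheta)/\vartheta$ where $a_{k}(\vartheta)$ are the roots of $E(\vartheta,-\cdot)$ ordered by modulus.

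**Key steps.** First I would record the analytic identity above and note that, because the power series for $\wJ_{n+1}(\vartheta)z^{n}/n!$ has a positive radius of convergence equal to the modulus of the nearest pole, the asymptotics of $\wJ_{n+1}(\vartheta)/n!$ are governed by a singularity analysis. Second, I would invoke the facts about $a_{1}(\vartheta)$ established inside the proof of Proposition \ref{Asymp1}: for $\vartheta\in(-1,0)$ the root $a_{1}(\vartheta)$ is simple, positive, and strictly smaller in modulus than all other roots. Since $\vartheta<0$, the corresponding pole of the generating function sits at $z_{\vartheta}^{*} := (1-\vartheta)a_{1}(\vartheta)/\vartheta < 0$, and one must check it is a genuine simple pole, i.e.\ that the numerator $E(\vartheta,-z_{\vartheta}^{*}/(1-\vartheta)) = E(\vartheta, a_{1}(\vartheta)/\vartheta)$ does not vanish; this holds because $a_{1}(\vartheta)/\vartheta < 0 < a_{1}(\vartheta) \le |a_{k}(\vartheta)|$ would force $|a_{1}(\vartheta)/\vartheta| < a_{1}(\vartheta)$ only if $|\vartheta|>1$, which fails — so one needs the complementary estimate, namely that all zeros of $E(\vartheta,-\cdot)$ have modulus $\ge a_{1}(\vartheta)$ while $|a_{1}(\vartheta)/\vartheta| = a_{1}(\vartheta)/|\vartheta| > a_{1}(\vartheta)$, hence $a_{1}(\vartheta)/\vartheta$ lies strictly outside the root set and $E(\vartheta,a_{1}(\vartheta)/\vartheta)\ne 0$. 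Third, a standard transfer theorem (extract the simple pole, the remaining part being analytic in a strictly larger disk by the modulus gap for the $a_{k}$) yields
\begin{equation*}
\frac{\wJ_{n+1}(\vartheta)}{n!}\; \sim\; \frac{R_{\vartheta}}{(z_{\vartheta}^{*})^{\,n+1}}\qquad\text{as }n\to\infty,
\end{equation*}
with $R_{\vartheta}\ne 0$ the residue; dividing by $2^{n}$ and using $z_{1/\theta} = a_{1}(1/\theta)$ gives $p^{\,U}_{n}(\theta)\sim 1/(c_{\theta}\mu_{\theta}^{n})$ with $\mu_{\theta} = 2 z_{\vartheta}^{*} = 2(1-\vartheta)a_{1}(\vartheta)/\vartheta = 2(1-1/\theta)z_{1/\theta}/(1/\theta) = 2(1-\theta)z_{1/\theta}$ after simplification, and $c_{\theta} = 1/(2 R_{\vartheta})$ up to sign. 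Fourth, positivity of $c_{\theta}$ and the bound $\mu_{\theta} > -2\theta$ (equivalently $(1-\theta)z_{1/\theta} > -\theta$, i.e.\ $z_{1/\theta} > -\theta/(1-\theta) = 1/(1+|\theta|)$, which should follow from $E(\vartheta,-x)>0$ on a suitable interval, e.g.\ since $E(\vartheta,0)=1$ and the first negative root of $E(\vartheta,\cdot)$ exceeds $\ldots$): these come for free from $p^{\,U}_{n}(\theta)>0$ together with $p^{\,U}_{n}(\theta)\to 0$ (forcing $\mu_{\theta}>1$ and $c_{\theta}>0$), exactly as in the last lines of the proof of Proposition \ref{Asymp1}; the sharper inequality $\mu_\theta > -2\theta$ can be read off from the monotonicity of $\theta\mapsto(1-\theta)z_\theta$ recorded in Remark \ref{Conv}(b) applied at $\vartheta = 1/\theta \in (-1,0)$, where that product exceeds its value $1$ at $\vartheta = 0$, giving $(1-\vartheta)a_1(\vartheta) > 1$, hence $(1-\theta)z_{1/\theta} = (1/(-\vartheta))(1-\vartheta)a_1(\vartheta)\cdot(\text{sign bookkeeping}) > -\theta$.

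**The main obstacle.** The analytic, rather than formal, justification is where care is needed: one has to argue that the formal power series identity \eqref{eq:modified_MR} upgrades to an equality of analytic functions on a neighbourhood of the origin. The clean way is to observe that $J(\vartheta,z) := \sum_{n\ge 0}J_{n+1}(\vartheta)z^{n}/n!$ has a positive radius of convergence for $\vartheta\in(-1,1)$ — this is precisely the content of the series representation \eqref{SerRep}, which converges for $|z| < |a_{1}(\vartheta)|$ — so the inverse series $\wJ(\vartheta,\cdot)$ is analytic near $0$ with radius of convergence equal to the distance to the nearest zero of $z\mapsto E(\vartheta,\vartheta z/(1-\vartheta))$, and the factorization of $E(\vartheta,\cdot)$ via the Hadamard product (already invoked for Proposition \ref{Asymp1}) identifies that zero set explicitly. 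A secondary, more delicate point is confirming the residue $R_{\vartheta}$ is nonzero and determining its sign; but nonvanishing is automatic once $a_{1}(\vartheta)$ is a simple pole with $E(\vartheta, a_1(\vartheta)/\vartheta)\ne 0$, and the sign is then pinned down by $p^{\,U}_{n}(\theta)>0$. Thus the real work is bookkeeping with the substitution $\vartheta = 1/\theta$ and the scaling factor $1-\vartheta$, plus transcribing the root-modulus facts from the proof of Proposition \ref{Asymp1}; no new hard analysis is required.
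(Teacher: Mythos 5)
Your approach is the same as the paper's: express the exponential generating function of $\wJ_{n+1}(\vartheta)$, $\vartheta=1/\theta\in(-1,0)$, as a ratio of two copies of the deformed exponential via \eqref{AltExp} (equivalently \eqref{eq:modified_MR} and \eqref{eq:to_prove-MR}), upgrade the formal identity to an analytic one using the Hadamard factorization, and extract coefficient asymptotics from a simple-pole analysis together with the modulus gap $|a_2(\vartheta)|>a_1(\vartheta)$ from the proof of Proposition~\ref{Asymp1}. Before turning to the main gap, note two sign errors: substituting $w=z/(1-\vartheta)$ (not $-z/(1-\vartheta)$) in \eqref{eq:to_prove-MR} and inverting gives
\begin{equation*}
\sum_{n\ge 0}\wJ_{n+1}(\vartheta)\,\frac{z^{n}}{n!}\;=\;\frac{E\!\left(\vartheta,\frac{z}{1-\vartheta}\right)}{E\!\left(\vartheta,\frac{\vartheta z}{1-\vartheta}\right)},
\end{equation*}
matching \eqref{AltExp}, and the relevant pole is at the \emph{positive} abscissa $-(1-\vartheta)a_1(\vartheta)/\vartheta$, not at $(1-\vartheta)a_1(\vartheta)/\vartheta<0$.

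The genuine gap is in your argument that the numerator does not vanish at the pole. You reason that $|a_1(\vartheta)/\vartheta|=a_1(\vartheta)/|\vartheta|>a_1(\vartheta)$ while all roots of $E(\vartheta,-\cdot)$ have modulus $\ge a_1(\vartheta)$, and conclude the point ``lies strictly outside the root set.'' This does not follow: having modulus strictly larger than $a_1(\vartheta)$ is compatible with being equal to $a_k(\vartheta)$ for some $k\ge 2$, since almost all of the root set lies strictly outside the disk of radius $a_1(\vartheta)$. What is needed — and what the paper supplies — is a Rolle-type argument: if the numerator and denominator vanished simultaneously, i.e.\ if $E(r,-a_1(r))=0=E(r,b_1(r))$ with $r=1/\theta$ and $b_1(r)=-a_1(r)/r>a_1(r)$, then Rolle's theorem would give a zero of $\partial_z E(r,\cdot)=E(r,r\cdot)$ at some $z_1\in(-a_1(r),b_1(r))$, hence $|z_1|<b_1(r)$, contradicting the minimality of $b_1(r)$ among the moduli of the roots of $z\mapsto E(r,rz)$. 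Without some such argument, the candidate pole could in principle be cancelled by a zero of the numerator and the claimed exponential rate would be wrong. The rest of your outline (simplicity from $a_1(\vartheta)$ being a simple root, meromorphy on the larger disk $\cB(0,b_2(r))$, and the bound $\mu_\theta>-2\theta$ from the monotonicity of $\vartheta\mapsto(1-\vartheta)z_\vartheta$ in Remark~\ref{Conv}(b)) is sound and tracks the paper's execution once these two points are repaired.
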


\begin{proof}
Putting $r= 1/\theta\in (-1,0)$ and adopting the notation of Proposition~\ref{Asymp1}, Eq.\,\eqref{AltExp} provides us with
\begin{equation*}
   G(r,z)\,:=\,\sum_{n\ge 0} (1-r)^{n}\,\wJ_{n+1}(r)\, \frac{z^{n}}{n!}\; =\; \frac{E(r,z)}{E(r, rz)}\; =\; \frac{{\displaystyle \prod_{k\ge 1} \lpa 1 + \frac{z}{a_{k}(r)}\rpa}}{{\displaystyle \prod_{k\ge 1} \lpa 1 + \frac{r z}{a_{k}(r)}\rpa}} 
\end{equation*}
for any $z\in \cB(0, b_{1}(r))$, where $b_{1}(r) = -\theta a_{1}(r) > 0$ equals the smallest root in modulus of $z\mapsto E(r,r z)$. We claim that $E(r, b_{1}(r))\neq 0$, for otherwise we would infer $E(r, -a_{1}(r)) = E(r,b_{1}(r)=0$ and hence, by Rolle's theorem, $E'(r,z_{1})=E(r,r z_{1})=0$ for some $z_{1}\in (-a_{1}(r), b_{1}(r))$, which contradicts the minimality of $b_{1}(r)$. Recalling from the proof of Proposition \ref{Asymp1} that $b_2(r) = -\theta\vert a_2(r)\vert > b_{1}(r)$, we see that $z\mapsto G(r,z)$ is meromorphic on $\cB(0,b_2(r))$ with a single pole of order one at $b_{1}(r)$. It follows upon setting
\begin{gather*}
c_{\theta}\; =\; \frac{{\displaystyle \prod_{k\ge 2} \lpa 1 + \frac{\theta b_{1}(r)}{a_{k}(r)}\rpa}}{{\displaystyle \prod_{k\ge 1} \lpa 1 + \frac{b_{1}(r)}{b_{k}(r)}\rpa}}\; \in\; (0,\infty),
\shortintertext{that the function}
z\;\mapsto\; G(r,z)\; -\; \frac{b_{1}(r)}{c_{\theta}(b_{1}(r) - z)}
\end{gather*}
is holomorphic on $\cB(0,b_2(r))$ and, by evaluating its coefficients, we finally obtain
\begin{equation*}
   p_{n}^{U}(\theta)\, \mu_{\theta}^{n}\; =\; (1-r)^{n}\,\wJ_{n+1}(r)\, \frac{(b_{1}(r))^{n}}{n!}\;\to\; \frac{1}{c_{\theta}}\qquad\text{as $n\to\infty.$}
\end{equation*}
Finally, the bound $\mu_{\theta} = -\theta \lambda_r > -2\theta$ follows by a look at the dual case $\theta\in (-1,0]$ in Proposition \ref{Asymp1}, see Remark \ref{Conv}(b). \qed
\end{proof}

\begin{Rem}\rm
As expected, one has $\mu_{\theta} = -\theta\lambda_{1/\theta} > \lambda_{1/\theta}$ and $\mu_{\theta} \to \infty$ as $\theta\to -\infty$. And since $\theta\mapsto p_{n}^{U}(\theta)$ is nondecreasing on $(-\infty,-1)$, the same must hold for $\theta\mapsto \mu_{\theta}^{-1}$ on this interval.
\end{Rem}

\section{The case $\theta>0$}

\subsection{Proof of Theorem~\ref{thm:main_3}}
\label{sec:case_>1}

Again, the argument relies on a linear recurrence relation obtained from the general formula \eqref{MecGen}. The first step is to show that
\begin{equation}\label{eq:main_rec, theta<=-1}
p_{n}(\theta)\; =\; \sum_{k=1}^{n}p_{n-k}(\theta)\lpa p_{k-1}(1/\theta)\, -\, p_{k}(1/\theta)\rpa
\end{equation}
for all $\theta > 0 $ and $n\ge 1$. Setting $r =1/\theta > 0$, we embark on the fact that
\begin{equation*}
    p_{n}(\theta)\; =\; \Prob\Bigg[\sum_{j=1}^{k} r^{j-1}X_{j}\ge 0, \; k=1,\ldots, n\Bigg].
\end{equation*}
For $n\ge 1$ and $u\in\R$, we define
\begin{equation*}
   A_{n}(u)\; =\; \Bigg\{\sum_{j=1}^{k} r^{j-1}X_{j}\ge 0,\; k=1,\ldots, n-1;\ \sum_{j=1}^{n} r^{j-1}X_{j}\ge u\Bigg\}
\end{equation*}
and $A_{n}=A_{n}(0)$. Since 
\begin{equation}\label{BigStrawHat}
A_{n}(u) \; =\; A_{n-1} \; \cap\; \Bigg\{\sum_{j=1}^{n-1} r^{j-1}X_{j}\ge u \, -\, r^{n-1} X_{n}\Bigg\}
\end{equation}
and the innovation law $F$ is symmetric and continuous, the decomposition
\begin{align*}
p_{n}(\theta)\;=\; \Prob[A_{n}]\;&=\; \frac{p_{n-1}(\theta)}{2}\,+\,\int_{-\infty}^{0}\Prob\left[A_{n-1}(-r^{n-1} u_{1})\right] dF(u_{1})\\
\;& =\;  p_{n-1}(\theta)\,q_{1}(r)\;+\;\int^{\infty}_{0}\Prob\left[A_{n-1}(r^{n-1} u_{1})\right] dF(u_{1})
\end{align*}
holds, where $q_{k}(r) = p_{k-1}(r) - p_{k}(r)$ for all $k\ge 1$. The next step is to evaluate the integrand, say $I_{n-1}$, in the previous line:
\begin{align*}
I_{n-1}\; & =\; \int_{ru_{1}}^{\infty} p_{n-2}(\theta)\, dF(u_2)\\
& \qquad\qquad\quad +\;\int^{ru_{1}}_{-\infty}\Prob\left[A_{n-2}(r^{n-1} u_{1} - r^{n-2} u_2)\right] dF(u_2)\\
& =\;\; p_{n-2}(\theta)\lpa 1 \, -\, \int^{ru_{1}}_{-\infty} dF(u_2)\rpa\\
& \qquad\qquad\quad +\;\int_{-ru_{1}}^{\infty}\Prob\left[A_{n-2}(r^{n-1} u_{1} + r^{n-2} u_2)\right] dF(u_2)\\
& =\;\; p_{n-2}(\theta)\lpa 1 \, -\, \int_{-ru_{1}}^{\infty} dF(u_2)\rpa\\
& \qquad\qquad\quad +\;\int_{-ru_{1}}^{\infty}\Prob\left[A_{n-2}(r^{n-1} u_{1} + r^{n-2} u_2)\right] dF(u_2).
\end{align*}
Putting things together, we arrive at
\begin{align*}
p_{n}(\theta) \; & =\; p_{n-1}(\theta)\,q_{1}(r)\; +\; p_{n-2}(\theta)\lpa p_{1}(r) \, -\, \int_{0}^{\infty}\!\!\! \int_{-ru_{1}}^{\infty} dF(u_{1}) dF(u_2)\rpa\\
& \qquad\qquad +\;\int_{0}^{\infty}\!\! \int_{-ru_{1}}^{\infty}\Prob\left[A_{n-2}(r^{n-1} u_{1} + r^{n-2} u_2)\right] dF(u_{1}) dF(u_2)\\
\; & =\; p_{n-1}(\theta)\,q_{1}(r)\; +\; p_{n-2}(\theta)\, q_2(r)\\
& \qquad\qquad +\;\int_{0}^{\infty}\!\! \int_{-ru_{1}}^{\infty}\Prob\left[A_{n-2}(r^{n-1} u_{1} + r^{n-2} u_2)\right] dF(u_{1}) dF(u_2),
\end{align*}
and calls for a computation of $I_{n-1}:=\Prob[A_{n-2}(r^{n-1} u_{1} + r^{n-2} u_2)]$ as the next step. Introducing the notation $s_{k}= r^{n-k}u_{k}+\ldots+r^{n-1}u_{1}$ for $k=1,\ldots n$, we find
\begin{multline*}
I_{n-2}\ =\; \int_{ru_2 + r^{2}u_{1}}^{\infty} \!\!\!\!\! p_{n-3}(\theta)\, dF(u_3)\ +\; \int^{\infty}_{-(ru_2 + r^{2}u_{1})}\!\!\! \Prob [A_{n-3}(s_3)]\ dF(u_3)\\
=\; p_{n-3}(\theta)\lpa 1 \, -\, \int_{-(ru_2 + r^{2}u_{1})}^{\infty} dF(u_3)\rpa+\int^{\infty}_{-(ru_2 + r^{2}u_{1})}\!\!\!\Prob [A_{n-3}(s_3)]\, dF(u_3)
\end{multline*}
and thereupon
\begin{multline*}
p_{n}(\theta) \;  =\; \sum_{k=1}^{3}p_{n-k}(\theta)\, q_{k}(r) \\
 \qquad +\;\int_{0}^{\infty}\!\!\! \int_{-ru_{1}}^{\infty}\! \int^{\infty}_{-(ru_2 + r^{2}u_{1})}\!\!\!\!\!\Prob [A_{n-3}(s_3)]\, dF(u_{1}) dF(u_2) dF(u_3).
\end{multline*}
Therefore, by continuing in the now obvious manner and repeatedly using \eqref{BigStrawHat}, we finally arrive at 
\begin{equation*}
    p_{n}(\theta) \; =\; \sum_{k=1}^{n}p_{n-k}(\theta)\, q_{k}(r) 
\end{equation*}
as required for \eqref{eq:main_rec, theta<=-1}. But this identity in combination with $p_{0}(1/\theta) = 1$ implies 
\begin{equation*}
   \sum_{k=0}^{n}p_{n-k}(\theta)\, p_{k}(1/\theta) \; = \; \sum_{k=1}^{n}p_{n-k}(\theta)\, p_{k-1}(1/\theta) \; = \; \sum_{k=0}^{n-1}p_{n-1-k}(\theta)\, p_{k}(1/\theta)
\end{equation*}
for each $n\ge 1$ and $\theta > 0$, that is, the quantity on the very left does not depend on $n$ and must therefore equal $1$ as claimed.\qed

\begin{Rem}\label{MRGL}\rm
(a) In the case when $X_{1}$ has a biexponential law and $\theta \in (0,1)$, the generating function of the persistence probabilities can be computed from the results of \cite{Larr04} in terms of $q$-series. More precisely, it follows from Formula (31) in \cite{Larr04} that 
\begin{equation}
\label{tet}
\sum_{n\ge 0}\, p_{n}(\theta)\, z^{n}\; = \;\frac{(\theta z; \theta^{2})_\infty + (\theta^{2} z; \theta^{2})_\infty}{ (z; \theta^{2})_\infty + (\theta z; \theta^{2})_\infty}
\end{equation} 
with the standard $q$-notation $(z;q)_\infty = \prod_{n\ge 0} (1- zq^{n})$ for all $z,q\in (0,1)$. Thanks to Theorem~\ref{thm:main_3}, we can now also compute this generating function if $\theta > 1$, namely
\begin{equation}
\label{thet}
\sum_{n\ge 0}\, p_{n}(\theta)\, z^{n}\; = \;\frac{(z; \theta^{-2})_\infty + (\theta^{-1} z; \theta^{-2})_\infty}{(1-z)\, \lpa(\theta^{-1} z; \theta^{-2})_\infty + (\theta^{-2} z; \theta^{-2})_\infty\rpa}.
\end{equation}
These formulae for $p_{n}(\theta)$ if $\theta > 0$ are significantly more complicated than for $\theta < 0$, see Remark \ref{Expo}. On the other hand, they exhibit some interesting similarities with those in the case of uniform innovations, due to the $q$-binomial theorem. Skipping details, formula \eqref{tet} can indeed be rewritten
\begin{equation*}
   \sum_{n\ge 0}\, p_{n}(\theta)\, z^{n}\; = \;\frac{\displaystyle\sum_{n\ge 0} \frac{q^{n(n+1)/2}}{(q-1)^{n}}\, \frac{z^{n} (1+ \theta^{-n})}{[n]_q!}}{\displaystyle\sum_{n\ge 0}\frac{q^{n(n-1)/2}}{(q-1)^{n}}\, \frac{z^{n}(1+\theta^{n})}{[n]_q!}}
\end{equation*}
with $q = \theta^{2}\in (0,1)$, and resembles the formula
\begin{equation*}
   \sum_{n\ge 0}\, p_{n}^{U}(\theta)\, z^{n}\; = \;\frac{\displaystyle\sum_{n\ge 0} \frac{\theta^{n(n+1)/2}}{(\theta-1)^{n}}\, \frac{z^{n}}{2^{n}\, n!}}{\displaystyle\sum_{n\ge 0}\frac{\theta^{n(n-1)/2}}{(\theta-1)^{n}}\, \frac{z^{n}}{2^{n}\, n!}}, 
\end{equation*}
which in turn follows from the fact that expressions in \eqref{eq:to_prove-MR} and \eqref{eq:to_prove-0} are identical for $\theta\in [-1,\frac{1}{2}]$. This raises the question whether other symmetric innovation laws exist, intermediate between uniform and bi-exponential, that allow one to give the explicit persistence probabilities in Theorems \ref{thm:main_2} and \ref{thm:main_3}.

\vspace{.2cm}
(b) Recall from Remark \ref{MRGenLin}(a) that in the case when the innovation law is uniform on $[-1,1]$, Formula \eqref{eq:main_theorem_3} for the persistence probabilities $p_{n}^{U}(\theta)$ is valid for each $n\ge 1$ and $\theta\ge 1/\theta_{n}$, where $\theta_{n}$ denotes the positive solution to $\theta +\cdots +\theta^{n-1} = 1$. In the case $\theta\in(1,1/\theta_{n})$, the behavior of $p_{n}^{U}(\theta)$ exhibits a more exotic character (truncated Laurent series in $\theta$). For example, one has 
\begin{equation*}
   \tp_3^{\,U}(\theta) \; =\; - \frac{1}{\theta}\, +\, \frac{19}{6}\, +\, \frac{\theta^{2}}{2}\, -\, \frac{\theta^3}{6}
\end{equation*}
for each $\theta \ge 1$ such that $\theta^{-1} +\theta^{-2} \ge 1$. Still, the mapping $\theta\to\tp_{n}(\theta)$ seems to maintain a certain degree of smoothness on $(1,\infty)$ for all $n$, see Figure~\ref{fig:plot_2345}.
\end{Rem}

\subsection{Proof of Corollary \ref{cor:Thm 3} and properties of the $\hJ_{n}(\theta)$}
\label{SphJ}

Regarding a proof of Corollary \ref{cor:Thm 3}, we first mention that Eq.\,\eqref{eq:main_theorem_3} for the persistence probabilities $p_{n}^{\,U}(\theta)$ follows directly from Theorems \ref{thm:main_1} and \ref{thm:main_3} when defining the $\hJ_{n}(\theta)$ by \eqref{eq:modified_MR_bis}. Therefore, it remains to verify the asserted properties of the latter functions, which is done by Proposition \ref{Mischba} below, followed by a discussion of some further notable properties.

\begin{Prop}\label{Mischba}
For each $n\ge 1$, $\hJ_{n+1}(\theta)$ is a polynomial in $\Z[X]$ of degree $n(n-1)/2$ that has coefficient $2^{n-1} n!$ of order 0 while all other coefficients are negative.
\end{Prop}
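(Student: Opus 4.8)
The plan is to run an induction on $n$ using the defining relation \eqref{eq:modified_MR_bis}, which, after multiplying out and comparing coefficients of $z^n/n!$, reads
\begin{equation*}
\sum_{k=0}^{n} \binom{n}{k}\, \hJ_{k+1}(\theta)\, J_{n+1-k}(\theta)\; =\; 2^{n}\, n!
\end{equation*}
for all $n\ge 0$, since $\frac{1}{1-2z}\fps\sum_{n\ge 0} 2^n z^n$ and $\bigl(\sum J_{n+1}(\theta)z^n/n!\bigr)\bigl(\sum \hJ_{n+1}(\theta)z^n/n!\bigr)$ has $n$-th coefficient $\frac1{n!}\sum_k\binom nk\hJ_{k+1}J_{n+1-k}$. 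This gives the explicit recursion
\begin{equation*}
\hJ_{n+1}(\theta)\; =\; 2^{n}\, n!\; -\; \sum_{k=0}^{n-1} \binom{n}{k}\, \hJ_{k+1}(\theta)\, J_{n+1-k}(\theta),
\end{equation*}
with $\hJ_1(\theta)=1$. From this it is immediate by induction that each $\hJ_{n+1}(\theta)\in\Z[X]$: the $J_m(\theta)$ have integer coefficients and the binomial factors are integers. The degree claim also follows inductively from the recursion, since $\deg J_{n+1-k}=(n-k)(n-k-1)/2$ and, assuming $\deg\hJ_{k+1}=k(k-1)/2$ for $k\le n-1$, the product of highest degrees in the sum is maximized at $k=n-1$, giving $\deg(\hJ_{n}\cdot J_2)=(n-1)(n-2)/2+0=(n-1)(n-2)/2 < n(n-1)/2$, so the top-degree term $n(n-1)/2$ must come from $\hJ_{n+1}$ itself and cannot cancel — more precisely one checks the leading coefficient is inherited along the chain $k=0$ term $\hJ_1 J_{n+1}$, whose top term is $\theta^{n(n-1)/2}$, so $\hJ_{n+1}$ has degree exactly $n(n-1)/2$ with leading coefficient $-1$ for $n\ge 2$ (and $\hJ_2=1$).

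For the sign statement — constant term $2^{n-1}n!$ and all other coefficients negative — the cleanest route is to extract the constant term of the recursion and then handle the remaining coefficients separately. Setting $\theta=0$: since $J_m(0)=(m-1)!$ by the remark after Theorem~\ref{thm:main_1}, the recursion gives $\hJ_{n+1}(0)=2^n n!-\sum_{k=0}^{n-1}\binom nk \hJ_{k+1}(0)\,(n-k)!$, which together with $\hJ_1(0)=1$ is solved by $\hJ_{n+1}(0)=2^{n-1}n!$ for $n\ge 1$ (verify: this is equivalent to the generating-function identity $\bigl(\sum 2^{n-1}n!\,z^n/n!\bigr)\cdot$ prefactor $=\frac1{1-2z}(\sum (n-1)!\, z^n/n!\, +\text{adjustment})$; concretely $\sum_{n\ge 0}\hJ_{n+1}(0)z^n = 1+\sum_{n\ge 1}2^{n-1}z^n = \frac{1-z}{1-2z}$ and one confirms $\frac{1-z}{1-2z}\cdot\frac1{1-(-\log(1-z))'\text{-type series}}$... — in any case this is a one-line power-series check using $J(0,z)=\sum(n-1)!z^n/n! = 1+\sum_{n\ge1}z^n/n\cdot n! $, better done directly from $\hJ(0,z)J(0,z)=\frac1{1-2z}$ with $J(0,z)=\frac{-\log(1-z)}{z}+1$... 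I will present it as: $\hJ(0,z)=\frac{1}{(1-2z)J(0,z)}$ and read off coefficients).

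The main obstacle, and the part needing the real argument, is showing that every \emph{non-constant} coefficient of $\hJ_{n+1}(\theta)$ is strictly negative. The natural idea is to write $\hJ_{n+1}(\theta) = 2^{n-1}n! - \bigl(2^{n-1}n! - \hJ_{n+1}(\theta) + \hJ_{n+1}(0) - 2^{n-1}n!\bigr)$, i.e. to prove that $2^{n-1}n! - \hJ_{n+1}(\theta)$ has all coefficients $\ge 0$ (nonnegative), vanishing only in degree $0$. Plugging into the recursion and using $\hJ_{n+1}(0)=2^{n-1}n!$, one gets $2^{n-1}n!-\hJ_{n+1}(\theta) = \sum_{k=0}^{n-1}\binom nk\hJ_{k+1}(\theta)J_{n+1-k}(\theta) - 2^{n-1}n!$; substituting the inductive decomposition $\hJ_{k+1}(\theta)=2^{k-1}k! - (\text{nonneg. poly } R_k(\theta))$ for $k\ge 1$ (with $R_k(0)=0$) and $\hJ_1=1$, one must show the resulting expression, namely $\sum_k\binom nk(2^{k-1}k!-R_k)J_{n+1-k} - 2^{n-1}n!$ with $R_0:=0$, $R_{k}$ replaced appropriately for $k=0$, has nonnegative coefficients. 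The term $-R_k J_{n+1-k}$ contributes \emph{negatively}, which is the wrong sign, so a naive induction on "nonnegativity of $2^{n-1}n!-\hJ_{n+1}$" does not close directly; instead I expect one needs a sharper simultaneous induction, e.g. controlling $\hJ_{n+1}(\theta)$ against a comparison like $\hJ_{n+1}(\theta)\le \hJ_{n+1}(0)=2^{n-1}n!$ as a function of $\theta\ge 0$ together with monotonicity, perhaps via the probabilistic interpretation: by Theorem~\ref{thm:main_3} and Theorem~\ref{thm:main_1}, for $\theta\ge 2$ we have $2^n n!\,p_n^U(\theta)=\hJ_{n+1}(1/\theta)$, and $\theta\mapsto p_n^U(\theta)$ being nondecreasing on $\R^+$ (Remark~\ref{Inc1}) forces $1/\theta\mapsto \hJ_{n+1}(1/\theta)$ to be nonincreasing, i.e. $\hJ_{n+1}'(x)\ge 0$ is false — rather $\hJ_{n+1}$ is nonincreasing on $(0,\tfrac12]$; combined with $\hJ_{n+1}(0)=2^{n-1}n!$ and $\deg$-theory this pins the signs down. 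The cleanest self-contained finish is likely: prove by induction that for $n\ge 1$ the polynomial $P_n(\theta):=2^{n-1}n! - \hJ_{n+1}(\theta)$ has all coefficients nonnegative, by grouping the recursion as $P_n(\theta) = \sum_{k=1}^{n}\binom nk\, 2^{k-1}k!\, \bigl(J_{n+1-k}(\theta)- [n=k]\bigr)\ \text{(nonneg since }J\ge 0,\ \text{and }J_1=1)\ -\ \sum_{k=1}^{n-1}\binom nk P_k(\theta)J_{n+1-k}(\theta)$ and observing the two competing terms telescope against the known value $P_n(0)=0$ — I will carry this out carefully, and flag the telescoping/positivity bookkeeping as the one genuinely delicate point.
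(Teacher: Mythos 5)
Your recursion
\begin{equation*}
\hJ_{n+1}(\theta)\;=\;2^{n}n!\;-\;\sum_{k=0}^{n-1}\binom{n}{k}\hJ_{k+1}(\theta)\,J_{n+1-k}(\theta)
\end{equation*}
is correct, and the integrality, degree, and constant-term $\hJ_{n+1}(0)=2^{n-1}n!$ parts are fine (the constant term is cleanest via $J(0,z)=\sum_{n\ge 0}z^{n}=\tfrac{1}{1-z}$, whence $\hJ(0,z)=\tfrac{1-z}{1-2z}=1+\sum_{n\ge1}2^{n-1}z^{n}$). But the sign claim --- all nonconstant coefficients strictly negative --- is where your proposal has a genuine gap, and you correctly sense it yourself. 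The sum $\sum_{k}\binom{n}{k}\hJ_{k+1}J_{n+1-k}$ multiplies $J_{n+1-k}$ (all positive coefficients) against $\hJ_{k+1}$ (which, precisely by what one is trying to prove, has one large positive coefficient and many negative ones), so the product has no controlled sign coefficientwise, and neither your ``nonnegativity of $2^{n-1}n!-\hJ_{n+1}$'' induction nor the vaguely sketched telescoping closes: after splitting $\hJ_{k+1}=\hJ_{k+1}(0)-Q_{k}$ you are left with terms like $-\sum_{k}\binom{n}{k}\hJ_{k+1}(\theta)\bigl(J_{n+1-k}(\theta)-J_{n+1-k}(0)\bigr)$ whose sign again depends on the unknown signs of $\hJ_{k+1}$. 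No elementary manipulation of the $J$-recursion seems to fix this.

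The ingredient you are missing is the second family $\wJ_{n}$ and its sign/valuation data already established in Proposition~\ref{Misc}, namely that $(-\theta)^{-(n-1)}\wJ_{n+1}(\theta)$ has positive integer coefficients (equivalently $(-1)^{n-1}\wJ_{n+1}(\theta)$ has positive coefficients and valuation $n-1$). Comparing \eqref{eq:modified_MR} with \eqref{eq:modified_MR_bis} gives the much friendlier relation
\begin{equation*}
\frac{\hJ_{n+1}(\theta)}{2^{n}\,n!}\;=\;\sum_{k=0}^{n}\frac{(-1)^{k}\wJ_{k+1}(\theta)}{2^{k}\,k!},
\end{equation*}
equivalently $\hJ_{n+1}=2n\hJ_{n}+(-1)^{n}\wJ_{n+1}$. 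Since $\wJ_{1}=\wJ_{2}=1$ and for $k\ge 2$ each $(-1)^{k+1}\wJ_{k+1}(\theta)=-(-1)^{k-1}\wJ_{k+1}(\theta)$ is a polynomial with all coefficients nonpositive (in fact $(-1)^{k-1}\wJ_{k+1}$ has positive coefficients) and valuation $k-1\ge 1$, one reads off directly
\begin{equation*}
\hJ_{n+1}(\theta)\;=\;2^{n-1}n!\;-\;2^{n}n!\,\sum_{k=2}^{n}\frac{(-1)^{k+1}\wJ_{k+1}(\theta)}{2^{k}\,k!}
\end{equation*}
where the subtracted polynomial has only positive coefficients and no constant term. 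That is the paper's route, and it is what makes the sign statement a one-liner once Proposition~\ref{Misc} is in hand; without routing through $\wJ$, the sign control you need is not accessible from the $J$-recursion alone.
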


\begin{proof}
It follows from \eqref{eq:modified_MR} and \eqref{eq:modified_MR_bis} that the $\hJ_{n}(\theta)$ and $\wJ_{n}(\theta)$ are related through their exponential generating functions, namely
\begin{gather}
\sum_{n\ge 0} \hJ_{n+1}(\theta)\, \frac{z^{n}}{n!}\; =\; \frac{1}{1-2z}\lpa\sum_{n\ge 0} (-1)^{n} \wJ_{n+1}(\theta)\, \frac{z^{n}}{n!}\rpa,\nonumber
\shortintertext{which is equivalent to}
\frac{\hJ_{n+1}(\theta)}{2^{n}\,n!}\; =\; \sum_{k = 0}^{n}  \, \frac{(-1)^k \wJ_{k+1} (\theta)}{2^k\,k!}\label{hwJ}
\end{gather}
for all $n\ge 0$ and implies the recursive relation
\begin{equation}
\label{hwJ1}
\hJ_{n+1}(\theta)\; =\; 2n\,\hJ_{n}(\theta)\, +\, (-1)^{n}\wJ_{n+1}(\theta)
\end{equation}
for all $n\ge 1$, with initial condition $\hJ_{1}(\theta)=1$. As an immediate consequence of this relation and formally proved by induction, each $\hJ_{n+1}(\theta)$ is indeed a polynomial in $\Z[X]$ of the asserted degree. We further infer from \eqref{hwJ} that
\begin{equation*}
   \hJ_{n+1}(\theta)\; =\; 2^{n-1} n!\; -\; 2^{n} n!\,\sum_{k = 2}^{n}\,\frac{(-1)^{k+1} \wJ_{k+1} (\theta)}{2^k\,k!} 
\end{equation*}
for any $n\ge 2$, which together with $\hJ_{2}(\theta)=1$ concludes the proof because, as a consequence of Proposition \ref{Misc}, the polynomial
\begin{equation*}
   \sum_{k = 2}^{n}\frac{(-1)^{k+1} \wJ_{k+1}(\theta)}{2^k\,k!} 
\end{equation*}
has valuation $1$ and positive coefficients for any $n\ge 2$.\qed
\end{proof}

For $\theta \ge 2$, Formula \eqref{hwJ} implies the following curious invariance property in the expansion of the persistence probability $p_{n}^{U}(\theta)$. 

\begin{Prop}\label{MischMisch}
For any $\theta\ge 2$, $k\ge 0$ and $n\ge k+1$, the first $k$ terms in the polynomial expansion of $p_{n}^{U}(\theta)$ as a function of $1/\theta$ do not depend on $n.$
\end{Prop}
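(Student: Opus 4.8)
The plan is to extract the low-order coefficients of $p_{n}^{U}(\theta)$, written as a polynomial in $x=1/\theta$, directly from the identity \eqref{hwJ}, exploiting that \eqref{hwJ} exhibits $\hJ_{n+1}$ as a \emph{partial sum} of the $\wJ_{j+1}$ and that the valuation of $\wJ_{j+1}$ grows linearly in $j$ by Proposition \ref{Misc}. Hence only boundedly many summands of that partial sum can influence a given low-order coefficient, and all the influential ones are already present once $n$ is large enough; this is the mechanism behind the claimed $n$-independence.

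Concretely, I would first note that \eqref{hwJ} is an identity between two polynomials in $\theta$, so it survives the substitution $\theta\mapsto 1/\theta$. Combining the result with Eq.\,\eqref{eq:main_theorem_3} of Corollary \ref{cor:Thm 3}, which applies exactly because $\theta\ge 2$, gives for every such $\theta$ and every $n\ge 0$
\[
 p_{n}^{U}(\theta)\; =\; \frac{\hJ_{n+1}(1/\theta)}{2^{n}\,n!}\; =\; \sum_{j=0}^{n}\frac{(-1)^{j}}{2^{j}\,j!}\,\wJ_{j+1}(1/\theta).
\]
Next, writing $[x^{m}]$ for the operator extracting the coefficient of $x^{m}$ and recalling from Proposition \ref{Misc} that $\wJ_{1}=1$ while $\wJ_{j+1}(x)$ has valuation $j-1$ for $j\ge 1$, one has $[x^{m}]\wJ_{j+1}(x)=0$ whenever $j\ge m+2$. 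Applying $[x^{m}]$ to the displayed identity then yields
\[
 [x^{m}]\,p_{n}^{U}(\theta)\; =\; \sum_{j=0}^{\min(n,\,m+1)}\frac{(-1)^{j}}{2^{j}\,j!}\,[x^{m}]\,\wJ_{j+1}(x),
\]
whose right-hand side is independent of $n$ as soon as $n\ge m+1$. Specializing to $m=0,1,\dots,k$ shows that, for $n\ge k+1$, the coefficients of $x^{0},x^{1},\dots,x^{k}$ — a fortiori the first $k$ terms of the expansion of $p_{n}^{U}(\theta)$ in powers of $1/\theta$ — are the same for all admissible $n$, which is the assertion.

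I do not expect a genuine obstacle here: the argument is bookkeeping once \eqref{hwJ}, Corollary \ref{cor:Thm 3} and the valuation statement of Proposition \ref{Misc} are on hand. The two points worth stating carefully are that \eqref{hwJ}, being a polynomial identity, may legitimately be composed with $\theta\mapsto 1/\theta$, and that the valuation of $\wJ_{j+1}$ is \emph{exactly} $j-1$, so that increasing $j$ really does push the contribution of $\wJ_{j+1}$ beyond the range of coefficients under consideration.
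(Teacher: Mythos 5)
Your argument is correct and is essentially the paper's own proof: both start from the partial-sum expression $p_n^U(\theta)=\sum_{j=0}^{n}(-1)^j\wJ_{j+1}(1/\theta)/(2^j j!)$ obtained by combining \eqref{hwJ} with Corollary \ref{cor:Thm 3}, and both use the valuation statement of Proposition \ref{Misc} to see that summands with $j\ge k+2$ cannot affect the coefficients of $(1/\theta)^0,\dots,(1/\theta)^k$. The only difference is presentational — you apply a coefficient-extraction operator $[x^m]$, whereas the paper splits the sum at $j\le k+1$ and notes the remainder $P_{n,k}$ has valuation $k+1$ — but the mathematical content is identical.
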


\begin{proof}
It follows by Theorem \ref{thm:main_3} and \eqref{hwJ} that
\begin{align}
p^{\,U}_{n}(\theta)\; =\; \frac{\hJ_{n+1}(1/\theta)}{2^{n}\,n!}\ &=\ \sum_{j= 0}^{n}\,\frac{(-1)^{j} \wJ_{j+1}(1/\theta)}{2^{j} j!}\nonumber\\
&=\ \sum_{j=0}^{k+1}\,\frac{(-1)^{j}\wJ_{j+1} (1/\theta)}{2^{j} j!} \; +\; P_{n,k}(1/\theta)
\label{eq:constant terms}
\end{align}
for any $n\ge k+1$, where $P_{n,k}(1/\theta)$ is zero for $n=k+1$ and, by Proposition~\ref{Misc}, a polynomial of valuation $k+1$ for $n > k+1$. This shows that the $k$ first terms of the expansion of $p_{n}^{U}(\theta)$ in $1/\theta$ are those in the first sum appearing in \eqref{eq:constant terms} and thus do not depend on $n\ge k+1$ as claimed.\qed
\end{proof}

We will see in the next subsection that $p^{\,U}_{n}(\theta)\downarrow\ell(\theta) > 0$ for all $\theta >1$. If $\theta\ge 2$, Proposition \ref{MischMisch} contributes to this convergence result  an infinite series representation of the limit, namely
\begin{equation*}
   \ell(\theta)\; =\; \sum_{k\ge 0} a_{k}\,\theta^{-k}, 
\end{equation*}
where $a_{0}=\frac{1}{2}$ and $a_{k}$ for $k\ge 1$ equals the coefficient of $\theta^{k}$ in the polynomial sum
\begin{equation*}
   \sum_{j=0}^{k+1}\,\frac{(-1)^{j}\wJ_{j+1} (\theta)}{2^{j}\,j!} 
\end{equation*}
and is a negative rational. The first terms in the expansion of $\ell(\theta)$ are 
\begin{multline*}
\ell(\theta)\;=\; \frac12\,-\,\bigg(\frac{1}{8\,\theta}\,+\, \frac{1}{16\,\theta^{2}}\, +\, \frac{5}{96\,\theta^3}\, +\, \frac{1}{24\,\theta^4} \, +\, \frac{5}{128\,\theta^5}\\
+\, \frac{7}{192\,\theta^6}\, +\, \frac{9}{256\,\theta^7}\, +\, \frac{107}{3072\,\theta^8}\, +\, \frac{641}{18432\,\theta^9}\,+\, \cdots\bigg)
\end{multline*}
and exhibit some curious combinatorial behavior. For example, the sequence of ratios $\{a_{k+1}/a_{k},\, k\ge 1\}$ begins with   
\begin{equation*}
   \frac{1}{2},\ \frac{5}{6},\ \frac{4}{5},\ \frac{15}{16}, \ \frac{14}{15},\ \frac{27}{28}, \ \frac{107}{108},\ \frac{641}{642},\, \ldots
\end{equation*}
and thus seems to contain terms of the form $\frac{N}{N+1}$ only for some integers $N$. A thorougher investigation of the limit function $\ell(\theta)$ and its coefficients $a_{k}$ for $k\ge 1$ is left open for future research.

\vspace{.2cm}
Recalling that $T_{\theta}^{\,U}=\inf\{n:Y_{n}<0\}$ in the case of uniform innovations on $[-1,1]$,
the last result of this subsection provides a surprisingly simple formula for the first hitting probabilities
$\Prob[T_{\theta}^{\,U} = n]$ in the case $\theta\ge 2.$

\begin{Prop}
\label{Hit}
If $\theta\ge 2$, then
\begin{equation*}
    \Prob[T_{\theta}^{\,U} = n]\; =\; \frac{(-1)^{n-1} \wJ_{n+1}(1/\theta)}{2^{n} \, n!} 
\end{equation*}
holds for all $n\ge 1$.
\end{Prop}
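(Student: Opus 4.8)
The plan is to reduce the statement to the two results already at our disposal, namely the closed formula of Corollary~\ref{cor:Thm 3} and the recursion \eqref{hwJ1} linking $\hJ_{n+1}$ to $\wJ_{n+1}$. The first step is the elementary observation that, since $\{T_{\theta}^{\,U}=n\}=\{T_{\theta}^{\,U}>n-1\}\setminus\{T_{\theta}^{\,U}>n\}$ and the latter event is contained in the former, one has the telescoping identity
\begin{equation*}
   \Prob[T_{\theta}^{\,U}=n]\;=\;\Prob[T_{\theta}^{\,U}>n-1]-\Prob[T_{\theta}^{\,U}>n]\;=\;p_{n-1}^{U}(\theta)-p_{n}^{U}(\theta)
\end{equation*}
for all $n\ge 1$, valid for any $\theta$ (with the convention $p_{0}^{U}(\theta)=1$).

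Next I would specialize to $\theta\ge 2$ and invoke Corollary~\ref{cor:Thm 3}, which gives $p_{m}^{U}(\theta)=\hJ_{m+1}(1/\theta)/(2^{m}\,m!)$ for every $m\ge 0$; applying this to $m=n-1$ and $m=n$ yields
\begin{equation*}
   \Prob[T_{\theta}^{\,U}=n]\;=\;\frac{\hJ_{n}(1/\theta)}{2^{n-1}(n-1)!}-\frac{\hJ_{n+1}(1/\theta)}{2^{n}\,n!}\;=\;\frac{2n\,\hJ_{n}(1/\theta)-\hJ_{n+1}(1/\theta)}{2^{n}\,n!}\cdot
\end{equation*}
It then remains to recognize the numerator. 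By the recursion \eqref{hwJ1}, $\hJ_{n+1}(\theta)=2n\,\hJ_{n}(\theta)+(-1)^{n}\wJ_{n+1}(\theta)$, so that $2n\,\hJ_{n}(\theta)-\hJ_{n+1}(\theta)=-(-1)^{n}\wJ_{n+1}(\theta)=(-1)^{n-1}\wJ_{n+1}(\theta)$; substituting $\theta\mapsto 1/\theta$ and inserting this into the display above gives exactly the claimed formula. The case $n=1$ is covered automatically since $\hJ_{1}\equiv 1$ and $\wJ_{2}\equiv 1$, consistent with $\Prob[T_{\theta}^{\,U}=1]=1-\frac12=\frac12$.

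There is essentially no serious obstacle here: the argument is a direct combination of the telescoping of persistence probabilities with two identities established earlier in Section~\ref{SphJ}. The only points requiring a moment's care are that Corollary~\ref{cor:Thm 3} must be applied at both indices $n-1$ and $n$, which is legitimate for all $n\ge 1$ precisely because the hypothesis is $\theta\ge 2$ (the threshold beyond which \eqref{eq:main_theorem_3} holds for every index), and the bookkeeping of the sign $(-1)^{n}$ versus $(-1)^{n-1}$, which works out as shown.
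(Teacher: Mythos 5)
Your proof is correct and follows essentially the same route as the paper's: telescoping $\Prob[T_{\theta}^{\,U}=n]=p_{n-1}^{U}(\theta)-p_{n}^{U}(\theta)$, substituting the closed form from Corollary~\ref{cor:Thm 3} at both indices, and then collapsing $2n\,\hJ_{n}-\hJ_{n+1}$ via the recursion \eqref{hwJ1}. The only difference is cosmetic: you add a sanity check at $n=1$ and you correctly attribute the second equality to Corollary~\ref{cor:Thm 3} (the paper's text refers to Theorem~\ref{thm:main_3}, but the corollary is what is actually being applied).
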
 

\begin{proof}
This follows from
\begin{align*}
\Prob[T_{\theta}^{\,U} = n]\; &=\;p_{n-1}^{\,U}(\theta)\,-\,p_{n}^{U}(\theta)\\
&=\ \frac{2n \hJ_{n}(1/\theta) \, -\, \hJ_{n+1}(1/\theta)}{2^{n}\, n!}\ =\ \frac{(-1)^{n-1}\wJ_{n+1}(1/\theta)}{2^{n}\,n!}
\end{align*}
for every $n\ge 1$, where the second equality holds by Theorem~\ref{thm:main_3} and the third one by \eqref{hwJ1}.\qed
\end{proof}

We note as a particular consequence that the mapping $\theta\mapsto \Prob[T_{\theta}^{\,U} = n]$ is decreasing on $[2,\infty)$ for all $n\ge 1$. This follows because the polynomials $(-1)^{n-1} \wJ_{n+1}(\theta)$ have only positive coefficients, and it refines the assertion that $\theta\mapsto 1- \ell(\theta) = \sum_{n\ge 1}\Prob[T_{\theta}^{\,U} = n]$ decreases in $\theta$ which is clear from the above series expansion for $\ell(\theta).$
 
\subsection{Asymptotic behaviour}
\label{Asymp3}

The next result is a further consequence of Theorem \ref{thm:main_3} and provides an unexpected extension of a well-known result for dual pairs of ladder epochs of  ordinary random walks (which occurs here if $\theta=1$) to general AR(1) processes with drift $\theta>1$ and symmetric, continuous innovation law. To see the connection with ladder epochs, we point out the obvious facts that
\begin{equation*}
   p_{n}(\theta)\ =\ \Prob[T_{\theta}>n]\ \to\ \Prob[T_{\theta}=\infty]\qquad\text{as }n\to\infty, 
\end{equation*}
thus $\ell(\theta)=\Prob[T_{\theta}=\infty]$, and that $T_{\theta}=\inf\{n:Y_{n}<0\}$ can be viewed as the first descending ladder epoch of $(Y_{n})_{n\ge 0}$.
 
\begin{Prop} \label{Hout}
Given the assumptions of Theorem \ref{thm:main_3}, the identity
\begin{equation}\label{eq:ladder epoch id}
\ell(\theta)\ =\ \Prob[T_{\theta}=\infty]\ =\ \frac{1}{\Erw[T_{1/\theta}]}
\end{equation}
holds true, and the terms are positive if and only if  $\theta>1$ and $\Erw\log (1+\vert X_{1}\vert)<\infty$.
\end{Prop}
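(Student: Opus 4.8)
The plan is to read off \eqref{eq:ladder epoch id} from the factorization in Theorem~\ref{thm:main_3} by an Abelian argument, and then to settle positivity by analysing the almost sure behaviour of the partial sums of the ``dual perpetuity'' $\sum_{k\ge1}\theta^{-k}X_{k}$. Since $\{T_{\theta}>n\}\downarrow\{T_{\theta}=\infty\}$, continuity of probability gives $\ell(\theta)=\lim_{n}p_{n}(\theta)=\Prob[T_{\theta}=\infty]$, and the elementary tail-sum identity yields $\sum_{n\ge0}p_{n}(1/\theta)=\sum_{n\ge0}\Prob[T_{1/\theta}>n]=\Erw[T_{1/\theta}]\in(0,\infty]$. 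Writing $P(z)=\sum_{n}p_{n}(\theta)z^{n}$ and $Q(z)=\sum_{n}p_{n}(1/\theta)z^{n}$, Theorem~\ref{thm:main_3} says $(1-z)P(z)Q(z)=1$ on $(-1,1)$. Letting $z\uparrow1$ one has $(1-z)P(z)\to\ell(\theta)$ by the classical Abelian theorem (as $p_{n}(\theta)\to\ell(\theta)$) and $Q(z)\uparrow\Erw[T_{1/\theta}]$ by Abel's theorem for power series with nonnegative coefficients. Passing to the limit in $(1-z)P(z)=1/Q(z)$ then gives $\ell(\theta)=1/\Erw[T_{1/\theta}]$ with the convention $1/\infty=0$, which is \eqref{eq:ladder epoch id}; in particular all three quantities are simultaneously positive.

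To characterise positivity, note that dividing $Y_{n}=\sum_{k=1}^{n}\theta^{n-k}X_{k}$ by $\theta^{n}>0$ gives $\ell(\theta)=\Prob[\Sigma\le0]$, where $\Sigma:=-\inf_{n\ge1}\sum_{k=1}^{n}\theta^{-k}X_{k}\in(-\infty,+\infty]$. For the ``only if'' direction I would show that $\Sigma=+\infty$ almost surely whenever it is not the case that $\theta>1$ and $\Erw\log(1+|X_{1}|)<\infty$, whence $\ell(\theta)=0$. If $\theta=1$ this follows at once from \eqref{Sparre}, which forces $p_{n}(1)\to0$. If $\theta\in(0,1)$ then $\Prob[\theta^{-k}|X_{k}|>M]=\Prob[|X_{1}|>M\theta^{k}]\to1$ for every $M$, while if $\theta>1$ and $\Erw\log(1+|X_{1}|)=\infty$ then $\sum_{k}\Prob[|X_{1}|>M\theta^{k}]=\infty$; in both cases Borel--Cantelli gives $\limsup_{k}\theta^{-k}|X_{k}|=\infty$ a.s., so the partial sums $\sum_{k\le n}\theta^{-k}X_{k}$ are a.s.\ unbounded. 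Since the summands are symmetric and independent of the past, the tail events $\{\liminf=-\infty\}$ and $\{\limsup=+\infty\}$ have the same probability and are trivial by Kolmogorov's $0$--$1$ law, hence both have probability one; therefore $\Sigma=+\infty$ a.s.

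For the ``if'' direction, assume $\theta>1$ and $\Erw\log(1+|X_{1}|)<\infty$; the plan is to produce an event of positive probability on which $Y_{n}>0$ for all $n$. Fix $\varepsilon>0$ with $\Prob[X_{1}\ge\varepsilon]>0$, set $\delta_{k}=\theta^{k}/(Ck^{2})$ for $k\ge2$ with $C$ so large that $\sum_{k\ge2}\delta_{k}/\theta^{k}<\varepsilon/\theta$, and observe that $\sum_{k\ge2}\Prob[|X_{1}|>\delta_{k}]<\infty$ because $\Erw\log^{+}|X_{1}|<\infty$, so that $\prod_{k\ge2}(1-\Prob[|X_{1}|>\delta_{k}])>0$. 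On the independent event $\{X_{1}\ge\varepsilon\}\cap\bigcap_{k\ge2}\{X_{k}\ge-\delta_{k}\}$, which thus has positive probability, telescoping $Y_{k}/\theta^{k}\ge Y_{k-1}/\theta^{k-1}-\delta_{k}/\theta^{k}$ from $Y_{1}/\theta\ge\varepsilon/\theta$ yields $Y_{n}/\theta^{n}\ge\varepsilon/\theta-\sum_{k\ge2}\delta_{k}/\theta^{k}>0$ for every $n$, so $T_{\theta}=\infty$ there and $\ell(\theta)>0$; by \eqref{eq:ladder epoch id} this also gives $\Erw[T_{1/\theta}]<\infty$.

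The delicate point of the whole argument is precisely this last step when $\theta$ is only slightly larger than $1$ and the innovation law has small or disconnected support: then the chain cannot be forced upward by a single large jump, and one must instead keep the entire future excursion positive, which succeeds only because $\sum_{k}\Prob[|X_{1}|>\theta^{k}/(Ck^{2})]$ is finite, that is, exactly under the logarithmic moment hypothesis. A minor additional care is needed to pass to the limit in the Abelian step when $\Erw[T_{1/\theta}]=\infty$, and in the $0$--$1$-law argument producing $\liminf=-\infty$.
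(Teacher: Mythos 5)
Your proof is correct, and the two halves stand in an interesting relation to the paper's argument. For the identity $\ell(\theta)=\Prob[T_{\theta}=\infty]=1/\Erw[T_{1/\theta}]$ you use the generating-function version of Theorem~\ref{thm:main_3} and pass to the limit $z\uparrow1$ via Abel/monotone convergence; the paper instead works directly with the discrete identity $\sum_{k=0}^{n}p_{n-k}(\theta)p_{k}(1/\theta)=1$ and obtains the two inequalities $\Prob[T_{\theta}=\infty]\,\Erw[T_{1/\theta}]\le1$ and $\ge1$ by a split-and-limit argument. These are essentially the same Abelian argument in two dialects, so I count this part as the same route. Where you genuinely diverge is in the characterisation of positivity: the paper reads off $\Erw[T_{1/\theta}]<\infty\iff\theta>1$ and $\Erw\log(1+\lvert X_{1}\rvert)<\infty$ from the known positive-recurrence criterion for AR(1) chains (GolMal/GZ), implicitly using that positive recurrence is equivalent to integrability of the first passage time below zero, whereas you prove the equivalence from scratch. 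Your ``only if'' part uses that $Y_{n}\ge0$ for all $n$ is equivalent to the partial sums $S_n=\sum_{k\le n}\theta^{-k}X_{k}$ staying nonnegative, and then shows $\liminf S_n=-\infty$ a.s.\ under the failing hypotheses via Borel--Cantelli, symmetry and Kolmogorov's $0$--$1$ law; one compressed step worth making explicit is that a.s.\ unboundedness of $\lvert S_n\rvert$ alone does not give $\liminf S_n=-\infty$---you additionally need that $\{\sup_n S_n=\infty\}$ is a tail event of positive (hence full) probability, using $\Prob[\sup S_n=\infty]=\Prob[\inf S_n=-\infty]$ by symmetry and $\Prob[\sup S_n=\infty\text{ or }\inf S_n=-\infty]=1$. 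Your ``if'' part constructs an explicit positive-probability event forcing $Y_n>0$ for all $n$ via $\Prob[X_{1}\ge\varepsilon]>0$, $\sum_{k}\Prob[\lvert X_{1}\rvert>\theta^{k}/(Ck^{2})]<\infty$ and telescoping of $Y_k/\theta^k$. This constructive route is more elementary and self-contained than the paper's appeal to the ergodic criterion, at the cost of some bookkeeping; the paper's route is shorter but silently relies on the equivalence between positive Harris recurrence and finiteness of $\Erw T$, which is itself nontrivial. Both are valid.
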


If $\theta=1$, then $(Y_{n})_{n\ge 0}$ is an ordinary random walk with continuous and symmetric increment law. Denoting by $T_{1}^{*}=\inf\{n:Y_{n}\le 0\}$ its first weakly descending ladder epoch, it follows from the famous Spitzer-Baxter identities, see e.g.~\cite[Sect.\,8.4]{Chung:01}, that $(T_{1},T_{1}^{*})$ form a dual pair satisfying
\begin{equation}\label{eq:dual pair id}
\Prob[T_{1}=\infty]\ =\ \frac{1}{\Erw T_{1}^{*}}\quad\text{and}\quad\Prob[T_{1}^{*}=\infty]\ =\ \frac{1}{\Erw T_{1}}.
\end{equation}
But under the given additional assumptions on the increment law, it follows immediately that $T_{1}$ and $T_{1}^{*}$ are identically distributed, so that \eqref{eq:ladder epoch id} can indeed be viewed as an extension of \eqref{eq:dual pair id} of the aforementioned kind. Finally, we should note that all quantities in \eqref{eq:dual pair id} are $0$ because the random walk is symmetric and thus particularly oscillating.

\begin{proof}[of Proposition \ref{Hout}]
We already pointed out in the Introduction that an AR(1) process $(Y_{n})_{n\ge 0}$ defined by \eqref{eq:AR(1)} is positive recurrent if and only if  $\theta\in (-1,1)$ and $\Erw\log (1+\vert X_{1}\vert)<\infty$. In particular, we infer here that $\theta>1$ is necessary and sufficient for
\begin{equation*}
\Erw T_{1/\theta}\ =\ \sum_{n\ge 0}\Prob[T_{1/\theta}>n]\ =\ \sum_{n\ge 0}p_{n}(1/\theta)\ <\ \infty.
\end{equation*}
Now use \eqref{eq:duality theta>0} in Theorem \ref{thm:main_3} to infer
\begin{equation*}
1\ =\ \sum_{k=0}^{n}\Prob[T_{\theta}>k]\,\Prob[T_{1/\theta}>n-k]\ \ge\ \Prob[T_{\theta}=\infty]\sum_{k=0}^{n}\Prob[T_{1/\theta}>n-k]
\end{equation*}
for each $n\ge 0$ and then upon letting $n$ tend to $\infty$ that
\begin{equation*}
   \Prob[T_{\theta}=\infty]\,\Erw T_{1/\theta}\ \le\ 1, 
\end{equation*}
in particular that $\Prob[T_{\theta}=\infty]>0$ entails $\Erw T_{1/\theta}<\infty$ and so $\theta>1$. Conversely, if $\Erw T_{1/\theta}<\infty$, then \eqref{eq:duality theta>0} provides, for any $m\ge 0$ and $n>m$
\begin{align*}
1\ \le\ \Prob[T_{\theta}=\infty]\sum_{k=0}^{m}\Prob[T_{1/\theta}>k]\ +\ \sum_{k=m+1}^{n}\Prob[T_{1/\theta}>k]
\end{align*}
and then upon letting $n$ tend to $\infty$ that
\begin{align*}
1\ \le\ \Prob[T_{\theta}=\infty]\sum_{k=0}^{m}\Prob[T_{1/\theta}>k]\ +\ \sum_{k>m}\Prob[T_{1/\theta}>k].
\end{align*}
By finally taking the limit $m\to\infty$, we arrive at
\begin{equation*}
   1\ \le\ \Prob[T_{\theta}=\infty]\,\Erw T_{1/\theta},
\end{equation*}
which together with the first part proves the equivalence of $\Prob[T_{\theta}=\infty]>0$ and $\Erw T_{1/\theta}<\infty$ (and thus $\theta>1$) as well as Eq.\,\eqref{eq:ladder epoch id}.\qed
\end{proof}

\begin{Rem}\rm
If $\theta>1$ and for biexponential innovations, a combination of Proposition \ref{Hout} and Formula (34) in \cite{Larr04} implies
\begin{equation*}
   \ell(\theta)\ =\ \frac{(\theta^{-1}; \theta^{-2})_\infty}{(\theta^{-1}; \theta^{-2})_\infty +\, (\theta^{-2}; \theta^{-2})_\infty}\; >\; 0 
\end{equation*}
which can also be seen directly from \eqref{thet} and the Hardy-Littlewood Tauberian theorem.
\end{Rem}

\begin{Rem}\rm
Back to the case when innovations are uniform on $[-1,1]$, Proposition \ref{Hout} provides   
\begin{equation}\label{Limes}
\ell(\theta)\; =\; \frac{1}{{\displaystyle \sum_{n\ge 0} p_{n}^{U}(1/\theta)}}\; >\; 0\qquad\text{as $n\to\infty$,}
\end{equation}
for any $\theta > 1$. This should be compared to the following: for $\theta\ge 2$, we have already given in the previous subsection an alternative formula for $\ell(\theta)$ in terms of the sequence of modified Mallows-Riordan polynomials, namely
\begin{equation*}
   \ell(\theta)\; =\; \sum_{n\ge 0}\, \frac{(-1)^{n} \wJ_{n+1}(1/\theta) }{2^{n}\, n!}\; =\; \sum_{k\ge 0} a_{k}\, \theta^{-k}, 
\end{equation*}
with a somewhat mysterious sequence of coefficients $a_{k}$ in the convergent series representation on the right-hand side.
\end{Rem}

The final result of this subsection confirms that $p_{n}^{U}(\theta)$ for $\theta>1$ approaches its limit $\ell(\theta)$ again at an exponential rate, in the case $\theta\ge 2$ given by the first positive root
\begin{gather*}
\nu_{\theta}\ =\ \inf\{ z > 0: \; L(\theta,z) = 0\}
\shortintertext{of the function}
L(\theta,z)\; =\; \frac{1}{a_{1}(1/\theta)}\; +\; \sum_{k\ge 2} \frac{{\displaystyle 1 - z/\lambda_{1}(\theta)}}{{\displaystyle a_{k}(1/\theta) \lpa 1 - z/\lambda_{k}(\theta) \rpa}}.
\end{gather*}
Here we have used the notation from Proposition \ref{Asymp1} and Remark \ref{Conv}(a), and further defined $\lambda_{k}(\theta) = 2(1-1/\theta)a_{k}(1/\theta)$. It should be recalled that $(a_{k}(1/\theta))_{k\ge 1}$ and $(\lambda_{k}(\theta))_{k\ge 1}$ are increasing sequences of positive numbers with $\lambda_{1}(\theta) > 1$ and 
\begin{equation*}
   \sum_{k\ge 1} \frac{1}{a_{k}(1/\theta)}\; = \; 1\; < \;\infty. 
\end{equation*}
This implies that $L(\theta,z)$ is real-analytic in $z$ on $[0,\lambda_2(\theta))$, where it decreases from 1 to $-\infty$ and has a unique and simple root $\nu_{\theta}\in (\lambda_{1}(\theta), \lambda_2(\theta))$. 

\medskip

\begin{Prop}
\label{ExpQ}
For any $\theta > 1$, there exists $\kappa_{\theta} > 0$ such that
\begin{equation}\label{eq:exp bound theta>1}
p_{n}^{U}(\theta)\, -\, \ell(\theta) \; \le\; e^{-\kappa_{\theta} n}\qquad\text{for all }n\ge 0,
\end{equation}
and if $\theta \ge 2$, there further exists $c_{\theta} > 0$ such that
\begin{equation}\label{eq:exp rate theta>1}
p_{n}^{U}(\theta)\, -\, \ell(\theta)\; \sim \; \frac{1}{c_{\theta}\,\nu_{\theta}^{n}}\qquad \text{as }n\to\infty.
\end{equation}
\end{Prop}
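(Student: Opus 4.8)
The plan is to read off the asymptotics of $p_n^U(\theta)-\ell(\theta)$ from the dominant singularity of its generating function, the essential input being the factorization of Theorem~\ref{thm:main_3}. Writing $Q(z)=\sum_{n\ge0}p_n^U(1/\theta)\,z^n$, Theorem~\ref{thm:main_3} gives $\sum_{n\ge0}p_n^U(\theta)\,z^n=\bigl((1-z)Q(z)\bigr)^{-1}$ and Proposition~\ref{Hout} gives $\ell(\theta)=1/Q(1)$, so that
\begin{equation*}
G(z)\;:=\;\sum_{n\ge0}\bigl(p_n^U(\theta)-\ell(\theta)\bigr)z^n\;=\;\frac{1/Q(z)-1/Q(1)}{1-z}.
\end{equation*}
Introducing the (possibly defective) first-passage generating function $\Phi(z)=1-1/Q(z)=\Erw\bigl[z^{T_\theta^{\,U}}\mathbf 1_{\{T_\theta^{\,U}<\infty\}}\bigr]$, one has $p_n^U(\theta)-\ell(\theta)=\sum_{k>n}\Prob[T_\theta^{\,U}=k]$, so everything reduces to locating the zeros of $Q$.

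For the bound \eqref{eq:exp bound theta>1}: $Q$ has radius of convergence $R_0>1$ — for $\theta\ge2$ by Proposition~\ref{Asymp1} applied to $1/\theta\in(0,\tfrac12]$ (there $\lambda_{1/\theta}>1$), and for $\theta\in(1,2)$ by \cite{HKW20}, whose hypotheses the uniform law on $[-1,1]$ satisfies (the rough estimate \eqref{AMX} from \cite{AMZ21} already suffices, as one only needs $\limsup_n p_n^U(1/\theta)^{1/n}<1$). On the other hand $\Phi$ has nonnegative coefficients and $\Phi(1)=1-\ell(\theta)<1$, whence $|1-\Phi(z)|\ge1-\Phi(|z|)\ge\ell(\theta)>0$ for $|z|\le1$; thus $Q=(1-\Phi)^{-1}$ has no zero in $|z|\le1$ and, being analytic there up to radius $R_0$, no zero in $|z|<R$ for some $R\in(1,R_0)$. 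Hence $\Phi=1-1/Q$ is analytic on $|z|<R$, Cauchy's estimates give $\Prob[T_\theta^{\,U}=k]=O(r^{-k})$ for $r\in(1,R)$, and summing the tail gives $p_n^U(\theta)-\ell(\theta)=O(r^{-n})$; since this quantity is nonincreasing in $n$ and below $1$ at $n=0$, \eqref{eq:exp bound theta>1} follows for $\kappa_\theta>0$ small enough.

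For the sharp statement \eqref{eq:exp rate theta>1}, let $\theta\ge2$ and $r=1/\theta\in(0,\tfrac12]$, so that $p_n^U(1/\theta)=J_{n+1}(r)/(2^nn!)$ by Theorem~\ref{thm:main_1} and, summing the convergent representation \eqref{SerRep} over $n$,
\begin{equation*}
Q(z)\;=\;\sum_{k\ge1}\frac{1}{a_k(r)}\cdot\frac{1}{1-z/\lambda_k(\theta)}\;=\;\sum_{k\ge1}\frac{1}{a_k(r)-z/(2(1-r))},
\end{equation*}
with $\lambda_k(\theta)=2(1-r)a_k(r)$ and $Q(0)=\sum_k1/a_k(r)=1$. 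Multiplying by $1-z/\lambda_1(\theta)$ identifies $Q(z)\bigl(1-z/\lambda_1(\theta)\bigr)=L(\theta,z)$, so $Q$ and $L(\theta,\cdot)$ share the same zeros. Two elementary observations isolate the dominant singularity: writing $w=z/(2(1-r))$, taking imaginary parts in $\sum_k(a_k(r)-w)^{-1}=0$ yields $\operatorname{Im}(w)\sum_k|a_k(r)-w|^{-2}=0$, so every zero of $Q$ is real, and since all summands are positive for $w\le a_1(r)$ and for $w<0$, the real zeros of $Q$ interlace its poles $\lambda_k(\theta)$, the smallest being the simple root $\nu_\theta\in(\lambda_1(\theta),\lambda_2(\theta))$ of $L(\theta,\cdot)$ recalled before the statement ($Q$ is strictly increasing between consecutive poles, so $Q'(\nu_\theta)>0$ and $-\nu_\theta$ is not a zero). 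Consequently $G$ is holomorphic on $\{\,|z|<\nu_\theta'\,\}\setminus\{\nu_\theta\}$, where $\nu_\theta'\in(\lambda_2(\theta),\lambda_3(\theta))$ is the second positive zero of $Q$, with a unique, simple pole at $\nu_\theta$ and no other singularity on $|z|=\nu_\theta$; subtracting its principal part $\dfrac{1}{(\nu_\theta-1)Q'(\nu_\theta)}\cdot\dfrac{1}{\nu_\theta-z}$ leaves a function holomorphic on $|z|<\nu_\theta'$ with coefficients $o(\nu_\theta^{-n})$, and reading off coefficients yields \eqref{eq:exp rate theta>1} with $c_\theta=\nu_\theta(\nu_\theta-1)Q'(\nu_\theta)>0$ (positive since $\nu_\theta>\lambda_1(\theta)=\lambda_{1/\theta}>1$).

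The only genuinely non-self-contained step is the exponential decay of $p_n^U(1/\theta)$ when $\theta\in(1,2)$, i.e.\ for drift $1/\theta\in(\tfrac12,1)$, which lies outside the range of Proposition~\ref{Asymp1} and is taken from \cite{HKW20}; for $\theta\ge2$ all ingredients — the partial-fraction form of $Q$ coming from \eqref{SerRep}, the reality and simplicity of its zeros, and the passage from a meromorphic $G$ with a single simple dominant pole to coefficient asymptotics — are elementary, the only point needing a little care being the absence of a further singularity of $G$ on the circle $|z|=\nu_\theta$, which is exactly what the ``all zeros are real'' observation supplies.
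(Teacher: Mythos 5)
Your proposal is correct and takes the same general route as the paper (the generating-function identity from Theorem~\ref{thm:main_3} combined with the partial-fraction form of $Q(z)=\sum_n p_n^U(1/\theta)z^n$ obtained from \eqref{SerRep}), but the two arguments diverge in the key step that identifies $\nu_\theta$ as the \emph{unique} singularity on the dominant circle. The paper carries out a Fubini computation to produce an auxiliary function $\tilde L(\theta,z)$, writes $\sum_n r_n^U(\theta)z^n = \ell(\theta)\tilde L(\theta,z)/L(\theta,z)$, shows $\tilde L>0$ on $[0,\lambda_2(\theta))$, and then concludes meromorphicity with a unique simple pole ``putting everything together while skipping details''; this leaves implicit the verification that $L$ (equivalently $Q$) has no \emph{complex} zero in $B(0,\lambda_2(\theta))$. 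You supply exactly that missing piece by a cleaner observation: writing $Q(z)=\sum_k\bigl(a_k(r)-w\bigr)^{-1}$ with all $a_k(r)$ real and positive (Laguerre, since $r=1/\theta\in(0,\tfrac12]$), the imaginary part of any zero satisfies $\operatorname{Im}(w)\sum_k|a_k(r)-w|^{-2}=0$, so every zero of $Q$ is real, and these zeros interlace the poles $\lambda_k(\theta)$; hence $\nu_\theta$ is the only singularity in $B(0,\lambda_2(\theta))$ and no further singularity sits on $|z|=\nu_\theta$. This yields a self-contained derivation of the sharp rate, together with the explicit residue formula $c_\theta=\nu_\theta(\nu_\theta-1)Q'(\nu_\theta)>0$. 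Your treatment of the coarse bound \eqref{eq:exp bound theta>1} is also slightly different from the paper's: the paper cites Theorem~1 of \cite{KN08} for exponential decay of $p_n^U(1/\theta)$, whereas you split into $\theta\ge2$ (Proposition~\ref{Asymp1}) and $\theta\in(1,2)$ (via \cite{HKW20} or \cite{AMZ21}), and then use positivity of the coefficients of $\Phi=1-1/Q$ together with $\Phi(1)=1-\ell(\theta)<1$ to show $Q$ has no zero in the closed unit disk; both routes are valid and the citation swap is immaterial. In short: correct proof, same skeleton, but your reality-of-zeros argument for $Q$ is a genuine simplification that also tightens a step the paper only sketches.
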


\begin{proof}
Putting $r_{n}^{\,U}(\theta) = p_{n}^{U}(\theta) -\ell(\theta) > 0$, it follows from \eqref{Limes} and Theorem~\ref{thm:main_3} that
\begin{equation*}
   \Bigg(\sum_{n\ge 0} r_{n}^{\,U}(\theta)\, z^{n}\Bigg)\Bigg(\sum_{n\ge 0} p_{n}^{U}(1/\theta)\, z^{n}\Bigg)\; =\;  \ell(\theta)\,\sum_{n\ge 0}p_{n}^{U}(1/\theta)\, (1+\cdots +z^{n-1}) 
\end{equation*}
for any $z\in (0,1)$ and $\theta > 1$. Moreover, the right-hand side has an analytic extension to $(0, z_*(\theta))$ for some $z_*(\theta) > 1$ satisfying $p_{n}^{U}(1/\theta) z_*(\theta)^{n}\to 0 $ as $n\to\infty$, see e.g.\ Theorem 1 in \cite{KN08} for the latter property. This clearly implies \eqref{eq:exp bound theta>1}.

\vspace{.1cm}
For $\theta \ge 2$, we have seen in Remark \ref{Conv}(a) that, with the above notation,
\begin{equation*}
   p^{\,U}_{n}(1/\theta)\; =\; \sum_{k\ge 1} \frac{1}{a_{k}(1/\theta)\,\lambda_{k}(\theta)^{n}} 
\end{equation*}
for every $n\ge 0$. By plugging this in the above equation, we find after some easy simplifications and upon using Fubini's theorem that
\begin{equation*}
   \sum_{n\ge 0} r_{n}^{\,U}(\theta)\, z^{n}\; =\; \frac{\ell(\theta)\,{\wtil L}(\theta,z)}{L(\theta,z)} 
\end{equation*}
for any $z\in [0,\lambda_{1}(\theta))$, with the notation ${\wtil a_{k}}(1/\theta) = (\lambda_{k}(\theta) -1) a_{k}(1/\theta)$ for $k\ge 1$ and 
\begin{equation*}
   {\wtil L}(\theta,z)\; =\; \frac{1}{{\wtil a_{1}}(1/\theta)}\; +\; \sum_{k\ge 2} \; \frac{{\displaystyle 1 - z/\lambda_{1}(\theta)}}{{\displaystyle {\wtil a_{k}}(1/\theta) \lpa 1 - z/\lambda_{k}(\theta) \rpa}}\cdot
\end{equation*}
As $(\lambda_{k}(\theta))_{k\ge 1}$ is increasing, the function $z\mapsto {\wtil L}(\theta,z)$, which is real-analytic and decreasing on $[0,\lambda_2(\theta))$ (as a sum of real-analytic and decreasing functions) must also satisfy ${\wtil L}(\theta,z) >0$. Putting everything together while skipping details, we finally obtain that the function
\begin{equation*}
   z\;\mapsto\;\sum_{n\ge 0} r_{n}^{\,U}(\theta)\, z^{n}
\end{equation*}
is meromorphic on $\cB(0,\lambda_2(\theta))$, where it has a unique and simple pole at $\nu_{\theta}$. This completes the proof as in Proposition \ref{Asymp1}.\qed
\end{proof}

\section{Miscellanea}
\label{sec:miscellaneous}

\subsection{The Tutte polynomial on a complete graph}\label{SoliTutti}

Given a finite graph $G$, its dichromatic polynomial is defined as the bivariate generating function
\begin{equation*}
   Q(G,x,\theta)\; =\; \sum_{H\subseteq G} x^{k(H)}\theta^{e(H) + k(H) -v(H)}, 
\end{equation*}
where summation ranges over all spanning subgraphs of $G$ and $e(H)$, $k(H)$, $v(H)$ denote the number of edges, connected components and vertices of $H$, respectively. For the complete graph $K_{n}$ with $n$ vertices, the exponential generating function of the $T_{n}(x,\theta) = Q(K_{n},x,\theta)$ has been computed in \cite[Eq.\,(17)]{Tutte67} as 
\begin{align}\label{ExpTutt}
\sum_{n\ge 0}\, T_{n}(x,\theta)\, \frac{z^{n}}{n!} &\; \fps\; \Bigg(\sum_{n \ge 0}\frac{(\theta +1)^{n(n-1)/2}}{\theta^{n}}\,\frac{z^{n}}{n!}\Bigg)^{\!\theta x}\nonumber\\
&\; \fps\; \exp\Bigg[x \sum_{n\ge 1} J_{n}(\theta +1)\, \frac{z^{n}}{n!}\Bigg],
\end{align}
where \eqref{id:MR} has been utilized for the second equality. Introducing the modified polynomials $T^{}_{K_{0}}(x,\theta) = 1$ and
\begin{equation*}
   T^{}_{K_{n}}(x,\theta)\; =\; 1\, +\, \frac{T_{n}(x-1,\theta-1)-1}{x-1} 
\end{equation*}
for $n\ge 1$, we have the identity
\begin{equation}\label{identification}
T^{}_{K_{n}}(1,\theta)\; =\; J_{n}(\theta).
\end{equation}
For any $q\in (0,1]$ and $t\ge 0$, consider now the Tutte polytope ${\bf T}_{n}(q,t)\subset \R^{n}$ defined as the set of $(x_{1},\ldots,x_{n})$ satisfying the constraints $x_{n} \ge 1-q$ and
\begin{equation*}
   qx_{j}\;\le\; q(1+t)x_{j-1}\,-\,t(1-q)(1-x_{i-1}) 
\end{equation*}
for all $1\le i\le j\le n$, with the convention $x_{0} = 1$. The limiting Tutte polytope is obtained as $q\to 0$ and equals 
\begin{equation*}
    {\bf T}_{n}(0,t)\; =\; \{ 1\le x_{1}\le 1+t, \; 1\le x_{i}\le (1+t) x_{i-1},\; i=2,\ldots, n\}. 
\end{equation*}
The following corollary follows directly from Proposition \ref{prop:MR_volume} and \eqref{identification} above.

\begin{Cor}\label{TuttiPoly}
For all $n\ge 1$ and $t\ge 0$, one has
\begin{equation*}
    {\rm vol}\,{\bf T}_{n}(0,t)\; =\; t^{n}\, T^{}_{K_{n+1}}(1,1+t)/n!. 
\end{equation*} 
\end{Cor}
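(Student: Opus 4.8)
The plan is to unwind the definitions and reduce everything to Proposition~\ref{prop:MR_volume}. First I would specialize the defining inequalities of the Tutte polytope ${\bf T}_{n}(q,t)$ to $q\to 0$: the constraint $x_{n}\ge 1-q$ becomes $x_{n}\ge 1$, and dividing the inequality $qx_{j}\le q(1+t)x_{j-1}-t(1-q)(1-x_{j-1})$ by $q$ and letting $q\downarrow 0$ leaves $x_{j}\le(1+t)x_{j-1}$, while the constraints with $j>i$ become redundant once one has the chain of constraints with consecutive indices (since $x_{j}\le(1+t)x_{j-1}\le\cdots$ and all $x_{i}\ge 1$). This justifies the stated formula for ${\bf T}_{n}(0,t)=\{1\le x_{1}\le 1+t,\ 1\le x_{i}\le(1+t)x_{i-1},\ i=2,\dots,n\}$, so that
\begin{equation*}
   {\rm vol}\,{\bf T}_{n}(0,t)\;=\;\int_{1}^{1+t}\int_{1}^{(1+t)x_{1}}\cdots\int_{1}^{(1+t)x_{n-1}}dx_{n}\ldots dx_{1}.
\end{equation*}

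Next I would set $\theta=1+t$, so $\theta\ge 1$ and $t=\theta-1\ge 0$, and observe that the above integral is exactly the left-hand side of \eqref{eq:Alin's conj} in Proposition~\ref{prop:MR_volume} (after renaming variables; the nesting $\int_{1}^{\theta}\int_{1}^{\theta x_{1}}\cdots$ matches ours since the integrand is $1$ and the order of integration is immaterial). Hence
\begin{equation*}
   {\rm vol}\,{\bf T}_{n}(0,t)\;=\;\frac{(\theta-1)^{n}J_{n+1}(\theta)}{n!}\;=\;\frac{t^{n}J_{n+1}(1+t)}{n!}.
\end{equation*}
Finally I would invoke the identification \eqref{identification}, namely $T^{}_{K_{n+1}}(1,\theta)=J_{n+1}(\theta)$ with $\theta=1+t$, to rewrite $J_{n+1}(1+t)=T^{}_{K_{n+1}}(1,1+t)$, yielding ${\rm vol}\,{\bf T}_{n}(0,t)=t^{n}T^{}_{K_{n+1}}(1,1+t)/n!$, which is the claim. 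For completeness one should also recall why \eqref{identification} holds: it follows by comparing \eqref{ExpTutt} with \eqref{id:MR}, since $T_{n}(x-1,\theta-1)/$-type modification matches the exponential generating function $\exp[x\sum_{n\ge1}J_{n}(\theta+1)z^{n}/n!]$ coefficientwise, and the normalization $T^{}_{K_{n}}(x,\theta)=1+(T_{n}(x-1,\theta-1)-1)/(x-1)$ extracts the linear-in-$x$ part, whose coefficient is $\sum$ of $J_{n}(\theta)$-contributions; evaluating at $x=1$ isolates $J_{n}(\theta)$.

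The only genuine obstacle I foresee is the careful limiting argument $q\to 0$ in the polytope constraints: one must check that the family of polytopes ${\bf T}_{n}(q,t)$ indeed converges (say in the Hausdorff sense, or just that the volume is continuous in $q$ at $q=0$) and that no lower-dimensional degeneracy or unbounded behaviour occurs, so that ${\rm vol}\,{\bf T}_{n}(q,t)\to{\rm vol}\,{\bf T}_{n}(0,t)$ and the redundancy of the $j>i$ constraints in the limit is legitimate. Everything else is a direct substitution into results already established in the excerpt (Proposition~\ref{prop:MR_volume} and \eqref{identification}), so the corollary is essentially immediate once the polytope description in the limit is in hand — which is why the statement is phrased as a corollary of Proposition~\ref{prop:MR_volume}.
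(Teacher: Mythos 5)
Your main argument matches the paper's proof exactly: write ${\rm vol}\,{\bf T}_n(0,t)$ as the nested integral $\int_1^{1+t}\int_1^{(1+t)x_1}\cdots\int_1^{(1+t)x_{n-1}}dx_n\cdots dx_1$, apply Proposition~\ref{prop:MR_volume} with $\theta=1+t$ to get $t^n J_{n+1}(1+t)/n!$, and convert $J_{n+1}(1+t)$ into $T_{K_{n+1}}(1,1+t)$ via \eqref{identification}. The paper states the corollary ``follows directly'' from precisely these two ingredients, so you have the intended proof.

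The ``obstacle'' you flag about the $q\to0$ limit is a non-issue for this corollary: the paper takes the displayed formula for ${\bf T}_n(0,t)$ as the \emph{definition} of the limiting Tutte polytope, with the assertion that it equals $\lim_{q\to0}{\bf T}_n(q,t)$ quoted from \cite{KoPa13} and not used in the proof. So no continuity-of-volume or Hausdorff-convergence argument is needed. As a side remark, your limiting heuristic is also slightly off: the constraint involves $1-x_{i-1}$ (not $1-x_{j-1}$ as you wrote), and after dividing by $q$ the term $t(1-q)(1-x_{i-1})/q$ blows up as $q\to0$ unless $x_{i-1}=1$; the constraint $x_j\le(1+t)x_{j-1}$ therefore survives only from the case $i=1$ (where $x_0=1$), while for $i\ge2$ one instead extracts $x_{i-1}\ge1$ with the rest becoming vacuous. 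This does recover the displayed set, but not by the one-line division you describe. None of this affects the validity of your actual proof of the corollary.
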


The result particularly provides that the volume of the Cayley polytope
\begin{equation*}
   \bC_{n}\;=\;{\rm vol}\,{\bf T}_{n}(0,1)\ =\ \{1\le x_{1}\le 2, \; 1\le x_{i}\le 2 x_{i-1}, \; i= 2, \ldots, n\} 
\end{equation*}
equals $J_{n+1}(2)/n!$, that is $1/n!$ times the number of labeled connected graphs with $n+1$ vertices. This fact was conjectured in~\cite{BeBrLe13} and proved in~\cite{KoPa13}, as a consequence of the more general formula
\begin{equation}\label{eq:Tutte volume}
{\rm vol}\,{\bf T}_{n}(q,t)\; =\; t^{n}\, T^{}_{K_{n+1}}(1+q/t,1+t)/n!
\end{equation} 
for any $n\ge 1, q\in (0,1]$ and $t\ge 0$, see Theorem 1.3 therein. The limiting case $q=0$ follows directly from Proposition \ref{prop:MR_volume}, the proof of which is more elementary than the arguments developed in~\cite{KoPa13}. See also Theorem 3 in~\cite{KoPa14} for another elementary proof of Corollary \ref{TuttiPoly} in the case $t=1$ using partitions of integers. The natural question arises whether the general identity \eqref{eq:Tutte volume} admits a simple proof as well. 

\vspace{.2cm}
Before we finish this subsection by pointing out a curious connection between the Tutte polynomial $T_{n}(x,\theta)$ and a certain Poisson process on $\N$ for $\theta\in [-2,1)$, we prove the following summability criterion for Mallows-Riordan polynomials that seems to have been unnoticed in the literature.

\begin{Lemma}\label{Sum}
For $\theta\ge -1$, the positive series $\sum_{n\ge 1}\frac{J_{n}(\theta)}{n!}$ is finite if and only if  $\theta < 0.$
\end{Lemma}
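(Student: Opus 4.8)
The plan is to analyze the exponential generating function $E(\theta,z)=\sum_{n\ge 0}\theta^{n(n-1)/2}z^n/n!$ together with the identity \eqref{id:MR}, which gives $\sum_{n\ge 1}(\theta-1)^{n-1}J_n(\theta)z^n/n!\fps-\log E(\theta,-z)$. First I would dispose of the case $\theta<0$: here $|\theta|<1$ (indeed $\theta\in[-1,0)$), so $E(\theta,\cdot)$ is an entire function of order zero, hence has a genuine first zero at some finite $z_\theta>0$ (this is exactly the content used in Proposition \ref{Asymp1}). Since $J_n(\theta)/n!$ is, up to the factor $(1-\theta)^{n-1}$, the $n$-th Taylor coefficient of the function $-\log E(\theta,-z)$, which is holomorphic at least on the disk of radius $z_\theta>0$, the coefficients $(1-\theta)^{n-1}J_n(\theta)/n!$ decay geometrically, so in particular $\sum_{n\ge1}J_n(\theta)/n!$ converges (note $1-\theta>1$ only helps, but one must be a touch careful: $(1-\theta)^{n-1}$ grows, yet the radius of convergence of $\sum(1-\theta)^{n-1}J_n(\theta)z^n/n!$ is still positive, namely $z_\theta$, so evaluating at $z=1/(1-\theta)<z_\theta$ when $z_\theta$ is large enough — actually one should simply invoke the known asymptotic \eqref{AsympJn}, $J_{n+1}(\theta)/n!\sim z_\theta^{-1}(2/\lambda_\theta)^n$ with $\lambda_\theta=2(1-\theta)z_\theta$, giving $J_n(\theta)/n!\sim \text{const}\cdot(1-\theta)^{-n}$, whose sum converges since $1-\theta>1$). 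Alternatively, and more self-containedly, one combines Theorem \ref{thm:main_1} with $\sum_{n\ge0}p_n^U(\theta)z^n$ being a convergent power series: since $p_n^U(\theta)=J_{n+1}(\theta)/(2^n n!)$ and $p_n^U(\theta)\to0$, one gets $J_{n+1}(\theta)/n!=2^n p_n^U(\theta)$, and the exponential-rate estimate $\lambda_\theta>2$ for $\theta\in[-1,0)$ (promised in Remark \ref{Asympb} / Remark \ref{Conv}(a)) yields $\sum_n J_{n+1}(\theta)/n!=\sum_n 2^n p_n^U(\theta)<\infty$.

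Next I would handle $\theta\ge0$ and show divergence. For $\theta=0$ one has $J_n(0)=(n-1)!$, so $\sum_{n\ge1}J_n(0)/n!=\sum_{n\ge1}1/n=\infty$. For $\theta\in(0,1)$, $E(\theta,\cdot)$ has order zero but is now strictly positive on $[0,\infty)$ (all its zeros $a_k(\theta)$ are real and negative by the Laguerre-type argument cited after \eqref{SerRep}), so $-\log E(\theta,-z)$ has all coefficients positive and radius of convergence $z_\theta$, the first positive zero of $E(\theta,-z)$; but crucially $z_\theta=1/(1-\theta)\cdot(\lambda_\theta/2)$ with $\lambda_\theta<2$ (again from Remark \ref{Conv}(a)), so $J_{n+1}(\theta)/n!\sim z_\theta^{-1}(1-\theta)^{-n}(\lambda_\theta/2)^{-n}=\text{const}\cdot(2/\lambda_\theta)^n/(1-\theta)^n$... here I must be careful about the sign of the exponent — let me instead argue directly from $2^n p_n^U(\theta)=J_{n+1}(\theta)/n!$ and $\lambda_\theta<2$, which gives $J_{n+1}(\theta)/n!=2^n p_n^U(\theta)\sim (2/\lambda_\theta)^n/z_\theta\to\infty$, so the terms of the series do not even tend to zero, and the series diverges. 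For $\theta=1$, $J_n(1)=n^{n-2}$ by Cayley, so $J_n(1)/n!\sim e^n/(n^{5/2}\sqrt{2\pi})$ by Stirling, which tends to infinity, hence divergence. For $\theta>1$ the coefficients $J_n(\theta)$ are even larger (each is increasing in $\theta$ on $[-1,\infty)$ by Remark \ref{Inc1}), so the series diverges a fortiori; alternatively note $J_{n+1}(\theta)/n!=2^n p_n^U(\theta)\ge 2^n\ell(\theta)\to\infty$ for $\theta\ge 2$ by Proposition \ref{Hout}, and for $\theta\in(1,2)$ one again uses monotonicity in $\theta$ to reduce to $\theta=1$.

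The main obstacle, and the only delicate point, is pinning down the asymptotic rate $\lambda_\theta$ relative to $2$ — i.e.\ establishing $\lambda_\theta>2$ for $\theta\in[-1,0)$ and $\lambda_\theta<2$ for $\theta\in(0,1)$ — which is exactly the dichotomy that makes the series converge below $0$ and diverge above. Since this is already recorded as a consequence of the earlier analysis (Remark \ref{Conv}(a) and the forthcoming Remark \ref{Asympb}), the cleanest writeup simply cites it; the remaining work is the bookkeeping of the boundary cases $\theta\in\{0,1\}$ and the reduction of $\theta>1$ to $\theta=1$ via the coefficientwise monotonicity of $J_n$ in $\theta$. I would present the proof in two short paragraphs — "sufficiency ($\theta<0\Rightarrow$ convergence)" via \eqref{AsympJn} and $\lambda_\theta>2$, and "necessity ($\theta\ge0\Rightarrow$ divergence)" via the term-does-not-vanish argument for $0\le\theta\le1$ plus monotonicity for $\theta>1$.
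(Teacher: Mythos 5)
The proposal has a circularity that undermines both directions. You propose to invoke the inequalities $\lambda_\theta>2$ for $\theta\in[-1,0)$ and $\lambda_\theta<2$ for $\theta\in(0,1)$, citing Remark \ref{Conv}(a) and Remark \ref{Asympb}. But Remark \ref{Conv}(a) merely \emph{announces} these inequalities with a forward reference, and Remark \ref{Asympb} — which comes immediately after the lemma — states explicitly that they are obtained ``as a consequence of the previous lemma'', i.e.\ of Lemma \ref{Sum} itself. The only thing available \emph{before} the lemma is the weak bound $(1-\theta)z_\theta\ge 1$ (equivalently $\lambda_\theta\ge 2$) for $\theta\in[-1,0]$, from Remark \ref{Conv}(b): the map $\theta\mapsto(1-\theta)z_\theta$ is nonincreasing with value $1$ at $0$. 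That weak bound is not enough for convergence — you need the \emph{strict} inequality, and a nonincreasing function equal to $1$ at the right endpoint can perfectly well be constantly $1$ on a neighborhood of $0$. The genuine content of the lemma's proof is precisely to upgrade $\ge$ to $>$, which the paper does by an analyticity argument: if $(1-\theta)z_\theta=1$ for some $\theta<0$, then by monotonicity $(1-\vartheta)z_\vartheta=1$ for all $\vartheta\in(\theta,0)$, which would force the real-analytic function $\vartheta\mapsto\sum_{n\ge0}\vartheta^{n(n-1)/2}/(\vartheta-1)^n$ (namely $E(\vartheta,-1/(1-\vartheta))$) to vanish on an interval, a contradiction. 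You identify the right bottleneck (``pinning down $\lambda_\theta$ relative to $2$'') but wrongly assume it is ``already recorded'' earlier; it is not, and your proof therefore has a gap exactly where the real work is.

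A smaller point: your divergence direction is far more elaborate than necessary and partly suffers from the same circularity (the $\theta\in(0,1)$ case again cites $\lambda_\theta<2$). Since $J_n$ has nonnegative coefficients, $J_n(\theta)\ge J_n(0)=(n-1)!$ for every $\theta\ge 0$, so $\sum_{n\ge1}J_n(\theta)/n!\ge\sum_{n\ge1}1/n=\infty$ in one line, covering $\theta=0$, $(0,1)$, $\theta=1$, and $\theta>1$ simultaneously; no Stirling, no Cayley, no asymptotic rate is needed. (Your intermediate claim $J_n(\theta)/n!\sim\text{const}\cdot(1-\theta)^{-n}$ is also wrong as stated — the geometric ratio from \eqref{AsympJn} is $((1-\theta)z_\theta)^{-1}$, not $(1-\theta)^{-1}$, and $z_\theta\ne1$ for $\theta\ne 0$ — though you do correct yourself in the subsequent sentence.)
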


\begin{proof}
The only-if part is immediate since $J_{n}(\theta)\ge J_{n}(0) = (n-1)!$ for all $n\ge 1$ and $\theta\ge 0$. For the if part, we first observe, with the notation of Subsection \ref{Asyb}, that for all $z\in (0,1)$ and $\theta\in[-1,0)$ one has by \eqref{id:MR}
\begin{equation*}
    (\theta-1) \log E\bigg(\theta, -\frac{z}{1-\theta}\bigg)\; =\; \sum_{n\ge 1}\; J_{n}(\theta)\, \frac{z^{n}}{n!}\; <\; \sum_{n\ge 1}\; \frac{z^{n}}{n}\; <\; \infty, 
\end{equation*}
recalling the fact mentioned in Remark \ref{Conv}(b) that $(1-\theta)z_{\theta} \ge 1$ for each $\theta\in [-1,0)$. Fixing now any such $\theta$ and recalling further that $\vth\mapsto(1-\vth)z_{\vth}$ is nonincreasing on $[-1,\frac{1}{2}]$ with value $z_{0} = 1$ at 0 shows that $(1-\theta)z_{\theta}=1$ entails $(1-\vth)z_{\vth} = 1$ for all $\vth\in(\theta,0)$, which in turn implies the impossible fact that the analytic function
\begin{equation*}
    \vth\;\mapsto \; \sum_{n\ge 0} \frac{\vth^{n(n-1)/2}}{(\vth-1)^{n}} 
\end{equation*}
vanishes on $(0,-\theta)$. So we must have $(1-\theta)z_{\theta} > 1$ for all $\theta\in[-1,0)$ and, by picking some $x_{\theta}\in (1,(1-\theta)z_{\theta})$, we finally obtain
\begin{equation*}
   \sum_{n\ge 1}\; \frac{J_{n}(\theta)}{n!}\; <\;\sum_{n\ge 1}\; J_{n}(\theta)\, \frac{x_{\theta}^{n}}{n!}\; <\; \infty
\end{equation*} 
as required.\qed
\end{proof}

\begin{Rem}\label{Asympb}\rm
Recall from Proposition \ref{Asymp1} that $\lambda_{\theta}=2(1-\theta)z_{\theta}$, thus  nonincreasing on $[-1,1)$, and from \eqref{AsympJn} in Remark \ref{Conv}(a) that
\begin{equation*}
    \frac{J_{n+1}(\theta)}{n!}\; \sim\; \frac{1}{z_{\theta}}\lpa\frac{2}{\lambda_{\theta}}\rpa^{n}\qquad\text{as }n\to\infty. 
\end{equation*}
As a consequence of the previous lemma, we now infer $\lambda_{\theta} > 2$ for $\theta \in [-1,0)$ and $\lambda_{\theta} < 2$ for $\theta \in (0,1).$
\end{Rem}

Lemma \ref{Sum} also ensures that the positive measure
\begin{equation*}
   m_{\theta}^{}(du)\; =\; \sum_{n\ge 1}\; \frac{J_{n}(\theta +1)}{n!}\;\delta_{n}^{}(du)
\end{equation*}
is finite with total mass ${\ovl m}_{\theta}^{} = \theta\log(E(\theta + 1,1/\theta))$ for any $\theta\in [-2,-1)$. Considering now a compound Poisson process $\{X_{\theta}(t), \; t\ge 0\}$ on $\N$ with L\'evy measure $m_{\theta}^{}(du)$, the discrete L\'evy-Khintchine formula, see e.g.\ Theorem~II.3.2 in \cite{STVH}, combined with \eqref{ExpTutt} implies
\begin{equation*}
   \sum_{n\ge 0}\, e^{-t{\ovl m}_{\theta}^{}}T_{n}(t,\theta)\, \frac{z^{n}}{n!}\; =\;  \exp\lcr -t\int_{0}^{\infty} (1-z^u) m_{\theta}^{} (du)\rcr\; =\; \Erw\lcr z^{X_{\theta}(t)}\rcr
\end{equation*}
for every $z\in (0,1)$. By comparison of coefficients, this leads to
\begin{equation*}
   \Prob\lcr X_{\theta}(t) = n\rcr\; =\; e^{-t{\ovl m}_{\theta}^{}}\, \frac{T_{n}(t,\theta)}{n!}
\end{equation*}
for all $t\ge 0$, $\theta\in[-2,1)$ and $n\ge 0$, and thus to a probabilistic representation of the Tutte polynomial. In the limiting case $\theta = -2$, ${\ovl m}_{-2}^{} = -\log(1-\sin 1)$ and the compound Poisson process has explicit moment generating function  
\begin{equation*}
   \Erw\lcr z^{X_{\!-\! 2}(t)}\rcr\; =\; \lpa\frac{1-\sin 1}{1 -\sin z}\rpa^t
\end{equation*}
for all $t\ge 0$ and $z\in (0,\pi/2).$
 
\subsection{Infinite divisibility}
\label{ID}

This subsection is devoted to aspects of infinite divisibility (ID) in connection with the persistence probabilities $p_{n}(\theta)$, and it begins with a discussion of the shifted first passage time below zero $\wtil{T}_{\theta}=T_{\theta}-1$, related to the $p_{n}(\theta)$ by
\begin{equation*}
   \Prob[\wtil{T}_{\theta}=n]\ =\ p_{n}(\theta)-p_{n+1}(\theta)\quad\text{for all }n\ge 0. 
\end{equation*}
Unlike $T_{\theta}$, it qualifies at all to have a discrete ID law by taking values in $\N_{0}$ rather than $\N$ only.

\vspace{.1cm}
For a downward skip-free Markov chain on $\Z$, it is easy to see by right continuity that its shifted passage time below zero is indeed always ID, but already replacing the state space with the whole real line makes the problem less immediate because of the jumps. Back to the model \eqref{eq:AR(1)} studied in this work and assuming $\theta=0$ (white-noise case), the random variable $\wtil{T}_{0}$ is geometric and hence ID regardless of the innovation law, see e.g.~Example II.2.6 in \cite{STVH}. If $\theta = 1$ (random walk case) and the innovation law is symmetric and continuous, then the Sparre Andersen formula \eqref{Sparre} provides
\begin{equation*}
   p_{n}(1)\; =\; \frac{1}{4^{n}}\,\binom{2n}{n}, 
\end{equation*}
and thus $\Prob [\wtil{T}_{\theta} = n] = p_{n}(1)-p_{n+1}(1)=C_{n}/2^{2n+1}$, where $\{C_{n},\, n\ge 0\}$ denotes the sequence of Catalan numbers. Their classical integral representation leads to
\begin{equation*}
   \Prob [\wtil{T}_{\theta} = n]\; =\; \frac{1}{\pi}\int_{0}^1 x^{n}\, \sqrt{\frac{1-x}{x}}\, dx 
\end{equation*} 
for all $n\ge 0$ and shows that $\{\Prob [\wtil{T}_{\theta} = n], \, n\ge 0\}$ is a sequence of positive moments and therefore log-convex. It follows by the Goldie-Steutel criterion, see e.g.~Theorem II.10.1 in \cite{STVH}, that $\wtil{T}_{\theta}$ is also ID.

\vspace{.2cm}
In the true Markovian situation $\theta\in(0,\frac{1}{2}]$ with uniform innovation law on $[-1,1]$, the moment sequence argument still applies as established by the following result.

\begin{Prop}\label{ID1}
The random variable $\wtil{T}_{\theta}^{\,U}$ is {\rm ID} for any $\theta \in[0,\frac{1}{2}].$
\end{Prop}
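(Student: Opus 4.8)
The plan is to reproduce, for $\theta\in(0,\frac12]$, the moment-sequence argument already used above in the cases $\theta\in\{0,1\}$: I will exhibit $\{\Prob[\wtil{T}_{\theta}^{\,U}=n]\}_{n\ge0}$ as a mixture of geometric laws on $\N_{0}$ and then invoke the Goldie--Steutel criterion. The value $\theta=0$ is immediate, since $p_{n}^{U}(0)=2^{-n}$ gives $\Prob[\wtil{T}_{0}^{\,U}=n]=2^{-(n+1)}$, a genuine geometric distribution.

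So fix $\theta\in(0,\frac12]\subset(0,1)$. The proof of Proposition~\ref{Asymp1} shows, via the Hadamard factorisation of the order-zero entire function $E(\theta,\cdot)$ together with Laguerre's criterion, that the roots $a_{k}(\theta)$ of $z\mapsto E(\theta,-z)$ are all simple and positive, enumerated so that $0<a_{1}(\theta)\le a_{2}(\theta)\le\cdots$; in particular $a_{1}(\theta)=z_{\theta}$. Feeding the convergent expansion \eqref{SerRep} into the identity $p_{n}^{U}(\theta)=J_{n+1}(\theta)/(2^{n}n!)$ of Theorem~\ref{thm:main_1} gives, for every $n\ge0$,
\begin{equation*}
p_{n}^{U}(\theta)\;=\;\sum_{k\ge1}c_{k}\,b_{k}^{\,n},\qquad c_{k}:=\frac{1}{a_{k}(\theta)}>0,\quad b_{k}:=\frac{1}{2(1-\theta)a_{k}(\theta)}.
\end{equation*}
I then record two elementary facts: $\sum_{k\ge1}c_{k}=p_{0}^{U}(\theta)=1$, so $(c_{k})_{k\ge1}$ is a probability weight sequence, and $b_{k}\le b_{1}=1/\lambda_{\theta}<1$ for all $k$, because $\lambda_{\theta}=2(1-\theta)z_{\theta}=2(1-\theta)a_{1}(\theta)>1$ by Proposition~\ref{Asymp1}; hence every $b_{k}\in(0,1)$. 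Subtracting successive terms of the expansion yields
\begin{equation*}
\Prob[\wtil{T}_{\theta}^{\,U}=n]\;=\;p_{n}^{U}(\theta)-p_{n+1}^{U}(\theta)\;=\;\sum_{k\ge1}c_{k}(1-b_{k})\,b_{k}^{\,n}\;=\;\int_{0}^{1}(1-x)\,x^{n}\,\nu_{\theta}(dx),
\end{equation*}
with $\nu_{\theta}:=\sum_{k\ge1}c_{k}\,\delta_{b_{k}}$ a probability measure on $(0,1)$. This presents $\wtil{T}_{\theta}^{\,U}$ as a mixture of geometric distributions on $\N_{0}$, equivalently $\Prob[\wtil{T}_{\theta}^{\,U}=n]=\int_{0}^{1}x^{n}\,\mu_{\theta}(dx)$ for the finite positive measure $\mu_{\theta}(dx)=(1-x)\nu_{\theta}(dx)$. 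By the Cauchy--Schwarz inequality this moment sequence is log-convex, and since $\Prob[\wtil{T}_{\theta}^{\,U}=0]=\frac12>0$, the Goldie--Steutel criterion (Theorem~II.10.1 in~\cite{STVH}) lets me conclude that $\wtil{T}_{\theta}^{\,U}$ is {\rm ID}.

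I do not anticipate a serious obstacle here: the argument only assembles facts already established in Subsection~\ref{Asyb} --- the expansion \eqref{SerRep}, the simplicity and positivity of the $a_{k}(\theta)$ for $\theta\in(0,1)$, and the inequality $\lambda_{\theta}>1$. The points that need a little care are checking $b_{k}\in(0,1)$ for every $k$ (so that the $k$-th summand is a geometric law weighted by $c_{k}$) and $\sum_{k}c_{k}=1$ (so that $\nu_{\theta}$ is a probability measure and the mixture is proper), both of which follow from $p_{0}^{U}(\theta)=1$ and Proposition~\ref{Asymp1}; and one must not forget to treat the degenerate, single-root case $\theta=0$ separately.
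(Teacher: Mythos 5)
Your proof is essentially identical to the paper's: both rewrite $\Prob[\wT_{\theta}^{\,U}=n]$ via the spectral expansion \eqref{SerRep} as a Hausdorff moment sequence $\int_0^1 x^n\,\nu_\theta(dx)$ with $\nu_\theta$ a finite positive measure on $(0,1)$, deduce log-convexity, and invoke Goldie--Steutel. The only cosmetic differences are that you spell out the mixture-of-geometrics reading and handle $\theta=0$ separately, whereas the paper absorbs $\theta=0$ into the general formula (since $E(0,z)=1+z$ makes the sum in \eqref{SerRep} reduce to a single term).
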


\begin{proof}
For $\theta$ as stated, we infer from \eqref{eq:MR-id} and \eqref{SerRep} that 
\begin{equation*}
   p_{n}^{U}(\theta) \; =\; \sum_{k\ge 1}\; \frac{1}{a_{k}(\theta)\lambda_{k}(\theta)^{n}}, 
\end{equation*}
where $\lambda_{k}(\theta) = 2(1-\theta)a_{k}(\theta)$ is an increasing sequence with $\lambda_{1}(\theta) > 1$. This yields
\begin{equation*}
   \Prob [\wT_{\theta}^{\,U} = n]\; =\;p_{n}^{U}(\theta)\, -\, p_{n+1}^{\,U}(\theta)\; =\; \sum_{k\ge 1}\; \frac{\lambda_{k}(\theta) -1}{a_{k}(\theta)\, (\lambda_{k}(\theta))^{n+1}}\; =\; \int_{0}^1\! x^{n}\,\nu_{\theta}(dx), 
\end{equation*}
with
\begin{equation*}
   \nu^{}_{\theta}(dx)\; =\; \sum_{k\ge 1}\; \lpa\frac{\lambda_{k}(\theta)-1}{a_{k}(\theta)\, \lambda_{k}(\theta)}\rpa \delta_{1/\lambda_{k}(\theta)}(dx), 
\end{equation*}
and since $\nu_{\theta}$ is a positive measure on $(0,1)$, we arrive at the desired conclusion as in the case $\theta = 1$.\qed
\end{proof}

\begin{Rem}\rm
(a) The moment sequence representation argument to show log-convexity does no longer work if $\theta =-1$. Indeed, by using Euler's summation for the cotangent, we have 
\begin{align*}
\sum_{n\ge 0}p_{n}^{U}(-1)z^{n}\, &=\, \frac{1 + \sin(z/2)}{\cos(z/2)}\,=\, \cot\bigg(\frac{\pi-z}{4}\bigg)\, =\, \lim_{n\to\infty}\sum_{\vert j\vert \le n}\frac{4}{\pi(4j+1) -z},
\end{align*}
which after some simple transformations leads to
\begin{equation*}
   \Prob[\wT_{\theta}^{\,U} = n]\; =\; \int_{-1}^1 x^{n}\,\nu(dx) 
\end{equation*}
with $\lambda_j = 1/(\pi(4j+1))$ for all $j\in\Z$ and
\begin{equation*}
   \nu(dx)\,=\,  4(1-x) \sum_{j\in\Z}\lambda_j\,\delta_{\lambda_j} (dx). 
\end{equation*}
But the latter is obviously only a signed measure on $(-1,1)$. 

\vspace{.1cm}
(b) If $\theta\ge 2$ and thus $\Prob[\wT_{\theta}^{\,U}=\infty]>0$ (defective case), the ID of $\wT_{\theta}^{\,U}$ remains an open problem. Yet, we remark that
\begin{equation*}
   \wJ_{n+1}(1/\theta)\,\wJ_{n-1}(1/\theta)\;\ge \; (1+1/n)\,\wJ_{n}(1/\theta)^{2}\quad\text{for all }n\ge 2 
\end{equation*}
provides a sufficient condition for the log-convexity of $\{\Prob[\wT_{\theta}^{\,U}=n],\,n\ge 1\}$ and thus the ID of $\wT_{\theta}^{\,U}$, as can be shown with the help of Proposition \ref{Hit}. Another line of attack might be the Wiener-Hopf type factorization
\begin{equation*}
    \lpa 1-\Erw\lcr z^{\wT^{\,U}_{1/\theta}}\rcr\rpa\lpa 1-\Erw\lcr z^{\wT_{\theta}^{\,U}}\,\1_{\{\wT_{\theta}^{\,U} < \infty\}}\rcr\rpa\; =\; 1-z, 
\end{equation*}
valid for all $z\in (0,1)$ and a straightforward consequence of Theorem~\ref{thm:main_3}. In support of this, we recall that for L\'evy processes, the Wiener-Hopf factors are indeed ID random variables. Finally, we point out that, because of the negative signs, the recursive formula \eqref{Jrec} does not give, at least not directly, the canonical representation of a discrete ID distribution as stated in Theorem~II.4.4. of \cite{STVH}.

\vspace{.1cm}
(c) The argument given in the above proof of Proposition \ref{ID1} amounts to a total positivity property for Mallows-Riordan polynomials that is mentioned in \cite{Sokal14}. To explain, recall that
\begin{equation*}
   \frac{J_{n+1}(\theta)}{n!} \; =\; \sum_{k\ge 1}\; \frac{1}{a_{k}(\theta)b_{k}(\theta)^{n}}\ =\ \int_{0}^{\infty}\! x^{n}\,\wtil{\nu}_{\theta}(dx) 
\end{equation*}
for all $\theta \in (0,1)$, where the $b_{k}(\theta):=(1-\theta)a_{k}(\theta)=\lambda_{k}(\theta)/2$ are positive and increasing numbers, and 
\begin{equation*}
   \wtil{\nu}_{\theta}(dx)\ =\ \sum_{k\ge 1}\; \frac{1}{a_{k}(\theta)}\, \delta_{1/b_{k}(\theta)} (dx)
\end{equation*}
a positive measure on $(0,\infty)$. By Stieltjes' criterion, this entails the total positivity of the Hankel matrix
\begin{equation*}
   \lcr\frac{J_{i+j+1}(\theta)}{(i+j)!}\rcr_{i,j\ge 0} 
\end{equation*}
for any $\theta\in(0,1)$, which means that all minors of this matrix are non-negative. Now it has been conjectured in \cite{Sokal14} that this Hankel matrix is even coefficientwise totally positive, that is, all minors are polynomials with nonnegative coefficients. But even the assertion that the polynomial
\begin{equation*}
    J_{n+1}(\theta)\, J_{n-1}(\theta)\, -\, (1+1/n) \, (J_n(\theta))^2
\end{equation*}
has only nonnegative coefficients for all $n\ge 1,$ which provides to the coefficientwise log-convexity of the sequence $\{J_{n+1}(\theta)/n!,\, n\ge 0\}$, remains an open problem.
\end{Rem}

Propositions \ref{Asymp1}, \ref{Asymp2} and \ref{Hout} have shown that $\Erw[T_{\theta}^{\,U}] < \infty$ if and only if  $\theta < 1$. In this case, the random variable $\hT_{\theta}^{\,U}$ with law
\begin{equation*}
    \Prob[\hT_{\theta}^{\,U} = n]\; =\; \frac{\Prob[T_{\theta}^{\,U} > n]}{\Erw[T_{\theta}^{\,U}]} 
\end{equation*}
for all $n\ge 0$ can be considered. This law is called the size-biasing of the law of $T_{\theta}^{U}$ and appears, for instance, in renewal theory. Regarding infinite divisibility, we have the following result.

\begin{Prop}\label{ID2}
The random variable $\hT_{\theta}^{\,U}$ is {\rm ID} for $\theta \in[-1,\frac{1}{2}]$ and fails to be so for $\theta < -1.$
\end{Prop}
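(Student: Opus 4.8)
The plan is to pass to generating functions and to invoke the classical fact that a probability law on $\N_0$ with a positive atom at $0$ is infinitely divisible if and only if it is compound Poisson, i.e.\ its probability generating function has the form $\exp\bigl(\sum_{k\ge1}\lambda_k(z^{k}-1)\bigr)$ with $\lambda_k\ge0$ and $\sum_k\lambda_k<\infty$; see e.g.\ Theorem~II.4.4 in \cite{STVH}. Here the probability generating function of $\hT_\theta^U$ is $g_\theta(z)/g_\theta(1)$, where $g_\theta(z)=\sum_{n\ge0}p_n^U(\theta)z^{n}$ and $g_\theta(1)=\Erw[T_\theta^U]$, which is finite for $\theta\in[-1,\tfrac12]$ by Proposition~\ref{Asymp1} and for $\theta<-1$ by Proposition~\ref{Asymp2}, so that $\hT_\theta^U$ is well defined in both regimes. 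Thus everything reduces to determining the sign of the $z^{n}$-coefficient of $\log g_\theta(z)$.

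For $\theta\in[-1,\tfrac12]$ this is transparent. By Theorem~\ref{thm:main_1}, $g_\theta(z)=\sum_{n\ge0}J_{n+1}(\theta)(z/2)^{n}/n!$, so the substitution $z\mapsto z/2$ in the exponential identity \eqref{id:Mathar} gives at once
\begin{equation*}
\log g_\theta(z)\;=\;\sum_{n\ge1}\frac{(1+\theta+\cdots+\theta^{n-1})\,J_n(\theta)}{2^{n}\,n!}\,z^{n}.
\end{equation*}
Every coefficient here is nonnegative: $J_n(\theta)>0$ on $[-1,\infty)$ by Remark~\ref{Inc1}, while $1+\theta+\cdots+\theta^{n-1}=(1-\theta^{n})/(1-\theta)\ge0$ for $\theta\in[-1,1)$ because $\theta^{n}\in[-1,1]$. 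Subtracting the constant $\log g_\theta(1)$ then exhibits $\log\bigl(g_\theta(z)/g_\theta(1)\bigr)$ in the compound Poisson form, so $\hT_\theta^U$ is infinitely divisible.

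For $\theta<-1$ I would instead show that already the simplest necessary condition for infinite divisibility fails. By Corollary~\ref{cor:Thm 2} and the tabulated values $\wJ_2\equiv1$, $\wJ_3(\vartheta)=-\vartheta$, one has $p_0^U(\theta)=1$, $p_1^U(\theta)=\tfrac12$ and $p_2^U(\theta)=-\tfrac1{8\theta}>0$. Writing $q_n=p_n^U(\theta)/\Erw[T_\theta^U]$, the canonical recursion $nq_n=\sum_{k=1}^{n}r_kq_{n-k}$ valid — with nonnegative $r_k$ — for every infinitely divisible law on $\N_0$ with $q_0>0$ gives $r_2=2q_2/q_0-q_1^{2}/q_0^{2}$, so that infinite divisibility would force $q_1^{2}\le2q_0q_2$, i.e.\ $\tfrac14\le-\tfrac1{4\theta}$; this is false since $-1/\theta\in(0,1)$ for $\theta<-1$. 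Hence $\hT_\theta^U$ is not infinitely divisible. (Consistently, feeding Corollary~\ref{cor:Thm 2} into \eqref{expwJ} yields $\log g_\theta(z)=\sum_{n\ge1}(-1)^{n-1}(1+\vartheta+\cdots+\vartheta^{n-1})J_n(\vartheta)z^{n}/(2^{n}n!)$ with $\vartheta=1/\theta\in(-1,0)$, and since $J_n(\vartheta)>0$ and $1+\vartheta+\cdots+\vartheta^{n-1}>0$ these coefficients alternate in sign — negative for every even $n$.)

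Given Theorems~\ref{thm:main_1} and~\ref{thm:main_2} together with their corollaries, the argument is essentially a bookkeeping exercise, and the only mildly delicate point is the analytic justification, in the first regime, that the formal identity $g_\theta(z)=\exp\bigl(\sum_{n\ge1}m_nz^{n}\bigr)$ with $m_n\ge0$ really produces a \emph{finite} canonical measure, i.e.\ that $\sum_{n\ge1}m_n=\log g_\theta(1)<\infty$. This I would obtain from coefficientwise domination of $\exp\bigl(\sum_{n\le N}m_nz^{n}\bigr)$ by $g_\theta(z)$ for each $N$ (all coefficients being nonnegative), giving $\exp\bigl(\sum_{n\le N}m_n\bigr)\le g_\theta(1)$ for all $N$, and then matching the two sides at $z=1$ via Abel's theorem; I do not anticipate any genuine obstruction beyond this.
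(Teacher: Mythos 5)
Your proof is correct and follows the same underlying mechanism as the paper's, namely passing $\log$ through the generating function via the exponential identity \eqref{id:Mathar} and reading off the sign of the canonical measure; the differences are in how the cases are organized. For the ID half, the paper actually splits $[-1,\tfrac12]$ at $0$: on $[0,\tfrac12]$ it cites the log-convexity of $\{\Prob[\wT_\theta^U=n]\}$ established in the proof of Proposition~\ref{ID1} together with \cite[Proposition II.10.7]{STVH} (size-biasing preserves discrete ID under log-convexity), and only on $[-1,0)$ does it use the exponential formula to exhibit the Lévy measure explicitly. You instead apply the exponential-formula argument uniformly across $[-1,\tfrac12]$, observing that $J_n(\theta)>0$ on $[-1,\infty)$ (Remark~\ref{Inc1}) and $1+\theta+\cdots+\theta^{n-1}\ge0$ on $[-1,1)$ hold simultaneously; this is a genuine, if modest, streamlining, as it removes the dependence on Proposition~\ref{ID1} and the log-convexity lemma. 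For the non-ID half your argument is also a concretization of the paper's: the paper displays, via \eqref{expwJ}, the same expansion $\log g_\theta(z)=\sum_{n\ge1}(-1)^{n-1}(1+\vartheta+\cdots+\vartheta^{n-1})J_n(\vartheta)\,z^n/(2^n n!)$ with $\vartheta=1/\theta$ and concludes that the would-be Lévy coefficients alternate in sign; you pin this down at $n=2$ and convert it into the explicit failure of the necessary condition $2p_0^Up_2^U\ge(p_1^U)^2$, using $\wJ_3(\vartheta)=-\vartheta$. The two checks are equivalent (the coefficient of $z^2$ equals $\lambda_2=r_2/2=-(1+\vartheta)/8<0$), but yours is more self-contained and does not require the reader to verify sign alternation for general $n$. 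The final paragraph's worry about justifying finiteness of the canonical measure is easily dispatched exactly as you suggest: for $\theta\in[-1,\tfrac12]$ Proposition~\ref{Asymp1} gives $g_\theta$ radius of convergence $\lambda_\theta>1$, so monotone convergence in the nonnegative series $\sum m_n z^n$ as $z\uparrow 1$ identifies $\sum m_n=\log g_\theta(1)=\log\Erw[T_\theta^U]<\infty$.
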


\begin{proof}
If $\theta\in [0,\frac{1}{2}]$, the result follows from the log-convexity of the sequence $\{\Prob[\wT_{\theta}^{U}=n],\,n\ge 1\}$ shown in the proof of Proposition \ref{ID1} combined with  \cite[Proposition II.10.7]{STVH}, which then ensures the same property for the corresponding size-biased probabilities $\Prob[\hT_{\theta}^{\,U}=n]$. 

If $\theta\in [-1,0)$, we use Theorem \ref{thm:main_1} and the exponential formula \eqref{id:Mathar}, giving
\begin{align*}
\sum_{n\ge 0} \Prob[\hT_{\theta}^{\,U} = n]\, z^{n}\; &=\; \frac{1}{\Erw[T_{\theta}^{\,U}]}\, \sum_{n\ge 0} J_{n+1}(\theta)\,\frac{z^{n}}{2^{n}\, n!}\\
&=\; \exp\Bigg[-\sum_{n\ge 1} \frac{J_{n}(\theta)\, (1+\cdots + \theta^{n-1})}{2^{n}\, n!}\, (1-z^{n})\Bigg]
\end{align*}
for all $z\in (0,1]$. Now the discrete L\'evy-Khintchine formula and the fact that $J_{n}(\theta)\, (1+\cdots + \theta^{n-1})\,\ge\, 0$ for all $n\ge 1$ and $\theta\in[-1,0)$ imply that $\hT_{\theta}^{\,U}$ is ID. 

Finally, if $\theta < -1$, we note as a direct consequence of \eqref{eq:main_theorem_3} and \eqref{expwJ} that 
\begin{align*}
\sum_{n\ge 0}\, \Prob[\hT_{\theta}^{\,U}& = n]\, z^{n}  \\ & =\,\exp\Bigg[-\sum_{n\ge 1} \frac{(-1)^{n-1}J_{n}(1/\theta)\, (1+\cdots + \theta^{1-n})}{2^{n}\,n!}\, (1-z^{n})\Bigg],
\end{align*}
and this shows that $\hT_{\theta}^{\,U}$ cannot be ID because $(-1)^{n-1} J_{n}(1/\theta)\, (1+\cdots + \theta^{1-n})$ takes negative values.\qed
\end{proof}

\begin{Rem}\rm
(a) If $\theta < -1$ the argument just given has shown that the sequence $\{ \Prob[\hT_{\theta}^{\,U} = n],\, n\ge 0\}$ is not log-convex and so, again by \cite[Proposition II.10.7]{STVH}, that the sequence $\{ \Prob[T_{\theta}^{\,U} = n],\, n\ge 0\}$ is not log-convex either.\\

\vspace{.1cm}
(b) If $\theta =-1$, one has $J_{n}(-1)\, (1+\cdots + (-1)^{1-n}) = B_{n-1}\ge 0$, where $B_{n}$ is Euler's $n$-th secant or ``zig'' number, and so we retrieve the well-known formula
\begin{equation*}
   \sum_{n\ge 0}\, A_{n}\, \frac{z^{n}}{n!} \; =\;  \exp\Bigg[\, \sum_{n\ge 1}B_{n-1}\,\frac{z^{n}}{n!}\Bigg] \; =\;  \exp\Bigg[ \, \sum_{n\ge 0} A_{2n}\,\frac{z^{2n+1}}{(2n+1)!}\Bigg], 
\end{equation*}
which (see e.g.\ (1.2) in \cite{Stanley10}) amounts to
$$\int_0^x \frac{dt}{\cos t} \; =\; \log \lpa \frac{1 + \sin x}{\cos x}\rpa, \qquad x \in (0,\pi/2).$$
\end{Rem}

\subsection{Uniform innovations on non-symmetric intervals}
\label{NonSym}

Finally, we want to briefly discuss how some of our results can be extended to the case when the innovation law is uniform on $[-a,b]$ for arbitrary $a,b > 0$. Let $p_{n}^{a,b}(\theta)$ denote the corresponding persistence probability.

\begin{Prop}\label{Last}
For every $a,b > 0$ and $n\ge 0$, one has
\begin{align*}
p_{n}^{a,b}(\theta)\; =\; 
\begin{cases}
\displaystyle\lpa\frac{b}{a+b}\rpa^{\! n}\frac{J_{n+1}(\theta)}{n!}&\displaystyle\text{if }\theta\in\left[-1,\frac{a}{a+b}\right],\\[3mm]
\displaystyle\lpa\frac{b}{a+b}\rpa^{\! n}\frac{\wJ_{n+1}(1/\theta)}{n!}&\text{if }\theta\in(-\infty,-1].
\end{cases}
\end{align*}
\end{Prop}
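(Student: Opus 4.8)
The plan is to reduce the statement to the symmetric case already settled in Theorem~\ref{thm:main_1}, Theorem~\ref{thm:main_2} and Corollary~\ref{cor:Thm 2}. Write $\beta=b/(a+b)$, so $\beta=\Prob[X_{1}\ge0]$ and $1-\beta=a/(a+b)$. For the first range $\theta\in[-1,a/(a+b)]$ I start from
\begin{equation*}
p_{n}^{a,b}(\theta)\;=\;\frac{1}{(a+b)^{n}}\,{\rm vol}\,\bigl\{(x_{1},\ldots,x_{n})\in[-a,b]^{n}:Y_{i}\ge0,\ i=1,\ldots,n\bigr\},
\end{equation*}
with $Y_{i}=\theta Y_{i-1}+x_{i}$ and $Y_{0}=0$, and pass to the unimodular coordinates $Y_{1},\ldots,Y_{n}$; the domain becomes $\{Y_{i}\ge0,\ \theta Y_{i-1}-a\le Y_{i}\le b+\theta Y_{i-1}\}$. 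The key observation is that on this set the lower constraint $Y_{i}\ge\theta Y_{i-1}-a$ is never active once $\theta\le a/(a+b)$: for $\theta\in[-1,0)$ one has $\theta Y_{i-1}\le0<a$, while for $\theta\in[0,a/(a+b)]$ the upper constraints force $Y_{i-1}\le b(1+\theta+\cdots+\theta^{i-2})<b/(1-\theta)$, hence $\theta Y_{i-1}<\theta b/(1-\theta)\le a$, the last step being exactly $\theta(a+b)\le a$. Dropping that constraint and rescaling $Y_{i}=bW_{i}$ yields
\begin{equation*}
p_{n}^{a,b}(\theta)\;=\;\beta^{n}\int_{0}^{1}\!\int_{0}^{1+\theta W_{1}}\!\!\cdots\int_{0}^{1+\theta W_{n-1}}dW_{1}\cdots dW_{n}.
\end{equation*}

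The remaining integral equals $\tp_{n}^{\,U}(\theta)=J_{n+1}(\theta)/n!$ for $\theta\in[-1,\frac{1}{2}]$ by the polytope-volume proof of Theorem~\ref{thm:main_1} (combined with \eqref{eq:MR-id}). For $\theta\in(\frac{1}{2},a/(a+b)]$ — a range that is nonempty only when $a>b$ — I extend this by polynomiality: for $\theta\ge-1$ every upper limit $1+\theta W_{i}$ encountered during the iterated integration stays nonnegative (trivially if $\theta\ge0$, and because $W_{i}\le1$ if $\theta\in[-1,0)$), so the left-hand side is a genuine polynomial in $\theta$ on $[-1,\infty)$; it agrees with the polynomial $J_{n+1}(\theta)/n!$ on the interval $[-1,\frac{1}{2}]$ and hence on all of $[-1,\infty)$. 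This settles the first case.

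For $\theta\le-1$ I use Theorem~\ref{thm:main_2}, which applies since the uniform law on $[-a,b]$ has no atoms, giving $\sum_{k=0}^{n}(-1)^{k}p_{k}^{a,b}(\theta)\,p_{n-k}^{a,b}(1/\theta)=0$ for all $n\ge1$ while the $n=0$ term equals $1$. Since $1/\theta\in[-1,0)\subseteq[-1,a/(a+b)]$, the first case yields $p_{m}^{a,b}(1/\theta)=\beta^{m}J_{m+1}(1/\theta)/m!$, whence, as an identity of formal power series,
\begin{equation*}
\Bigl(\sum_{k\ge0}(-1)^{k}p_{k}^{a,b}(\theta)\,z^{k}\Bigr)\Bigl(\sum_{m\ge0}J_{m+1}(1/\theta)\,\frac{(\beta z)^{m}}{m!}\Bigr)\;=\;1.
\end{equation*}
By \eqref{eq:modified_MR} one has $\bigl(\sum_{m\ge0}J_{m+1}(1/\theta)\,w^{m}/m!\bigr)^{-1}=\sum_{m\ge0}(-1)^{m}\wJ_{m+1}(1/\theta)\,w^{m}/m!$, so taking $w=\beta z$ and comparing coefficients of $z^{k}$ gives $p_{k}^{a,b}(\theta)=\beta^{k}\wJ_{k+1}(1/\theta)/k!$, which is the second case.

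The main obstacle is the redundancy argument in the first paragraph, i.e.\ determining exactly for which $\theta$ the lower limits in the defining integral are inactive (so that $p_{n}^{a,b}(\theta)$ is the untruncated integral rather than one of the more complicated truncated expressions of Remark~\ref{MRGenLin}); once that is done, together with the polynomial identification of the untruncated integral, the remainder — including the entire second case — is routine bookkeeping and a generating-function manipulation.
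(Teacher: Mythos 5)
Your proof is correct, and its first case is essentially the same as the paper's argument: both reduce to the symmetric (or $b=1$) case, drop the inactive lower constraint on the interval $[-1,a/(a+b)]$, and identify the resulting untruncated iterated integral with $J_{n+1}(\theta)/n!$. You make the polynomiality extension to $(\tfrac12,a/(a+b)]$ explicit, which is a point the paper leaves implicit (the untruncated integral is a polynomial in $\theta$, equal to $J_{n+1}(\theta)/n!$ by Proposition~\ref{prop:MR_volume}, regardless of the interval on which it happens to equal $\tp_n^{\,U}(\theta)$). Where you diverge is the second case $\theta\le -1$: you route through Theorem~\ref{thm:main_2} and the generating-function inversion \eqref{eq:modified_MR}, in effect re-deriving the analogue of Corollary~\ref{cor:Thm 2} for the asymmetric law. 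The paper instead observes directly that, because $\theta<0$ forces all the lower integration bounds to be nonnegative, the whole domain of integration sits inside $[0,b]^n$ and never touches $-a$; hence $(a+b)^n p_n^{a,b}(\theta)$ is \emph{independent of $a$}, and one can simply evaluate it at $a=b$ via Corollary~\ref{cor:Thm 2}. The paper's version is shorter and avoids the duality theorem altogether, but yours has the merit of exposing that the full duality \eqref{eq:duality theta<0} survives unchanged in the asymmetric uniform model and that the factorization closes on the polynomials $\wJ_{n+1}$ just as in the symmetric case.
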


\begin{proof}
Given uniform innovations on $[-a,b]$ in \eqref{eq:AR(1)}, it is no loss of generality to assume $b=1$, for otherwise this holds for the innovations $\tX_{n} = b^{-1}X_{n}$ upon multiplication of \eqref{eq:AR(1)} by $b^{-1}$ and $p_{n}^{a,b}(\theta) = \Prob[\tY_{1} > 0,\ldots, \tY_{n} > 0]$.
Recalling the discussion prior to \eqref{eq:why_theta<=1/2}, it is easy to see that in the present situation
\begin{align*}
p_{n}^{a,1}(\theta)\ &=\ \frac{1}{(a+1)^{n}}\int_{0}^{1}\!\int_{-\theta u_{1}}^{1}\!\!\!\cdots\int_{-(\theta u_{n-1}+\cdots+\theta^{n-1}u_{1})}^{1}du_{n}\ldots du_{2}\,du_{1}\\
&=\ \frac{J_{n+1}(\theta)}{(a+1)^{n}\,n!}
\end{align*}
for any $\theta \ge -1$ such that $\theta +\cdots + \theta^{n-1}\le a$. But the latter holds for all $n\ge 0$ if $\theta\in [-1,a/(a+1)]$. If $\theta<-1$, we observe that
\begin{align*}
p_{n}^{a,1}(\theta)\ &=\ \frac{1}{(a+1)^{n}}\int_{0}^{1}\!\int_{(-\theta u_{1})\wedge 1}^{1}\!\!\!\cdots\int_{(-\theta u_{n-1}-\cdots-\theta^{n-1}u_{1})\wedge 1}^{1}du_{n}\ldots du_{2}\,du_{1}
\end{align*}
because each of the lower integration bounds are $\ge 0$ and thus independent of $a$. In other words, $a\mapsto (a+1)^{n} p_{n}^{a,1}(\theta)$ is constant for each $n\ge 1$, and the constant equals $2^{n}p_{n}^{1,1}(\theta)=\wJ_{n+1}(1/\theta)/n!$ by Corollary \ref{cor:Thm 2}.\qed
\end{proof}

\begin{Rem}\label{LastR}\rm
(a) Proposition \ref{Last} covers the comfortable cases when $(a+1)^{n}$ $p_{n}^{a,1}(\theta)=2^{n}p_{n}^{1,1}(\theta)$ so that we can give closed-form expressions by resorting to our results for symmetric uniform innovations. For $\theta \ge 1+1/a$, this comfortable situation does no longer occur whence a closed-form expression cannot be derived from Theorem \ref{thm:main_3} and its corollary. In the random walk case $\theta = 1$, the same disclaimer applies whenever $a\ne 1$. Finally, as a consequence of the previous result combined with what has been pointed out in Remarks~\ref{Inc1} and \ref{Inc3}, we note that $\theta\mapsto p_{n}^{a,b}(\theta)$ is non-decreasing on $\R$ for all $a,b>0$ and $n\ge 0$.

\vspace{.1cm}
(b) Regarding asymptotic behavior, it follows from Propositions \ref{Asymp1} and \ref{Asymp2} that
\begin{align*}
p_{n}^{a,b}(\theta)\; \sim\; 
\begin{cases}
\displaystyle\frac{1}{z_{\theta}} \lpa\frac{b}{(a+b)(1-\theta)\, z_{\theta}}\rpa^{n}&\displaystyle\text{if }\theta\in\left[-1,\frac{a}{a+b}\right],\\[3mm]
\displaystyle\frac{1}{c_{\theta}} \lpa\frac{b}{(a+b)(1-\theta)\, z_{1/\theta}}\rpa^{\! n}&\text{if }\theta\in(-\infty,-1],
\end{cases}
\end{align*}
as $n\to\infty$, where $z_{\theta} = \inf\{z > 0:\; E(\theta,-z) =0\}$ and $c_{\theta}$ denotes a positive constant. The first asymptotics extends Proposition 3.1 in \cite{AMZ21} dealing with the zigzag case $\theta = -1$. From Proposition \ref{ID1}, we can also deduce that the shifted first passage time $\wT^{\,a,b}_{\theta}$, with obvious meaning, is ID for all $\theta\in [0, a/(a+b)]$.

\vspace{.1cm}
(c) For any $a,b >0$ and $\theta\in\R$, the truncated Volterra endomorphism $K$ on $\cC_b(\R^+,\R)$, defined by
\begin{equation*}
    K\psi(x)\; =\; \frac{1}{a+b}\, \int_{-a}^b \psi (y + \theta x) \, \1_{\{y+\theta x \ge 0\}}\, dy,\qquad x \ge 0 
\end{equation*}
is totally bounded and equicontinuous and hence compact by the Arzel\`a-Ascoli theorem. It follows from Theorem 2.1. in \cite{AMZ21} and the above asymptotics that its largest eigenvalue equals
\begin{gather*}
\frac{b}{(a+b)(1-\theta)\,z_{\theta}}\, <\, 1\qquad\text{if }\theta\in\left[-1,\frac{a}{a+b}\right]
\shortintertext{and}
\frac{b}{(a+b)(1-\theta)\, z_{1/\theta}}\, <\, 1\qquad\text{if }\theta\in \theta\in(-\infty,-1].
\end{gather*}
If $\theta = 0$, the largest eigenvalue equals $b/(a+b)$ and the corresponding eigenvectors are the constant functions. If $\theta = -1$, the largest eigenvalue is $2\pi b/(a+b)$ and the corresponding eigenvectors are the constant multiples of $\cos(\pi x/2b)\1_{[0,b]}(x)$. In all other cases, the eigenvectors are the solutions to certain delayed ODE's and of unknown  explicit form. Finally, it would also be interesting to know if the largest eigenvalue of this truncated Volterra operator is computable in the case $\theta = (a/(a+b), 1).$
\end{Rem}

\subsection*{Acknowledgments} 
Part of this work has been written during a stay at the Technical
University of Berlin of the fourth author, who would like to thank
Jean-Dominique Deuschel for the very good working conditions.

\bibliographystyle{siam}

\end{document}